\documentclass[reqno,
dvipdfmx,  
11pt]{amsart}

\setlength{\textheight}{8.7in}\setlength{\textwidth}{6.5in}\setlength{\oddsidemargin}{0cm}\setlength{\evensidemargin}{0cm}\setlength{\topmargin}{0mm}

\usepackage{
amssymb,
amsmath,
amsthm,
eucal,
empheq,
cases,
dsfont,
multicol,
mathrsfs,
tikz,
graphicx,
hyperref,
}
\usepackage{lipsum}
\makeatletter
\renewcommand*{\eqref}[1]{%
\hyperref[{#1}]{\textup{\tagform@{\!\!\ref*{#1}}}}%
}\makeatother 

\makeatletter
 
  \@addtoreset{equation}{section}
 \makeatother
\theoremstyle{plain}
\newtheorem{theorem}{Theorem}[section]
\newtheorem{lemma}[theorem]{Lemma}
\newtheorem{proposition}[theorem]{Proposition}
\newtheorem{corollary}[theorem]{Corollary}
\theoremstyle{definition}

\newtheorem{remark}[theorem]{Remark}
\newtheorem{example}[theorem]{Example}
\newtheorem{assumption}{Assumption}



\newcommand{\CAL}[1]{{\mathcal #1}}
\newcommand{\SCR}[1]{{\mathscr #1}}
\newcommand{\J}[1]{\left\langle #1 \right\rangle}

\newcommand{\lr}[1]{{\lceil #1 \rceil}}


\newcommand{\bignorm}[1]{{\left\|#1\right\|}}
\newcommand{\norm}[1]{{\|#1\|}}

\def\supp{\mathop{\mathrm{supp}}\nolimits}

\def\Re{\mathop{\mathrm{Re}}\nolimits}
\def\Im{\mathop{\mathrm{Im}}\nolimits}
\def\loc{\mathop{\mathrm{loc}}\nolimits}

\def\R{{\mathbb{R}}}
\def\Z{{\mathbb{Z}}}
\def\N{{\mathbb{N}}}
\def\C{{\mathbb{C}}}

\def\F{{\mathscr{F}}}

\def\<{{\langle}}
\def\>{{\rangle}}

\def\ep{{\varepsilon}}
\def\ds{\displaystyle}

\title[NLS with long-range potentials]{Modified scattering for nonlinear Schr\"odinger equations with long-range potentials}

\author{Masaki Kawamoto}
\address[M. Kawamoto]{Research Institute for Interdisciplinary Science, Okayama University, 3-1-1, Tsushimanaka, Kita-ku, Okayama City, Okayama, 700-8530, Japan}
\email{kawamoto.masaki@okayama-u.ac.jp}
\author{Haruya Mizutani}
\address[H. Mizutani]{Department of Mathematics, Graduate School of Science, Osaka University, Toyonaka, Osaka 560-0043, Japan}
\email{haruya@math.sci.osaka-u.ac.jp}
\begin{document}

\begin{abstract}
We study the final state problem for the nonlinear Schr\"{o}dinger equation with a critical long-range nonlinearity and a long-range linear potential. Given a prescribed asymptotic profile which is different from the free evolution, we construct a unique global solution scattering to the profile. In particular, the existence of the modified wave operators is obtained for sufficiently localized small scattering data. The class of potential includes a repulsive long-range potential with a short-range perturbation, especially the positive Coulomb potential in two and three space dimensions. The asymptotic profile is constructed by combining Yafaev's type linear modifier \cite{Yafaev} associated  with the long-range part of the potential and the nonlinear modifier introduced by Ozawa \cite{Ozawa1991}. Finally, we also show that one can replace Yafaev's type modifier by Dollard's type modifier under a slightly stronger decay assumption on the long-range potential. This is the first positive result on the modified scattering for the nonlinear Schr\"{o}dinger equation in the case when  both of the nonlinear term and the linear potential are of long-range type. 
\end{abstract}

\maketitle


\section{Introduction}
We consider the scattering problem for the nonlinear Schr\"odinger equation (NLS) on $\R^n$ with a linear potential of the form
\begin{align}
\label{NLS1}
i \partial _t u=Hu+F(u),\quad t\in \R,\quad x\in \R^n,
\end{align}
where $n=1,2,3$, $u=u(t,x)$ is complex-valued, 
$$
F(u)=\nu |u|^{\frac 2n} u,\quad \nu\in \R,
$$
and $H$ is the Schr\"odinger operator with a real-valued potential $V(x)$: 
$$
H=H_0+V,\quad H_0=-\frac12\Delta=-\frac12\sum_{j=1}^n\frac{\partial^2}{\partial x_j ^2}. 
$$
The purpose of the paper is to study the asymptotic behavior of solutions to the final state problem associated with \eqref{NLS1}, namely we construct a global solution of \eqref{NLS1} scattering to a prescribed asymptotic profile $u_{\mathrm{p}}(t)$ with a given scattering datum $u_+$ as $t\to \infty$: 
$$
\|u(t)-u_{\mathrm{p}}(t)\|_{L^2(\R^n)}\to 0,\quad t\to \infty. 
$$
The nonlinearity $|u|^{\frac 2n} u$ is known to be critical in the sense that, at least when $V\equiv0$, $u(t)$ does not scatter to a free solution $e^{-itH_0}u_+$, but the corresponding asymptotic profile is given by 
\begin{align}
\label{modified_profile_1}
(it)^{-n/2} e^{\frac{i|x|^2}{2t}}\widehat{u_+}\left(x/t\right)e^{-i \nu |\widehat{u_+} (x/t)|^{2/n} \log t},
\end{align}
which has an additional phase correction term $e^{-i \nu |\widehat{u_+} (x/t)|^{2/n} \log t}$ compared with the one for $e^{-itH_0}u_+$. Such a phenomenon is called the {\it long-range scattering}, or more recently  {\it modified scattering}. It is well known that the modified scattering also occurs for the linear case ($\nu\equiv0$) when $V$ decays at infinity sufficiently slowly, say $V(x)=O(|x|^{-\rho})$ as $|x|\to \infty$ with some $0<\rho\le1$. The modified scattering has been extensively studied for both of the linear Schr\"odinger equation with long-range potentials ($\nu=0$) and the standard NLS \eqref{NLS1} without linear potentials (see Section \ref{background} below). 

In the present paper, we focus on the case when both of the linear potential $V$ and the nonlinear term are of long-range type. As a typical example, our class of potentials particularly includes the repulsive inverse power potential $$V(x)=Z|x|^{-\rho}$$ with $n=2,3$, $n/4<\rho\le1$ and $Z>0$, especially the repulsive Coulomb potential $V(x)=Z|x|^{-1}$ (see Assumption \ref{assumption_A} and Example \ref{example_V} below for the precise assumption on $V$). 

To the best of our knowledge, this is the first result on the modified scattering for such a combined case. In particular, the main novelty of the paper is determining the relevant asymptotic profile $u_{\mathrm{p}}(t)$ for the scattering theory associated with \eqref{NLS1}, which turns out to be depend on both the long-range part of $V$ and the nonlinear term $F(u)$. We hope that the argument in the paper, especially the construction of the profile $u_{\mathrm{p}}(t)$, could be also applied to other problems on the long-range scattering theory such as the asymptotic completeness of the Cauchy problem for \eqref{NLS1} or the scattering theory for the nonlinear Hartree equations with long-range potentials, and so on. 

\subsection{Main result}
In order to state the main result, we need to introduce several notation and  assumptions. Recall first that a pair $(q,r) $ is said to be {\it admissible} if 
\begin{align}
\label{admissible}
2\le q,r\le\infty,\quad\frac2q+\frac nr=\frac n2,\quad (n,q,r)\neq(2,2,\infty).
\end{align}

\begin{assumption}
\label{assumption_A}
$V$ is a real-valued function satisfying the following two conditions (A1) and (A2): 
\begin{itemize}
\item[(A1)] $V$ is decomposed into three parts as $V=V^{\mathrm{S}}+V^{\mathrm{L}}+V^{\mathrm{sing}}$ with real-valued functions $V^{\mathrm{S}}, V^{\mathrm{L}}, V^{\mathrm{sing}}$ on $\R^n$ satisfying the following properties: 
\begin{itemize}
\item[$\bullet$] Short-range part: $V^{\mathrm{S}}$ satisfies $V^{\mathrm{S}}\in L^\infty(\R^n)$ and there exists $\rho_{\mathrm{S}}>1+\frac n4$ such that  
\begin{align}
\label{assumption_A_1}
|V^{\mathrm{S}}(x)|&\lesssim \<x\>^{-\rho_{\mathrm{S}}},\quad x\in \R^n.
\end{align}
\item[$\bullet$]  Long-range part: $V^{\mathrm{L}}\in C^3(\R^n)$ and there exists $\frac n4<\rho_{\mathrm{L}}\le1 +\frac n4$ such that for any $\alpha\in \Z_+^n=(\N\cup\{0\})^n$ with $|\alpha|\le3$ there exists $C_\alpha>0$ such that
\begin{align}
\label{assumption_A_3}
|\partial_x^\alpha V^{\mathrm{L}}(x)|\le C_\alpha \<x\>^{-\rho_{\mathrm{L}}-|\alpha|},\quad x\in \R^n.
\end{align}
\item[$\bullet$]  Singular part: $V^{\mathrm{sing}}$  is relatively $H_0$-form compact, namely $|V^{\mathrm{sing}}|^{\frac12}(H_0+1)^{-\frac12}$ is a compact operator on $L^2(\R^n)$. Moreover, $V^{\mathrm{sing}}$ is compactly supported. 
\end{itemize}
\item[(A2)] The unitary group $e^{-itH}$ generated by $H=H_0+V$ satisfies the Strichartz  estimates:\begin{align}
\label{strichartz_1}
\left\|e^{-itH}f\right\|_{L^q(\R;L^r(\R^n))}&\lesssim \|f\|_{L^2(\R^n)},\\
\label{strichartz_2}
\left\|\int_0^t e^{-i(t-s)H}F(s)ds\right\|_{L^q(\R;L^r(\R^n))}&\lesssim \|F\|_{L^{\tilde q'}(\R;L^{\tilde r'}(\R^n))},
\end{align}
for any admissible pairs $(q,r)$ and $(\tilde q,\tilde r)$.
\end{itemize}
\end{assumption}
Under (A1), $V$ is infinitesimally $H_0$-form bounded: for any $\ep>0$ there exists $C_\ep>0$ such that 
$$
\int_{\R^n} |V||f|^2dx\le \ep\int_{\R^n} |\nabla f|^2dx+C_\ep\int_{\R^n} |f|^2dx,\quad f\in H^1(\R^n).
$$
Hence, by the KLMN theorem (see \cite[Theorem X.17]{ReSi}), $H=H_0+V$ generates a unique self-adjoint operator on $L^2(\R^n)$ with form domain $H^1(\R^n)$ for which we use the same symbol $H$. 

\begin{remark}
\label{remark_1} ~~ \\
(1) Thanks to the Strichartz estimate \eqref{strichartz_1}, $H$ is purely absolutely continuous and $\sigma(H)=\sigma_{\mathrm{ac}}(H)=[0,\infty)$. In particular, $H$ has neither eigenvalues nor singular continuous spectrum. \\
(2) The conditions $\rho_{\mathrm{S}}>1+\frac n4$ and $\rho_{\mathrm{L}}>\frac n4$ are due to a technical reason in our method and the optimal assumptions should be $\rho_{\mathrm{S}}>1$ and $\rho_{\mathrm{L}}>0$. However, we do not know at the moment how to remove the additional factor $\frac n4$. \\
(3) Using \eqref{strichartz_1} and \eqref{strichartz_2}, one can also obtain
\begin{align}
\label{strichartz_3}
\left\|\int_t^\infty e^{-i(t-s)H}F(s)ds\right\|_{L^q([T,\infty);L^r(\R^n))}&\lesssim \|F\|_{L^{\tilde q'}([T,\infty);L^{\tilde r'}(\R^n))}
\end{align}
which will play a crucial role in the paper. 
\end{remark}

Here we record some concrete examples of $V$: 
\begin{example}
\label{example_V}
$V$ satisfies Assumption \ref{assumption_A} if {\it one} of the following (E1)--(E4) hold: 
\begin{enumerate}
\item[(E1)] {\it Very short-range potential}. Let $n=1,2,3$ and $V=V^{\mathrm{S}}+V^{\mathrm{sing}}$, where 
\begin{itemize}
\item $V^{\mathrm{S}}\in L^\infty(\R^n)$ satisfies \eqref{assumption_A_1} with some $\rho_{\mathrm{S}}>2$. 
\item $V^{\mathrm{sing}}\equiv0$ if $n=1,2$ and $V^{\mathrm{sing}}\in L^{\frac 32}_{\mathrm{compact}}(\R^3)$ if $n=3$. 
\end{itemize}
Moreover, the negative part $V_-=\max\{0,-V\}$ of $V$ satisfies $V_-\equiv0$ if $n=1,2$ and $\|V_-\|_{L^{\frac 32}(\R^3)}<3\cdot 2^{-\frac 43}\pi^{\frac 43}$ if $n=3$. 
\item[(E2)] {\it Smooth slowly decaying potential}. Let $n=2,3$, $V\in C^\infty(\R^n)$ and there exists $\frac n4<\rho<2$ such that the following properties (H1)--(H3) hold: 
\begin{itemize}
\item[(H1)] For all $\alpha\in \Z_+^n$, there exists $C_\alpha>0$ such that
$$
|\partial_x^\alpha V(x)|\le C_\alpha \J{x} ^{-\rho-|\alpha|},\quad x\in\R^n.$$
\item[{(H2)}] There exists $C_1>0$ such that $$V(x)\ge C_1\J{x}^{-\rho},\quad x\in\R^n.$$  
\item[{(H3)}] There exists $R_0,C_2>0$ such that $$-x\cdot\nabla V(x)\ge C_2 \J{x} ^{-\rho},\quad |x|\ge R_0.$$ 
\end{itemize}
\item[(E3)] {\it Inverse power potential}. Let $n=2,3$ and $V=Z|x|^{-\rho}+\ep W$, where $\frac n4<\rho<2$ and $Z>0$. Moreover, $\ep>0$ is a sufficiently small constant and $W\in C^\infty(\R^n)$ satisfies 
$$
|\partial_x^\alpha W(x)|\le C_\alpha \<x\>^{-1-\rho-|\alpha|}.
$$
\item[(E4)] {\it Small perturbation of (E1)--(E3)}. Let $n=3$ and $V=V_1+V_2$, where $V_1$ satisfies one of (E1)--(E3) and $\norm{V_2}_{L^{3/2}(\R^3)}$ is small enough. 
\end{enumerate}
\end{example}
In particular, if  $n=2,3$, $Z>0$ and $\rho\in (n/4,2)$ then 
$$V(x)=Z\<x\>^{-\rho},\quad V(x)=Z|x|^{-\rho},$$
satisfy Assumption \ref{assumption_A}. Note that the Strichartz  estimates \eqref{strichartz_1} and \eqref{strichartz_2} were proved by \cite{RoSc,MMT,Goldberg,Beceanu} for the case (E1), and by \cite{Mizutani_JFA,Taira} for the cases (E2) and (E3), respectively. To obtain \eqref{strichartz_1} and \eqref{strichartz_2} for the case (E4), we use  Duhamel's formulas 
$$
U_H=U_{H_1}-i\Lambda_{H_1}V_2U_H,\quad \Lambda_{H}=\Lambda_{H_1}-i\Lambda_{H_1}V_2\Lambda_H
$$
where $H_1=H_0+V_1$ and we use the following notation for short: 
$$
U_H=e^{-itH},\quad \Lambda_H F=\int_0^t e^{-i(t-s)H}F(s)ds.
$$
The endpoint Strichartz estimates for $U_{H_1}$ and $\Lambda_{H_1}$ then yield
\begin{align*}
\norm{U_Hf}_{L^2L^6}&\lesssim \norm{f}_{L^2}+\norm{V_2}_{L^{3/2}}\norm{U_Hf}_{L^2L^6},\\
\norm{\Lambda_H F}_{L^2L^6}&\lesssim \norm{\Lambda_{H_1} F}_{L^2L^{6/5}}+\norm{V_2}_{L^{3/2}}\norm{\Lambda_H F}_{L^2L^6},
\end{align*}
and \eqref{strichartz_1} and \eqref{strichartz_2} thus follow if $\norm{V_2}_{L^{3/2}(\R^3)}$ is small enough.

\begin{remark}
The Strichartz  estimates are still open for the case with slowly decaying potentials in one space dimension $n=1$. This is the reason to exclude the case $n=1$ from (E2) and (E3). 
\end{remark}

On the scattering data, we impose the following. 

\begin{assumption}
\label{assumption_B}
$ \<x\>^\gamma u_+\in L^2(\R^n)$ for some $1\le \gamma\le2$ if $n=1$ and $\frac n2<\gamma<1+\frac2n$ if $n=2,3$, and there exists $c_0>0$ such that $\supp \widehat{u_+}\subset \{|\xi|\ge c_0\}$. 
\end{assumption}

Given a scattering datum $u_+$, we define the asymptotic profile $u_{\mathrm{p}}$ as follows.  We fix a cut-off function $\chi \in C^{\infty}_0 (\R^n)$ such that $0\le \chi\le1$, $\chi(x)=1$ for $|x|\le c_0/4$ and $\chi(x)=0$ for $|x|\ge c_0/3$. Using the long-range part $V^{\mathrm{L}}$ of $V$, we define a time dependent potential $V_{T_1}(t,x)$ 
by 
\begin{align}
\label{K17}
V_{T_1} (t,x) & =  V^{\mathrm{L}}(x)   \left\{1-  \chi \left(\frac{2x}{t+T_1}\right)\right\}, 
\end{align} 
where $T_1\ge1$. It is worth noting that $V_{T_1}\equiv V^{\mathrm{L}} $ for $t\ge 0$ and $|x|\ge c_0 (t+T_1)/6$. Moreover, 
\begin{align}
\label{V_ell}
|\partial_x^\alpha V_{T_1}(t,x)|\le C_\alpha \<t\>^{-\rho_{\mathrm{L}}-|\alpha|},\quad t\geq 0,\quad x\in \R^n,
\end{align}
where $C_\alpha$ is independent of $T_1$. Using this decaying condition, we will show in Section 3 that, for sufficiently large $T_1\ge1$, there exists a solution $\Psi\in C^\infty([1,\infty)\times \R^n;\R)$ to the following Hamilton-Jacobi equation 
\begin{align}
\label{K8}
- \partial _t \Psi_{} (t,x) = \frac12 |\nabla \Psi_{} (t,x)|^2  + V_{T_1} (t,x).
\end{align}
Then the asymptotic profile $u_{\mathrm{p}}$ is defined by
\begin{align}
\label{asymptotic_profile}
u_{\mathrm{p}}(t,x) := (it)^{-n/2}e^{i\Psi_{}(t,x)}\widehat{u_+}\left(x/t\right)e^{-i \nu |\widehat{u_+} (x/t)|^{2/n} \log t},
\end{align}
which also can be written as
$$
u_{\mathrm{p}}(t,x)=e^{i\Psi(t,x)}\CAL{D}(t)W(t,x),\quad W(t,x):=e^{-i\nu|\widehat{u_+}(x)|^{2/n}\log t}\widehat{u_+}(x)
$$
Here $\widehat f(\xi)=\F f(\xi)$ denotes the Fourier transform of $f$ and $\CAL{D}(t)f(x):=(it)^{-n/2}f(x/t)$. 

Now we are ready to state the main result: 
\begin{theorem}[Modified scattering]
\label{theorem_1}
Let $1\le n\le 3$, $\gamma$ be as in Assumption \ref{assumption_B} and
\begin{align}
\label{b}
\frac n4< b < \min\left\{ \frac \gamma 2,\ \rho_{\mathrm{L}},\ \rho_{\mathrm{S}} -1,1  \right\}.
\end{align}
Let $V$ and $u_+$ satisfy Assumptions \ref{assumption_A} and \ref{assumption_B}, respectively, and $\norm{\widehat {u_+}}_{L^\infty}$ be small enough. Then there exists a unique solution $u\in C(\R;L^2(\R^n))$ to \eqref{NLS1} satisfying, for any admissible pair $(q,r)$, 
\begin{align}
\label{theorem_1_1}
\norm{u(t) - u_{\mathrm{p}} (t)}_{L^2}+\norm{u-u_\mathrm{p}}_{L^q([t,\infty);L^r(\R^n))}\lesssim t^{-b},\quad t\to +\infty. 
\end{align}
\end{theorem}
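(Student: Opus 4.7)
The plan is to recast the problem as a fixed point for $v:=u-u_{\mathrm{p}}$. Setting $R(t):=(i\partial_t-H)u_{\mathrm{p}}-F(u_{\mathrm{p}})$, one checks that $u=u_{\mathrm{p}}+v$ solves \eqref{NLS1} with $\|v(t)\|_{L^2}\to0$ if and only if
\begin{equation*}
v(t)=i\int_t^{\infty}e^{-i(t-s)H}\bigl\{[F(u_{\mathrm{p}}+v)-F(u_{\mathrm{p}})](s)-R(s)\bigr\}\,ds=:\Phi(v)(t).
\end{equation*}
I would then show that $\Phi$ is a contraction on the weighted Strichartz ball
\begin{equation*}
B_{T,M}:=\Bigl\{v\,:\,\sup_{t\ge T}t^{b}\bigl(\|v(t)\|_{L^2}+\|v\|_{L^q([t,\infty);L^r)}\bigr)\le M\text{ for every admissible }(q,r)\Bigr\}
\end{equation*}
with $T$ large and $M$ small; the fixed point yields a solution satisfying \eqref{theorem_1_1}, and uniqueness in this class follows by applying the same difference estimate to two such solutions.

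The central work is the estimate of $R$. Writing $u_{\mathrm{p}}=Ae^{i\phi}$ with $A(t,x)=(it)^{-n/2}\widehat{u_+}(x/t)$ and $\phi=\Psi-\nu|\widehat{u_+}(x/t)|^{2/n}\log t$, a direct computation gives
\begin{equation*}
R=\Bigl[A\bigl(-\partial_t\phi-\tfrac12|\nabla\phi|^2-V-\nu|A|^{2/n}\bigr)+i\bigl(\partial_t A+\nabla A\cdot\nabla\phi+\tfrac12 A\Delta\phi\bigr)+\tfrac12\Delta A\Bigr]e^{i\phi}.
\end{equation*}
Since $\supp A\subset\{|x|\ge c_0 t\}$ lies, for $t\ge T_1$, inside $\{\chi(2x/(t+T_1))=0\}$, we have $V_{T_1}=V^{\mathrm{L}}$ there; using \eqref{K8} and the identity $\nu|A|^{2/n}=\nu t^{-1}|\widehat{u_+}(x/t)|^{2/n}$, the first bracket reduces to $V_{T_1}-V=-V^{\mathrm{S}}-V^{\mathrm{sing}}$ plus remainders of order $(\log t)t^{-1}|\nabla\Psi-x/t|$ and $(\log t)^2 t^{-2}$. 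The imaginary bracket is the Ozawa-type transport term; using the scaling identity $\partial_tA+t^{-1}x\cdot\nabla A=-\tfrac{n}{2t}A$ together with the WKB estimates $\partial_x^\alpha(\Psi-|x|^2/(2t))=O(t^{1-\rho_{\mathrm{L}}-|\alpha|})$ from Section 3, it reduces to remainders of the same type. Finally $\tfrac12\Delta A$ is a dispersive error of size $t^{-2}$ carrying two $\xi$-derivatives on $\widehat{u_+}$. Combining these bounds with $\langle x\rangle^\gamma u_+\in L^2$ and the decay of $V^{\mathrm{S}},V^{\mathrm{sing}}$ on $|x|\gtrsim t$, one obtains, for a suitable admissible pair $(\tilde q,\tilde r)$,
\begin{equation*}
\|R\|_{L^{\tilde q'}([t,\infty);L^{\tilde r'})}\lesssim t^{-b},
\end{equation*}
after which the inhomogeneous Strichartz estimate \eqref{strichartz_3} gives the desired $t^{-b}$ decay of the linear source term in every admissible Strichartz norm.

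The nonlinear piece is handled by the pointwise bound $|F(u_{\mathrm{p}}+v)-F(u_{\mathrm{p}})|\lesssim(|u_{\mathrm{p}}|^{2/n}+|v|^{2/n})|v|$ together with the dispersive estimate $\|u_{\mathrm{p}}(t)\|_{L^\infty}\lesssim t^{-n/2}\|\widehat{u_+}\|_{L^\infty}$. A standard Strichartz--H\"older argument with an admissible pair adapted to the scaling of $F$ yields
\begin{equation*}
\|F(u_{\mathrm{p}}+v)-F(u_{\mathrm{p}})\|_{L^{\tilde q'}([t,\infty);L^{\tilde r'})}\lesssim\bigl(\|\widehat{u_+}\|_{L^\infty}^{2/n}+M^{2/n}\bigr)M\,t^{-b},
\end{equation*}
so smallness of $\|\widehat{u_+}\|_{L^\infty}$ and $M$ together with largeness of $T$ closes the contraction.

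The main obstacle is the sharp $R$-estimate, and more precisely its interaction with the admissible range \eqref{b}. The lower bound $b>n/4$ emerges from the dispersive remainder $\Delta A$, which carries a factor $t^{-2}$ together with two $\xi$-derivatives on $\widehat{u_+}$ and must be controlled through the weight $\langle x\rangle^\gamma u_+\in L^2$ (producing the constraint $b<\gamma/2$). The remaining upper bounds $b<\rho_{\mathrm{S}}-1$, $b<\rho_{\mathrm{L}}$, $b<1$ trace respectively to the short-range piece $V^{\mathrm{S}}A$ on $|x|\gtrsim t$, the WKB error $\nabla\Psi-x/t$ of size $t^{-\rho_{\mathrm{L}}}$, and the Ozawa nonlinear phase correction. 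Balancing all these contributions simultaneously through \eqref{strichartz_3}, while keeping $\rho_{\mathrm{S}}$ and $\rho_{\mathrm{L}}$ in the endpoint range, is the delicate technical step of the proof.
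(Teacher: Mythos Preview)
Your overall fixed-point scheme and your treatment of the nonlinear difference $F(u_{\mathrm p}+v)-F(u_{\mathrm p})$ are the same as the paper's, but there is a genuine gap in your estimate of the source $R$. The direct WKB expansion $u_{\mathrm p}=Ae^{i\phi}$ produces the term $\tfrac12\Delta A$ and, through $|\nabla\phi|^2$ and $\Delta\phi$, also second derivatives of the nonlinear phase $|\widehat{u_+}(x/t)|^{2/n}$. To place $\Delta A$ in any dual Strichartz space $L^{\tilde q'}L^{\tilde r'}$ you need $\widehat{u_+}\in W^{2,\tilde r'}$, hence essentially $\widehat{u_+}\in H^2$; but Assumption~\ref{assumption_B} only gives $\widehat{u_+}\in H^\gamma$ with $\gamma<1+\tfrac{2}{n}<2$ for $n=2,3$. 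Likewise, $|\widehat{u_+}|^{2/n}$ is at best H\"older of order $\tfrac{2}{n}$ near its zeros when $n\ge2$, so its second derivatives are not controlled. Your claim that $\Delta A$ ``produces the constraint $b<\gamma/2$'' cannot be made to work by interpolation here: the $t^{-2}$ factor is tied to two \emph{integer} derivatives of $\widehat{u_+}$, and there is no obvious fractional substitute at this stage of the computation. (For $n=1$, where $\gamma=2$ is allowed, your outline would essentially go through.)

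The paper avoids exactly this obstruction by \emph{not} applying $i\partial_t-H$ directly to $\CAL{M}_\Psi(t)\CAL{D}(t)W$. Instead it uses the MDFM factorization $e^{-itH_0}=\CAL{M}(t)\CAL{D}(t)\F\CAL{M}(t)$ to write $\CAL{M}_\Psi(t)\CAL{D}(t)=\CAL{U}_1(t)+\CAL{U}_2(t)+\CAL{U}_3(t)$ (Section~\ref{subsection_integral_equation}). The pieces $\CAL{U}_1,\CAL{U}_2$ are treated by the \emph{fractional} bound $\|(\CAL{M}(t)-1)f\|_{L^2}\lesssim t^{-\gamma/2}\||x|^\gamma f\|_{L^2}$ in Lemma~\ref{lemma_4_2}; this is where $b<\gamma/2$ actually comes from. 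For $\CAL{U}_3$, the commutator $(i\partial_t-H)$ is applied to $\CAL{M}_\Psi(t)\CAL{M}(-t)e^{-itH_0}\F^{-1}$ rather than to $\CAL{M}_\Psi(t)\CAL{D}(t)$; the key identity $\nabla\,\CAL{D}(t)\F\CAL{M}(t)\F^{-1}=t^{-1}\CAL{D}(t)\F\CAL{M}(t)\F^{-1}\nabla$ then converts what would have been a second spatial derivative into a single derivative with an extra $t^{-1}$, so only $\|W(t)\|_{H^1}$ (controlled by Lemma~\ref{lemma_nonlinear} for $\gamma>\tfrac n2$) is needed; see Lemma~\ref{lemma_4_4}. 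Finally, the lower bound $b>\tfrac n4$ in \eqref{b} does not arise from $\Delta A$ but from the nonlinear contraction estimate (the factor $T^{-2(b-n/4)/n}$ in Lemma~\ref{lemma_5_1}). In short, your strategy is correct in spirit, but the direct expansion of $R$ fails for $n=2,3$ at the available regularity; the Dollard decomposition is the missing ingredient.
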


\begin{remark}
The analogous result for the negative time  can be also obtained by the same proof. 
\end{remark}

As a direct consequence of Theorem \ref{theorem_1}, we also obtain the existence of modified wave operator: 
\begin{corollary}[Modified wave operator]
\label{corollary_2}
Under the same conditions in Theorem \ref{theorem_1}, there exists $\ep_0>0$ such that, for any $0<\ep\le \ep_0$, we have the modified wave operator
\begin{align*}
W^{+}_{\Psi}\, :\, u_+ \mapsto u(0),
\end{align*}
which is defined from $\SCR{F} H^{ \gamma} \cap \{ u \in L^2 ({\R}^n) \, | \,  \supp \widehat{u}\subset \{|\xi|\ge c_0\}\ \text{and}\ \norm{\widehat u}_{L^\infty}\le \ep \}$ into $L^2({\R }^n)$.
\end{corollary}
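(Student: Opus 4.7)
My plan is to set up a fixed-point argument for the correction $v := u - u_{\mathrm{p}}$. Subtracting the equation for $u_{\mathrm{p}}$ from \eqref{NLS1} yields
\begin{equation*}
i\partial_t v - Hv = \bigl(F(u_{\mathrm{p}}+v)-F(u_{\mathrm{p}})\bigr) - E(t,x), \qquad E := i\partial_t u_{\mathrm{p}} - H u_{\mathrm{p}} - F(u_{\mathrm{p}}),
\end{equation*}
and the scattering requirement $\|v(t)\|_{L^2}\to 0$ as $t\to\infty$ converts this into the Duhamel-type fixed-point equation
\begin{equation*}
v(t) = \Phi(v)(t) := i\int_t^\infty e^{-i(t-s)H}\bigl[F(u_{\mathrm{p}}+v)-F(u_{\mathrm{p}})-E\bigr](s)\,ds.
\end{equation*}
I would solve this on a closed ball of radius $CT^{-b}$ in the Strichartz space $X_T := L^\infty([T,\infty);L^2)\cap\bigcap_{(q,r)}L^q([T,\infty);L^r(\R^n))$ (intersection over all admissible pairs), for $T$ sufficiently large. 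The inhomogeneous Strichartz estimate \eqref{strichartz_3} is exactly tailored to this setup.

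The heart of the proof is the source estimate $\|E\|_{L^{\tilde q'}([T,\infty);L^{\tilde r'})}\lesssim T^{-b}$ for some admissible $(\tilde q,\tilde r)$. Starting from the factorization $u_{\mathrm{p}}=e^{i\Psi}\CAL{D}(t)W$ with $W(t,x)=e^{-i\nu|\widehat{u_+}(x)|^{2/n}\log t}\widehat{u_+}(x)$, direct differentiation combined with the Hamilton-Jacobi identity \eqref{K8} is designed precisely to kill the leading-order obstructions: the $-\partial_t\Psi$-contribution cancels $\tfrac12|\nabla\Psi|^2 u_{\mathrm p}+V_{T_1}u_{\mathrm p}$, and the time derivative of the Ozawa phase cancels $F(u_{\mathrm p})$. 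What survives is a sum of (a) $(V-V_{T_1})u_{\mathrm p}$, which vanishes on $\supp u_{\mathrm p}\subset\{|x|\ge c_0 t\}$ for $t\gg1$ by Assumption \ref{assumption_B} and the definition \eqref{K17}; (b) the short-range and singular contributions $(V^{\mathrm{S}}+V^{\mathrm{sing}})u_{\mathrm p}$, which decay pointwise like $\<t\>^{-\rho_{\mathrm S}}$ on the velocity support (and $V^{\mathrm{sing}}u_{\mathrm p}\equiv 0$ for large $t$ by compact support); (c) a WKB residual reflecting the discrepancy between $e^{i\Psi}\CAL{D}(t)$ and the true propagator $e^{-itH}$, controlled via the regularity $\<x\>^\gamma u_+\in L^2$ together with the Hessian bounds on $\Psi$ constructed in Section 3; and (d) lower-order cross terms of the form $\nabla\Psi\cdot\nabla W$ and $\Delta W$ coming from $H_0(e^{i\Psi}\CAL D(t)W)$. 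The range \eqref{b} of $b$ is calibrated precisely so that each of these pieces integrates in time to the desired $T^{-b}$ bound.

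With the source estimate in hand, the contraction step is routine: using $|F(u_{\mathrm p}+v)-F(u_{\mathrm p})|\lesssim |v|(|u_{\mathrm p}|^{2/n}+|v|^{2/n})$, the pointwise decay $\|u_{\mathrm p}(t)\|_{L^\infty}\lesssim t^{-n/2}\|\widehat{u_+}\|_{L^\infty}$, H\"older in space-time, and \eqref{strichartz_3}, one verifies that $\Phi$ is a $\tfrac12$-contraction on the $CT^{-b}$-ball in $X_T$ once $\|\widehat{u_+}\|_{L^\infty}$ is small and $T$ is large. The fixed point produces $v\in C([T,\infty);L^2)$ satisfying \eqref{theorem_1_1}, and one extends backward to all of $\R$ using the $L^2$-well-posedness of \eqref{NLS1}, itself a consequence of \eqref{strichartz_1}--\eqref{strichartz_2} and the $L^2$-subcritical nature of $F$. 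Uniqueness among solutions obeying \eqref{theorem_1_1} follows by applying \eqref{strichartz_3} to the difference of two such solutions on $[T,\infty)$ and absorbing the small nonlinear contribution. The principal obstacle I anticipate is piece (c) above: obtaining a quantitative WKB comparison between $e^{-itH_0}u_+$ and its leading-order asymptotic $\CAL{D}(t)\widehat{u_+}$ \emph{after} conjugation by the Yafaev-type modifier $e^{i\Psi}$, under only the Sobolev-type regularity $\<x\>^\gamma u_+\in L^2$, is exactly what forces the technical thresholds $b>n/4$, $\rho_{\mathrm L}>n/4$, and $\rho_{\mathrm S}>1+\tfrac{n}{4}$ appearing in our assumptions.
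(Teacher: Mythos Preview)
Your proposal is essentially a proof of Theorem~\ref{theorem_1} (the corollary is then immediate: once the global solution exists and is unique, $u_+\mapsto u(0)$ is well-defined). The substantive question is whether your fixed-point argument for the theorem goes through, and here there is a genuine gap in your piece (d).

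Expanding $H_0 u_{\mathrm p}=-\tfrac12\Delta\bigl(e^{i\Psi}\CAL D(t)W\bigr)$ produces, beyond the terms that cancel via \eqref{K8}, a residual $-\tfrac12 e^{i\Psi}\,t^{-2}\CAL D(t)\Delta W$. To put $E$ into any dual Strichartz space $L^{\tilde q'}([T,\infty);L^{\tilde r'})$ you need $\Delta W(t,\cdot)\in L^{\tilde r'}$. But Lemma~\ref{lemma_nonlinear} only yields $W(t,\cdot)\in H^\gamma$, and the hypotheses force $\gamma<1+\tfrac2n<2$ for $n=2,3$; under this regularity there is no reason for $\Delta W$ to lie in \emph{any} Lebesgue space. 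The paper flags exactly this obstruction in Section~1.3: ``$\Delta\CAL M_\Psi(t)\CAL D(t)W$ can not be defined unless $W\in H^2(\R^n)$ at least, while to have $W\in H^2(\R^n)$ when $n=3$ is difficult.'' Your piece (c), the ``WKB residual,'' does not repair this: in your formulation $E$ is computed directly on $u_{\mathrm p}$, so there is no comparison with a true propagator into which the extra derivative could be absorbed.

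The paper's fix is to rewrite $\CAL D(t)=\CAL D(t)\F(1-\CAL M(t))\F^{-1}+\CAL M(-t)e^{-itH_0}\F^{-1}$ via the Dollard decomposition \eqref{MDFM}, then insert a spatial cutoff, obtaining the splitting $\CAL M_\Psi\CAL D=\CAL U_1+\CAL U_2+\CAL U_3$ of \eqref{integral_2}. On the main piece $\CAL U_3=(1-\chi_t)\CAL M_\Psi\CAL M(-t)e^{-itH_0}\F^{-1}$ the Laplacian in $H_0$ is swallowed by the free propagator; after the Hamilton--Jacobi cancellation, the commutator $e^{-itH}[i\partial_t,e^{itH}\CAL U_3(t)]$ reduces (Lemma~\ref{lemma_4_4}) to the \emph{first-order} operator $A_\Psi=(\nabla\Psi-x/t)\cdot\nabla+(\Delta\Psi-n/t)$ acting on $\CAL D(t)\F\CAL M(t)\F^{-1}W$, so that only one derivative of $W$ is needed. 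Without an analogous device to trade away the second spatial derivative on $W$, your argument does not close for $n=2,3$.
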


Several remarks on Theorem \ref{theorem_1} are in order. 
\begin{remark}
If $V^{\mathrm{L}}\equiv0$, then one can take $T_1=0$ and replace $\Psi$  by $\frac{|x|^2}{2t}$. Indeed, $\frac{|x|^2}{2t}$ is a solution to the free Hamilton-Jacobi equation, that is \eqref{K8} with $V_{T_1}\equiv0$. In this case, $u_{\mathrm{p}}(t)$ is reduced to \eqref{modified_profile_1} which, as mentioned above, is the well-known asymptotic profile for the NLS \eqref{NLS1} with $V\equiv0$ introduced by Ozawa \cite{Ozawa1991} for $n=1$ and Ginibre-Ozawa \cite{Ginibre_Ozawa1993} for $n=2,3$. If $\nu=0$ then
$$
u_{\mathrm{p}}(t,x)=(it)^{-n/2}e^{i\Psi_{}(t,x)}\widehat{u_+}(x/t),
$$
which is closely related with the profile employed by Yafaev \cite{Yafaev} and Derezin\'ski--G\'erard \cite{DeGe} in the linear long-range scattering theory. 
\end{remark}

\begin{remark}
As mentioned above, we assumed $\rho_{\mathrm{S}}>1+\frac n4$ for the short-range part $V^{\mathrm{S}}$ of $V$. Hence, if we consider for instance a smooth potential $V=\widetilde V^{\mathrm{S}}+V^{\mathrm{L}}$ with $V^{\mathrm{L}}$ as above  and 
$$
\widetilde V^{\mathrm{S}}(x)\gtrsim \<x\>^{-\rho},\quad -x\cdot\nabla \widetilde V^{\mathrm{S}}(x)\gtrsim \<x\>^{-\rho},\quad { \partial ^{\alpha}_x}  \widetilde V^{\mathrm{S}}(x)=O(\<x\>^{-\rho-|\alpha|}),\quad 1<\rho\le 1+\frac n4,
$$
then $u_\mathrm{p}(t)$ depends not only on $V^{\mathrm{L}}$ but also on $\widetilde V^{\mathrm{S}}$ as well, while this is not the case in the linear scattering theory where we can choose a profile depending only on $ V^{\mathrm{L}}$. This is because (due to a technical reason) we need a stronger condition \eqref{b} on the decay rate in $t$ of $\|u(t)-u_{\mathrm{p}}(t)\|_{L^2}$ than the linear case where $\|u(t)-u_{\mathrm{p}}(t)\|_{L^2}=o(1)$ is known to be sufficient. It might be possible to find a more precise  asymptotic profile which is completely independent of short-range potentials by considering the scattering theory for $u_{\mathrm{p}}(t)$. However, we do not pursue this issue here for simplicity. 
\end{remark}

In the linear scattering theory, it is well known that if $\rho_{\mathrm{L}} > 1/2$, one can choose the so-called {\em Dollard type modifier} as the asymptotic profile which is simpler than \eqref{asymptotic_profile}. As a final result in the paper, we shall show that this is also the case for \eqref{NLS1} whenever $\rho_{\mathrm{L}}>1/2+n/8$. Let 
\begin{align*}
Q(t,x) &:= \int_0^t V^{\mathrm{L}}(\tau x)  d \tau, \quad  \widetilde{V}(t, x) :=  \int_0^t \frac{1}{\tau}\left\{Q(\tau , x)+x\cdot (\nabla Q) (\tau, x)\right\}d \tau
\end{align*}
and 
\begin{align}\label{K27}
\Psi_{\mathrm{D}}(t,x):=\frac{|x|^2}{2t}-\widetilde V\left(t,\frac xt\right).
\end{align} 
We then define the Dollard type asymptotic profile $u_{\mathrm{D}}$ by
\begin{align*}
u_{\mathrm{D}} (t,x) := (it)^{-n/2}e^{i\Psi_{\mathrm{D}}(t,x)} \widehat{u_+} \left(x/t\right) e^{-i \nu |\widehat{u_+} (x/t)|^{2/n} \log t}.
\end{align*}
\begin{theorem}[Dollard type modification] 
\label{theorem_3}
Let $1\le n\le 3$ and $\gamma$ be as in Assumption \ref{assumption_B}. 
Suppose $\rho_{\mathrm{L}} > \frac{n}8 + \frac12$ and 
$
\frac n4< b < \min\left\{ \frac\gamma2,\ \rho_{\mathrm{L}}, \ 2 \rho_{\mathrm{L}} -1,\ \rho_{\mathrm{S}} -1  \right\}.
$
Let $V$ and $u_+$ satisfy Assumptions \ref{assumption_A} and \ref{assumption_B}, respectively, and $\norm{\widehat {u_+}}_{L^\infty}$ be small enough. Then there exists a unique solution $u\in C(\R;L^2(\R^n))$ to \eqref{NLS1} satisfying, for any admissible pair $(q,r)$, 
\begin{align*}
\norm{u(t) - u_{\mathrm{D}} (t)}_{L^2}+\norm{u-u_{\mathrm{D}}}_{L^q([t,\infty);L^r(\R^n))}\lesssim t^{-b},\quad t\to +\infty.
\end{align*}
\end{theorem}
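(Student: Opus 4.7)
\textbf{Proof plan for Theorem \ref{theorem_3}.}

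The strategy is to reduce to Theorem \ref{theorem_1} and then bound the discrepancy between the two profiles $u_\mathrm{p}$ and $u_\mathrm{D}$. The hypotheses of Theorem \ref{theorem_3} imply those of Theorem \ref{theorem_1}: indeed $n/4<b<2\rho_\mathrm{L}-1$ forces $\rho_\mathrm{L}>n/8+1/2>n/4$. Hence Theorem \ref{theorem_1} already produces a unique $u\in C(\R;L^2)$ satisfying $\|u(t)-u_\mathrm{p}(t)\|_{L^2}+\|u-u_\mathrm{p}\|_{L^q([t,\infty);L^r)}\lesssim t^{-b}$. By the triangle inequality the remaining task is
\begin{equation*}
\|u_\mathrm{p}(t)-u_\mathrm{D}(t)\|_{L^2}+\|u_\mathrm{p}-u_\mathrm{D}\|_{L^q([t,\infty);L^r)}\lesssim t^{-b}.
\end{equation*}
Since $u_\mathrm{p}$ and $u_\mathrm{D}$ share the same amplitude and differ only by the factor $e^{i(\Psi-\Psi_\mathrm{D})}$, the elementary bound $|e^{i\alpha}-e^{i\beta}|\le|\alpha-\beta|$ reduces everything to a pointwise estimate on the phases.

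The heart of the argument is to show that, uniformly on $|x/t|\ge c_0/2$ for $t\gg1$,
\begin{equation*}
|\Psi(t,x)-\Psi_\mathrm{D}(t,x)|\lesssim t^{1-2\rho_\mathrm{L}}.
\end{equation*}
A direct computation, combined with the crucial identity $\partial_t\widetilde V(t,y)=V^\mathrm{L}(ty)$ — which follows from $\frac{d}{d\sigma}[\sigma V^\mathrm{L}(\sigma y)]=V^\mathrm{L}(\sigma y)+\sigma y\cdot\nabla V^\mathrm{L}(\sigma y)$ applied inside the definitions of $Q$ and $\widetilde V$ — shows that $\Psi_\mathrm{D}$ satisfies
\begin{equation*}
-\partial_t\Psi_\mathrm{D}=\tfrac12|\nabla\Psi_\mathrm{D}|^2+V^\mathrm{L}(x)-\tfrac1{2t^2}|\nabla_y\widetilde V(t,x/t)|^2.
\end{equation*}
Using the decay assumption on $V^\mathrm{L}$, one estimates $|\nabla_y\widetilde V(t,y)|\lesssim t^{1-\rho_\mathrm{L}}$ uniformly on $|y|\ge c_0/2$, so the error term is $O(t^{-2\rho_\mathrm{L}})$. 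Because $V_{T_1}(t,x)=V^\mathrm{L}(x)$ on the same region, subtracting the Hamilton-Jacobi equations yields the transport equation
\begin{equation*}
-\partial_t(\Psi-\Psi_\mathrm{D})=\tfrac12(\nabla\Psi+\nabla\Psi_\mathrm{D})\cdot\nabla(\Psi-\Psi_\mathrm{D})+O(t^{-2\rho_\mathrm{L}}),
\end{equation*}
whose characteristics are close to the radial rays $x(s)\sim sp$ (since both $\nabla\Psi$ and $\nabla\Psi_\mathrm{D}$ equal $x/t$ to leading order). Integrating from $t$ to $+\infty$ along these characteristics, together with the boundary condition $\Psi-\Psi_\mathrm{D}\to0$ built into the asymptotic construction of $\Psi$ in Section 3, yields the claimed bound.

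Once the phase estimate is in hand, the change of variables $\xi=x/t$ gives
\begin{equation*}
\|u_\mathrm{p}(t)-u_\mathrm{D}(t)\|_{L^2}^2\lesssim\int_{|\xi|\ge c_0}|\Psi(t,t\xi)-\Psi_\mathrm{D}(t,t\xi)|^2|\widehat{u_+}(\xi)|^2d\xi\lesssim t^{2(1-2\rho_\mathrm{L})}\lesssim t^{-2b},
\end{equation*}
while for the Strichartz norm one has $\|u_\mathrm{p}(s)-u_\mathrm{D}(s)\|_{L^r}\lesssim s^{1-2\rho_\mathrm{L}-2/q}\|\widehat{u_+}\|_{L^r}$ via the admissibility relation $2/q+n/r=n/2$, and integration over $[t,\infty)$ produces $\|u_\mathrm{p}-u_\mathrm{D}\|_{L^qL^r}\lesssim t^{1-2\rho_\mathrm{L}}\lesssim t^{-b}$. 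The finiteness of $\|\widehat{u_+}\|_{L^r}$ for all admissible $r$ follows from Assumption \ref{assumption_B} via Sobolev embedding in the Fourier variable.

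\textbf{Main obstacle.} The most delicate step is the rigorous derivation of $|\Psi-\Psi_\mathrm{D}|\lesssim t^{1-2\rho_\mathrm{L}}$ via the transport equation, as it requires uniform control of the characteristics of $\Psi$ past the asymptotic construction and a matching boundary condition at infinity. A cleaner alternative route — feasible precisely because $\rho_\mathrm{L}>1/2$ — is to \emph{construct} $\Psi$ in the first place by a contraction argument taking $\Psi_\mathrm{D}$ as the ansatz; then the comparison bound is automatic from the fixed-point construction, and the small discrepancy $O(t^{-2\rho_\mathrm{L}})$ in the Hamilton-Jacobi residual of $\Psi_\mathrm{D}$ drives the iteration.
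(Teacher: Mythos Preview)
Your approach is genuinely different from the paper's, and the core step has a real gap.

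The paper does \emph{not} reduce Theorem~\ref{theorem_3} to Theorem~\ref{theorem_1}. Instead it reruns the entire machinery of Sections~2--5 with $\Psi_{\mathrm D}$ in place of $\Psi$: one defines $U_{\mathrm D}(t)$, $\CAL U_{\mathrm D,j}(t)$, $\CAL E_{\mathrm D}(t)$ exactly as before and repeats the contraction argument. The only change appears in the analogue of Lemma~\ref{lemma_4_4}: the operator $A_{\mathrm D}(t)$ now carries the extra zeroth-order term $\partial_t\Psi_{\mathrm D}+\tfrac12|\nabla\Psi_{\mathrm D}|^2+V^{\mathrm L}$, which vanished for $\Psi$ but is $O(t^{-2\rho_{\mathrm L}})$ for $\Psi_{\mathrm D}$ by Proposition~\ref{proposition_Dollard_1}. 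This produces the new constraint $b<2\rho_{\mathrm L}-1$ and otherwise the proof is identical. No comparison between $\Psi$ and $\Psi_{\mathrm D}$ is ever made.

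Your route, by contrast, hinges on the pointwise bound $|\Psi-\Psi_{\mathrm D}|\lesssim t^{1-2\rho_{\mathrm L}}$, and this is where the gap is. Your transport-equation argument shows that along each characteristic $s\mapsto x(s)$ the quantity $(\Psi-\Psi_{\mathrm D})(s,x(s))$ has a limit as $s\to\infty$ (the right-hand side being integrable), but that limit is an a priori unknown function $c(\xi)$ of the asymptotic direction; you then need $c\equiv0$. The assertion that this ``is built into the asymptotic construction of $\Psi$ in Section~3'' is not justified: Section~3 builds $\Psi$ from an initial condition $S(0,\xi)=0$ via characteristics and a Legendre transform, and nothing there pins down the asymptotic constant relative to $\Psi_{\mathrm D}$. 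Hamilton--Jacobi solutions with the same gradient asymptotics can differ by a bounded function of $x/t$, and that is exactly the ambiguity you have not ruled out. Your ``cleaner alternative'' --- reconstruct $\Psi$ as a perturbation of $\Psi_{\mathrm D}$ --- does not close the gap either, because $u_{\mathrm p}$ in Theorem~\ref{theorem_1} is defined with the \emph{specific} $\Psi$ of Proposition~\ref{proposition_3_1}; replacing it by a different solution means you are no longer invoking Theorem~\ref{theorem_1} as stated but redoing its proof, which is precisely the paper's approach with an unnecessary detour.
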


\subsection{Background on the nonlinear scattering}
\label{background}
There is a vast literature on the scattering theory for the standard NLS of the form
\begin{align}
\label{NLS2}
i\partial_t u=H_0u+\nu |u|^{\alpha}u,\quad t\in \R,\ x\in \R^n,\ \nu\in \R.
\end{align}
The scenario for the asymptotic behavior of $u$ to \eqref{NLS2} as $t\to \infty$ can be (very roughly) divided into two cases.  If $\frac 2n<\alpha$ for $n=1,2$ or $\frac 2n<\alpha\le \frac{4}{n-2}$ for $n\ge3$, then $u$ can be approximated by a free solution as $t\to \pm\infty$, namely there exist scattering data $u_\pm$ such that $$\lim_{t\to\pm \infty} \|u(t)-e^{-itH_0}u_\pm\|_{L^2}=0$$ at least for smooth, localized small initial data $u(0)=u_0$ (see \cite{TsuYa}). This phenomenon is called the (small data) {\it scattering}. On the other hand, when $0<\alpha\le \frac 2n$, no nontrivial solution converges to a free solution (see \cite{Strauss,Barab}). In this sense, the case with $\alpha \le \frac 2n$ is said to be of {\it long-range type}, besides the case with $\alpha=\frac 2n$ is said to be critical. For the critical case $\alpha=\frac 2n$ and $1\le n\le3$, Ozawa \cite{Ozawa1991} and Ginibre-Ozawa \cite{Ginibre_Ozawa1993} studied the final state problem associated with \eqref{NLS2}, determining the asymptotic profile of the solutions as $t\to \infty$ for given scattering state $u_+$. Their method is based on the Dollard decomposition $$e^{-itH_0}=\CAL{M}(t)\CAL{D}(t)\F\CAL{M}(t)$$ (see \eqref{MDFM} below for details) and an analysis of the reduced ODE of the form  $$i\partial_t W(t)=t^{-1}\nu |W(t)|^{\frac2n} W(t).$$ The Dollard decomposition can be used to find out the contribution of the linear dispersion to the asymptotic profile, while the reduced ODE for the contribution of the nonlinear term. The modified scattering for the initial value problem of \eqref{NLS2} in the critical case $\alpha=\frac 2n$ and $1\le n\le3$ was studied by Hayashi-Naumkin \cite{HaNa1998}, where these two techniques also played fundamental roles. Since then, the modified scattering theory for \eqref{NLS2} or NLS with more general critical nonlinearities has been extensively studied by using the methods of the aforementioned papers (see, for instance, \cite{HaNa2006,HaWaNa,NaSu,MaMi,MMU}). We also refer to \cite{LiSo,KaPu,IfTa} for different methods.

When $V \not\equiv 0$, obtaining an explicit formula and a Dollard type decomposition for $e^{-itH}$ is  impossible in general except for a few explicit examples of $V$. Hence, generalizing the above results for \eqref{NLS2} to the case with potentials is not straightforward and the scattering theory for the NLS with linear potentials (especially in the long-range case) is much less understood compared with \eqref{NLS2}.  Nevertheless, based on the Strichartz estimates for $H=H_0+V$, there are still several works in any space dimensions if both of the nonlinear term $|u|^\alpha u$ and the linear potential $V(x)$ are of very short-range type in the sense that $\frac 4n\le \alpha\le \frac{4}{n-2}$ and $|V(x)|\lesssim \<x\>^{-2}$ at least. Moreover, the scattering theory for \eqref{NLS1} (with potentials) has been recently studied for the case when $n=1$ and $V$ is of very short-range type (see \cite{IPNa,INa,GePuRo,ChePu,Se,MaMuSe}). However, as already mentioned in the introduction, there is no previous results on the scattering theory for \eqref{NLS1} with long-range (or even short-range, but not very short-range) potentials. Moreover, if $n\ge2$ then there seems to be also no previous positive results on the scattering for \eqref{NLS1}  even if $V$ is of very short-range type. We also would emphasize that the results for the one dimensional case mentioned above used in essential ways several techniques such as the stationary scattering theory via Jost functions, which are available only in one space dimension. Hence, our results contribute to the study of  \eqref{NLS1} from a perspective of including the long-range potentials and handling two and three space dimensions. Moreover, the one of interesting point of our results is to include the Coulomb-type long-range potential; hence, we believe that the results in this paper are of great importance not only mathematically but also physically.


\subsection{Idea of the proof}
We here explain briefly the main idea of the proof of Theorem \ref{theorem_1}. For simplicity, we consider the case $V=V^{\mathrm{L}}$ and $\rho_{\mathrm L}<1$ only. 

In the linear long-range scattering theory, there are several choices of the modified free evolution. Among them, we employ a position-dependent modifier proposed by Yafaev \cite{Yafaev} (see also \cite{DeGe}). Precisely, Yafaev used the asymptotic profile of the form
$
(it)^{-n/2}e^{i\Psi_{}(t,x)}\widehat{u_+}\left(x/t\right)
$. 
 The advantage of such a modifier is that $\CAL{M}_\Psi(t):=e^{i\Psi(t,x)}$ is just a multiplication operator, so easier to treat than other known modifiers which are usually given by pseudo-differential  or Fourier integral operators (see e.g. H\"{o}rmander \cite{Hormander} or Isozaki-Kitada \cite{IsoKi}). Our asymptotic profile $u_{\mathrm{p}}$ given in \eqref{asymptotic_profile} is exactly a mixture of Yafaev's modifier $\CAL{M}_\Psi(t)$ for linear long-range scattering and Ozawa's one $W(t,x)=e^{-i\nu|\widehat{u_+}(x)|^{2/n}\log t}\widehat{u_+}(x)$ for nonlinear long-range scattering. 
 
 With the asymptotic profile $u_{\mathrm{p}}$ at hand, we reformulate \eqref{NLS1} into the following integral equation: 
\begin{align}
\label{idea_1}
u(t)=u_{\mathrm{p}}(t)+i\int_t^\infty e^{-i(t-s)H}\left\{F(u(s)) - F(u_{\mathrm{p}}(s))-(i\partial_s-H)u_{\mathrm{p}}(s)+ F(u_{\mathrm{p}}(s))\right\}ds. 
\end{align}
The final goal then is to show that the right hand side of \eqref{idea_1} is a contraction in an appropriate energy space equipped with the norm 
$$
\sup_{t\ge T} t^b\|u(t)\|_{L^2}+\sup_{t\ge T}  t^b\|u\|_{L^q([t,\infty);L^r)}
$$
with some $T$ large enough and admissible pair $(q,r)$, where $b$ is as in Theorem \ref{theorem_1}. 

The term associated with the difference $F(u) - F(u_{\mathrm{p}})$ can be dealt by the same argument based on the Strichartz estimates for $e^{-itH}$, as in the previous works for the NLS without potentials. In particular, no specific property of  $\Psi(t,x)$ will be used in this step. 

To deal with the remainder term (denoted by $\CAL{E}(t)$) associated with $-(i\partial_t-H)u_{\mathrm{p}}+ F(u_{\mathrm{p}})$, we first decompose it into a low velocity part $\CAL{E}_1(t)$ and high velocity part $\CAL{E}_2(t)$ associated with the regions $|x|\lesssim t$ and $|x|\gtrsim t$, respectively. For the low velocity part, thanks to the assumption $0\notin \supp \widehat{u_+}$, we can deal with $\CAL{E}_1(t)$ by using a simple propagation estimate based on the Dollard decomposition of $e^{-itH_0}$  (see Lemma \ref{lemma_4_2} below) and the standard nonlinear estimates
\begin{align}
\label{idea_2}
\|W(t)\|_{H^{\gamma}}\lesssim (\log t)^{\lr{\gamma}}\Gamma_{\lceil \gamma \rceil}(u_+),\quad
\|F(W(t))\|_{H^{\gamma}}\lesssim (\log t)^{\lr{\gamma}}\Gamma_{\lceil \gamma \rceil+1}(u_+)
\end{align}
with $\Gamma_{a}(u_+)=(1 + \left\| \widehat{u_+} \right\|_{H^{\gamma}}^{\frac{2a}n} )\|u\|_{H^{\gamma}}$. Again, no specific structure of $\Psi$ will be used in this step. For the high velocity part $\CAL{E}_2(t)$, we use the fact that $u_{\mathrm{p}}$ satisfies 
\begin{align*}
F(u_{\mathrm{p}}) &=\CAL{M}_\Psi(t)\CAL{D}(t)t^{-1}F(W) 
=  \CAL{M}_\Psi(t)\CAL{D}(t)i\partial_t W 
\end{align*}
to find that the main term of $(i\partial_t-H)u_{\mathrm{p}}$ and $F(u_{\mathrm{p}})$ cancel each other out, namely 
\begin{align*}
-(i\partial_t-H)u_{\mathrm{p}}+ F(u_{\mathrm{p}})
& =  e^{-itH} \left( -i\partial_t e^{itH}\CAL{M}_\Psi(t)\CAL{D}(t) W \right) +F(u_{\mathrm{p}})
\\ & = -e^{-itH} [i\partial_t,e^{itH}\CAL{M}_\Psi(t)\CAL{D}(t)   ]W,
\end{align*}
where $[\cdot,\cdot]$ denotes the commutator. If we calculate the above commutator directly, one of the unremovable term in the commutator calculation, $ \Delta \CAL{M}_\Psi(t)\CAL{D}(t) W $, can not be defined unless $W \in H^2 ({\mathbb{R} } ^n)$ at least, while to have $W \in H^2 ({\mathbb{R} } ^n)$ when $n = 3$ is difficult. Hence, we use Lemma \ref{lemma_4_3} and give some modifications to the above arguments, to change the target to handle from the above commutator to $e^{-itH}[i\partial_t,e^{itH}\CAL{M}_\Psi(t)\CAL{D}(t) \CAL{F} \CAL{M}(t) \CAL{F} ^{-1} ]W$. In the high-velocity region $|x|\gtrsim t$, this term can be written in the form
$$
\CAL{M}_\Psi(t)\left\{-\partial_t\Psi-\frac12|\nabla\Psi|^2-V_{T_1}+i\left(\nabla \Psi-\frac xt\right)\cdot\nabla +i\left(\Delta \Psi-\frac nt\right)\right\}\CAL{D}(t)\F \CAL{M}(t)\F^{-1}W,
$$
where $\CAL{M}(t)=e^{i|x|^2/(2t)}$. To obtain the time-decay of this term, we shall construct $\Psi$ in such a way that $\Psi$ satisfies the Hamilton--Jacobi equation \eqref{K8} and
$$
|\nabla \Psi-x/t|+\<t\>|\Delta \Psi-n/t|\lesssim \<t\>^{-\rho_\mathrm{L}}. 
$$
for $|x|\gtrsim t$. We follow a similar argument as in Derezi\'nski-G\'erard \cite{DG} for the construction of $\Psi$, which is based on the standard method of characteristics. This decay estimate and the bound
$$
\|\CAL{D}(t)\F \CAL{M}(t)\F^{-1}W\|_{L^2}+\<t\>\|\nabla \CAL{D}(t)\F \CAL{M}(t)\F^{-1}W\|_{L^2}\lesssim \|W(t,\cdot)\|_{H^1}
$$
as well as \eqref{idea_2} yield a desired decay estimate for the high-velocity part $\CAL{E}_2(t)$. 

\subsection{Organization of the paper} The rest of the paper is devoted to the proof of Theorems \ref{theorem_1} and \ref{theorem_3}. In Section \ref{section_2}, we introduce the integral equation we will solve more precisely. We also recall the Dollard decomposition of $e^{-itH_0}$ and some nonlinear estimates (Lemma \ref{lemma_nonlinear}) in Section \ref{section_2}. The construction of $\Psi$ is given in Section 3. We prove several necessary  energy estimates for $\CAL{E}(t)$ in Section 4. The final step of the proof of Theorem \ref{theorem_1} is given in Section 5. The proof of Theorem \ref{theorem_3} is given in Section 6. For reader's convenience, we give the proof of existence of global $L^2$-solution in  Appendix \ref{appendix_proposition_GWP} and the proof of the nonlinear estimates in Appendix \ref{appendix_lemma_nonlinear}, respectively. 


\section{Preliminaries}
\label{section_2}
Throughout the paper, we assume Assumptions \ref{assumption_A} and \ref{assumption_B}.

\subsection{Notation}
\label{subsection_notation}
We first introduce some notations.  
\begin{itemize}
\item $\lceil \gamma\rceil=\min\{m\in \Z\ |\ m\ge\gamma\}$ denotes the smallest integer greater than or equal to $\gamma$. 
\item For positive constants $A$ and $B$, $A\lesssim B$ means $A\le CB$ for some non-important constant $C$. We define $A\gtrsim B$ similarly. $A\sim B$ means $A\lesssim B$ and $A\gtrsim B$. 
\item Let $L^p=L^p(\R^n)$ be the Lebesgue space and $\norm{u}_{L^p}=\norm{u}_{L^p(\R^n)}$. 
\item $H^s=H^s(\R^n)$ denotes the $L^2$-Sobolev space of order $s$ with norm $\norm{f}_{H^s}=\norm{\<D\>^sf}_{L^2}$. 
\end{itemize}

\subsection{Dollard decomposition}

Here we recall the Dollard decomposition  \eqref{MDFM} (also often called the MDFM decomposition) of the free propagator $e^{-itH_0}$. Let 
\begin{align}
\label{MD}
\CAL{M}(t)f(x):=e^{\frac{i|x|^2}{2t}}f(x),\quad \CAL{D}(t) f(x) := (it)^{-n/2} f(x/t).
\end{align}
and $\F$ be the Fourier transform. Note that all of them are unitary on $L^2(\R^n)$. 
Then we have
\begin{align}
\label{MDFM}
e^{-itH_0}f(x)=(2\pi it)^{-n/2}\int_{\R^n}e^{\frac{i|x-y|^2}{2t}}f(y)dy=\CAL{M}(t)\CAL{D}(t)\SCR{F}\CAL{M}(t)f(x)
\end{align}
by a direct calculation. Here we record two basic estimates for $\CAL{M}(t)$ and $\CAL{D}(t)$: 
\begin{align}
\label{Dollard_1}
\|\CAL{D}(t)f\|_{L^r}=|t|^{-n(1/2-1/r)}\|f\|_{L^r},\quad \|(\CAL{M}(t)-1)f\|_{L^r}\lesssim |t|^{-\delta/2}\||x|^{\delta} f\|_{L^r}
\end{align}
for all $1\le r\le\infty$ and $0\le \delta \le 2$. The latter estimate follows from the bound $|e^{\frac{i|x|^2}{2t}}-1|\lesssim (|x|^2/t)^{\delta/2}$.


\subsection{Integral equation}
\label{subsection_integral_equation}
Next we introduce the integral equation associated with our problem. Using the above notation, we observe from \eqref{MDFM} that
$$
\CAL{D}(t)=\CAL{D}(t)\SCR{F}\left\{1-\CAL{M(t)}\right\}\SCR{F}^{-1}+\CAL{M}(-t)e^{-itH_0}\SCR{F}^{-1}. 
$$
Set $\CAL{M}_\Psi(t)f(x) :=e^{i\Psi(t,x)}f (x) $ and $W(t,x):= e^{-i \nu \left| \widehat{u_+}(x) \right|^{2/n} \log t } \widehat{u_+}(x)$. Note that $W$ satisfies
\begin{align}
\label{integral_1}
i \partial _t W(t,x) = \frac{1}{t}F(W(t,x)),\quad t\neq0,\ x\in \R^n. 
\end{align}
Let $\chi\in C_0^\infty(\R^n)$ be as in Section 1 (see \eqref{K17}) and set $\chi_t(x)=\chi(x/t)$. Define the operators $U_{\Psi} (t)$, $\CAL{U}_j(t)$ for $j=1,2,3$ by
\begin{align*}
U_{\Psi} (t) &=  \CAL{M}_\Psi(t) \CAL{M}(-t) e^{-i t H_0},\\
\CAL{U}_1(t) & =    \CAL{M}_\Psi(t) \CAL{D}(t) \SCR{F}\left\{ 1- \CAL{M}(t)  \right\}\SCR{F}^{-1},\\
\CAL{U}_2(t) &= \chi(x/t)U_{\Psi} (t)\SCR{F}^{-1},\\ 
\CAL{U}_3(t)&=\left\{1-\chi(x/t)\right\}U_{\Psi} (t)\SCR{F}^{-1}.
\end{align*}
Then we can write 
\begin{align}
\label{integral_2}
\CAL{M}_\Psi(t)\CAL{D}(t)=\CAL{M}_\Psi(t) \CAL{D}(t) \SCR{F}\left\{ 1- \CAL{M}(t)  \right\}\SCR{F}^{-1}+\{\chi_t+(1-\chi_t)\} U_\Psi(t)\SCR{F}^{-1}
=\sum_{j=1}^3\CAL{U}_j(t)
\end{align}
so that
$$u_{\mathrm{p}}(t,x)=\CAL{M}_\Psi(t)\CAL{D}(t) W(t,x)=\sum_{j=1}^3\CAL{U}_j(t)W(t,x).$$
It will be seen in Lemma \ref{lemma_4_3} in Section 4 that 
\begin{align}
\label{integral_3}
\lim_{t\to\infty}\|\CAL{U}_j(t)W(t)\|_{L^2}=0,\quad j=1,2.
\end{align}
Note that \eqref{integral_3} can fail for $j=3$ in general. Now we assume for a while that $u$ is a smooth solution to \eqref{NLS1} satisfying $\|u-u_{\mathrm{p}}\|_{L^2}\to 0$ as $t\to +\infty$. Then \eqref{NLS1}
leads the Duhamel formula: 
\begin{align}
\label{integral_4}
u(t)-u_{\mathrm{p}}(t)=i\int_t^\infty e^{-i(t-s)H}\left\{F(u(s)) - F(u_{\mathrm{p}}(s))-(i\partial_s-H)u_{\mathrm{p}}(s)+ F(u_{\mathrm{p}}(s))\right\}ds. 
\end{align}
For the term $(i\partial_s-H)u_{\mathrm{p}}(s)$, since 
$$
e^{isH}(i\partial_s-H)u_{\mathrm{p}}(s)=i\partial_s(e^{isH}u_{\mathrm{p}}(s))=\sum_{j=1}^3i\partial_s (e^{isH}\CAL{U}_j(s)W(s)),
$$
we know by \eqref{integral_3} that
\begin{align}
\nonumber
&-i\int_t^\infty e^{-i(t-s)H}(i\partial_s-H)u_{\mathrm{p}}(s)ds\\
\label{integral_5}
&=-\CAL{U}_1(t)W(t)-\CAL{U}_2(t)W(t)-i\int_t^\infty e^{-itH}i\partial_s(e^{isH}\CAL{U}_3(s)W(s))ds.
\end{align}
Here \eqref{integral_1} yields that the last term of the RHS is written in the form
\begin{align}
\nonumber
&-i\int_t^\infty e^{-itH}i\partial_s(e^{isH}\CAL{U}_3(s)W(s))ds\\
\label{integral_6}
&=-i\int_t^\infty e^{-i(t-s)H}\CAL{C}(s)W(s)ds-i\int_t^\infty e^{-i(t-s)H}\CAL{U}_3(s)\frac{F(W(s))}{s}ds,
\end{align}
where $\CAL{C}(s)$ denotes an extension of the commutator
$$
e^{-isH}[i\partial_s,e^{isH}\CAL{U}_3(s)]=e^{-isH}i\partial_se^{isH}\CAL{U}_3(s)-i\CAL{U}_3(s)\partial_s.
$$
In Lemma \ref{lemma_4_4} below, we will give the precise definition of $\CAL{C}(s)$ and show that $\CAL{C}(s)W(s)$ belongs to $L^1([T,\infty);L^2(\R^n))$, whenever $\widehat u_+\in H^\gamma$ with $\gamma>\frac n2$ and $T$ is large enough. For the last term $F(u_{\mathrm{p}}(t))$ in \eqref{integral_4}, we find by \eqref{MD} 
and \eqref{integral_2} that
\begin{align}
\label{integral_7}
F(u_{\mathrm{p}}(t))=\CAL{M}_\Psi(t)\CAL{D}{(t)}\frac{F(W(t))}{t}=\sum_{j=1}^3\CAL{U}_j(t)\frac{F(W(t))}{t},
\end{align}
where the term $i\int_t^\infty e^{-i(t-s)H}\CAL{U}_3(s)\frac{F(W(s))}{s}ds$ and the last term of \eqref{integral_6} cancel each other out. 

With the above calculations at hand, we now introduce the integral equation we will solve. Set
\begin{align}
\nonumber
\CAL{E}(t) &= \CAL{E}_1 (t) + \CAL{E}_2(t),\\
\nonumber
\CAL{E}_1 (t) &= - \left(\CAL{U}_1(t)+\CAL{U}_2(t) \right)W(t)
 + i \int _t^{\infty} e^{-i(t-s)H } \left(\CAL{U}_1(s)+\CAL{U}_2(s)\right) \frac{F(W(s))}{s} ds,  \\ 
\nonumber
\CAL{E}_2 (t) & = -i \int _t^{\infty} e^{-i(t-s)H}\CAL{C}(s)W(s) {ds},\\
 \label{K3}
\CAL{K}[u](t)&= i \int_t^{\infty} e^{-i (t-s) H} \left( F(u(s)) -F(u_{\mathrm{p}} (s)) \right) ds.
\end{align}
Then \eqref{integral_4}--\eqref{integral_6} lead the following integral equation for $u(t)$: 
\begin{align}\label{K21}
u(t) =u_{\mathrm{p}} (t)+ \CAL{K}[u](t) + \CAL{E}(t).
\end{align}
We define the energy space $X(T,R,q,r)$ by
$$
X(T,R,q,r):=\{u\in C([T,\infty);L^2(\R^n) )\ |\ \|u-u_{\mathrm{p}}\|_{X_{T,q,r}}\le R\}
$$
equipped with a distance function $d(u,v)=\|u-v\|_{X_{T,q,r}}$, where
$$
\|u\|_{X_{T,q,r}} := \sup_{t \geq T} t^b \| u (t) \|_{L^2(\R^n)} + \sup_{t\geq T} t^b \| u\|_{L^q([T,\infty);L^r(\R^n))}.
$$
In the following sections, we will construct the solution $u$ to \eqref{K21} satisfying \eqref{theorem_1_1} by showing that the map $u\mapsto u_{\mathrm{p}}+ \CAL{K}[u] + \CAL{E}$ is a contraction in $X(T,R,q,r)$ with an appropriate $T,q,r$ and any $R$. Although \eqref{K21} is different from the usual integral equation for \eqref{NLS1}, the next lemma shows that the solution to \eqref{K21} is in fact a solution to the standard integral equation associated with \eqref{NLS1}. 

\begin{lemma}
\label{lemma_integral_1}
Suppose $u\in C([t_0,\infty);L^2(\R^n))\cap L^q([t_0,\infty);L^r(\R^n))$ for any admissible pair $(q,r)$ and $u$ is a solution to \eqref{K21} with some $t_0\in \R$. 
Then
\begin{align}
\label{lemma_integral_1_1}
u(t) = e^{-i(t-t_0)H} u(t_0) -i \int_{t_0}^{t} e^{-i(t-s) H} F(u(s)) ds, \quad t \geq t_0.
\end{align}
\end{lemma}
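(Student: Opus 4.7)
The plan is to derive \eqref{lemma_integral_1_1} from \eqref{K21} by applying $e^{-i(t-t_0)H}$ to \eqref{K21} evaluated at $t_0$ and comparing with \eqref{K21} evaluated at $t$. The key identity this boils down to is that $u_{\mathrm{p}}+\CAL{E}$ is itself a mild solution of the inhomogeneous Schr\"odinger equation with source $F(u_{\mathrm{p}})$, namely
\begin{align}\label{sketch_key}
u_{\mathrm{p}}(t)+\CAL{E}(t) = e^{-i(t-t_0)H}\bigl(u_{\mathrm{p}}(t_0)+\CAL{E}(t_0)\bigr) - i\int_{t_0}^{t}e^{-i(t-s)H}F(u_{\mathrm{p}}(s))\,ds.
\end{align}
Granted \eqref{sketch_key}, I combine it with the elementary semigroup identity $e^{-i(t-t_0)H}\CAL{K}[u](t_0)=\CAL{K}[u](t) + i\int_{t_0}^{t}e^{-i(t-s)H}(F(u(s))-F(u_{\mathrm{p}}(s)))\,ds$, where the factor $i$ is the one already present in the Duhamel derivation of \eqref{K21} in Section \ref{section_2}. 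Adding these two identities and inserting the result into \eqref{K21}, the $F(u_{\mathrm{p}})$ and $F(u)-F(u_{\mathrm{p}})$ pieces combine into $-i\int_{t_0}^{t}e^{-i(t-s)H}F(u(s))\,ds$, giving \eqref{lemma_integral_1_1}.

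To prove \eqref{sketch_key}, I first use the decomposition \eqref{integral_2}, i.e.\ $u_{\mathrm{p}}=(\CAL{U}_1+\CAL{U}_2+\CAL{U}_3)W$, so that the pointwise term $-(\CAL{U}_1+\CAL{U}_2)(t)W(t)$ of $\CAL{E}_1(t)$ cancels the first two summands of $u_{\mathrm{p}}(t)$ and leaves
\begin{align*}
u_{\mathrm{p}}(t)+\CAL{E}(t)=\CAL{U}_3(t)W(t)+i\int_{t}^{\infty}e^{-i(t-s)H}\Bigl\{(\CAL{U}_1+\CAL{U}_2)(s)\frac{F(W(s))}{s}-\CAL{C}(s)W(s)\Bigr\}ds.
\end{align*}
From the definition of $\CAL{C}(s)$ as an extension of $e^{-isH}[i\partial_s,e^{isH}\CAL{U}_3(s)]$ and the ODE \eqref{integral_1}, I read off $\CAL{C}(s)W(s)=e^{-isH}i\partial_s\bigl(e^{isH}\CAL{U}_3(s)W(s)\bigr)-\nu\CAL{U}_3(s)F(W(s))/s$, which together with the identity $F(u_{\mathrm{p}})=s^{-1}\sum_j\CAL{U}_j F(W)$ of \eqref{integral_7} simplifies the integrand to $F(u_{\mathrm{p}}(s))-e^{-isH}i\partial_s(e^{isH}\CAL{U}_3(s)W(s))$. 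Writing the same formula at $t_0$, applying $e^{-i(t-t_0)H}$, and splitting $\int_{t_0}^{\infty}=\int_{t_0}^{t}+\int_{t}^{\infty}$, the tail $\int_{t}^{\infty}$ reproduces $\CAL{E}(t)-\CAL{U}_3(t)W(t)$ via the preceding display; the $F(u_{\mathrm{p}})$ piece of the $\int_{t_0}^{t}$ part yields the forcing integral on the right of \eqref{sketch_key}; and the $\partial_s$ piece integrates telescopically in $s$ to $\CAL{U}_3(t)W(t)-e^{-i(t-t_0)H}\CAL{U}_3(t_0)W(t_0)$, matching exactly the pointwise terms on both sides of \eqref{sketch_key}.

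The main obstacle is to make the two formal steps above rigorous, namely the interchange of $e^{-i(t-t_0)H}$ with the improper Duhamel-from-infinity integrals defining $\CAL{E}$, and the integration by parts in $s$ on the term carrying $i\partial_s(e^{isH}\CAL{U}_3(s)W(s))$. Both rely on the $L^1$-in-time integrability $\CAL{C}(\cdot)W(\cdot)\in L^1([T,\infty);L^2(\R^n))$ for $T$ large, which will be established in Lemma \ref{lemma_4_4}, together with the Strichartz estimate \eqref{strichartz_3} and the hypothesis $u\in L^q([t_0,\infty);L^r(\R^n))$ for admissible $(q,r)$, which ensures that $\CAL{K}[u]$ is well defined in $L^2$ and that its semigroup identity holds. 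Modulo these rigour points, the argument is simply the reverse of the derivation of \eqref{K21} carried out in Section \ref{section_2}.
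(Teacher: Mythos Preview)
Your proof is correct and follows essentially the same route as the paper: both reduce to the cancellation between $\CAL{U}_3(t)W(t)$, $e^{-i(t-t_0)H}\CAL{U}_3(t_0)W(t_0)$, and the telescoped integral $\int_{t_0}^{t}\partial_s\bigl(e^{isH}\CAL{U}_3(s)W(s)\bigr)\,ds$ on the compact interval, using \eqref{integral_2}, \eqref{integral_6}, and \eqref{integral_7}; your isolation of the identity \eqref{sketch_key} is just a clean repackaging of that computation. (One small slip: the tail $\int_t^\infty$ reproduces $u_{\mathrm{p}}(t)+\CAL{E}(t)-\CAL{U}_3(t)W(t)$ rather than $\CAL{E}(t)-\CAL{U}_3(t)W(t)$, but the bookkeeping still balances.)
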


\begin{proof}
It is enough to check that the right hand sides of \eqref{K21} and \eqref{lemma_integral_1_1} coincide with each other, which, after changing the variable $t\to t+t_0$, is equivalent to that the following holds for $t\ge0$: 
\begin{align}\nonumber 
0&=u_{\mathrm{p}} (t+t_0) - i \int_{t+t_0}^{\infty} e^{-i (t+t_0-s) H} F(u_{\mathrm{p}} (s)) ds+\CAL{E}(t+t_0)\\&\quad -e^{-itH} u(t_0)+i \int_{t_0}^{\infty} e^{-i (t+t_0-s) H} F(u(s))ds. 
\label{K-4/26}
\end{align}
By \eqref{integral_2}, \eqref{integral_3}, \eqref{integral_6}, \eqref{integral_7} and \eqref{K3}, the sum of the first three terms of the RHS of \eqref{K-4/26} is equal to
\begin{align*}
&u_{\mathrm{p}} (t+t_0) - i \int_{t+t_0}^{\infty} e^{-i (t+t_0-s) H} F(u_{\mathrm{p}} (s)) ds+\CAL{E}(t+t_0)\\
&=\CAL{U}_3(t+t_0)W(t+t_0) - i e^{-i(t+t_0)H}\int_{t+t_0}^{\infty} e^{isH}\left\{\CAL{U}_3(s)\frac{F(W(s))}{s}+\CAL{C}(s) W(s) \right\}{ds} \\
&= \CAL{U}_3(t+t_0)W(t+t_0)-ie^{-i(t+t_0)H}\int_{t+t_0}^\infty i\partial_s(e^{isH}\CAL{U}_3(s)W(s))ds.
\end{align*}
By using, in addition, \eqref{K21} with $t=t_0$ we similarly find that the sum of the last two terms of the RHS of \eqref{K-4/26} is equal to
\begin{align*}
&-e^{-itH} u(t_0)+i \int_{t_0}^{\infty} e^{-i (t+t_0-s) H} F(u(s))ds\\
&=-e^{-itH}u_{\mathrm{p}}(t_0)+i\int_{t_0}^\infty e^{-i(t+t_0-s)H}F(u_{\mathrm{p}}(s))ds-e^{-itH}\CAL{E}(t_0)\\
&=-e^{-itH}\CAL{U}_3(t_0)W(t_0)+ie^{-i(t+t_0)H} \int_{t_0}^\infty i\partial_s(e^{isH}\CAL{U}_3(s)W(s))ds.
\end{align*}
Hence the RHS of \eqref{K-4/26} is equal to
$$
\CAL{U}_3(t+t_0)W(t+t_0)-e^{-itH}\CAL{U}_3(t_0)W(t_0)+ie^{-i(t+t_0)H}\int_{t_0}^{t+t_0} i\partial_s(e^{isH}\CAL{U}_3(s)W(s))ds
$$
which vanishes identically. 
\end{proof}

\subsection{Nonlinear estimates}
We end this preliminary section to record the following nonlinear estimates, which play an important role in analyzing the $L^qL^r$-norms of $\CAL{K}[u](t)$.
\begin{lemma}
\label{lemma_nonlinear}
Let $n\ge1$, $\frac n2<\gamma<\min\left\{2,1+\frac2n\right\}$ and $\lambda\ge1$. Then
\begin{align}
\label{lemma_nonlinear_1}
\|e^{i\lambda |u|^{\frac2n}}u\|_{H^\gamma}&\lesssim \big(1+\lambda^{\lceil \gamma \rceil}\|u\|_{H^\gamma}^{\frac{2\lceil \gamma \rceil}{n}}\big)\|u\|_{H^\gamma}, \\
\label{lemma_nonlinear_2}
\|e^{i\lambda |u|^{\frac2n}} |u|^{\frac 2n}u\|_{H^\gamma}&\lesssim \big(1+\lambda^{\lceil \gamma \rceil}\norm{u}_{H^\gamma}^{\frac{2\lceil \gamma \rceil}{n}}\big)
\norm{u}_{L^{\infty}}^{\frac{2}{n}} \| u\|_{H^\gamma}.
\end{align}
\end{lemma}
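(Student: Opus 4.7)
The conditions $\gamma>n/2$ and $\gamma<1+2/n$ are precisely what is needed: the first yields the Sobolev embedding $H^\gamma\hookrightarrow L^\infty$ so that $\|u\|_{L^\infty}\lesssim\|u\|_{H^\gamma}$, while the second gives $\gamma-1<2/n$, matching the Hölder regularity of $z\mapsto|z|^{2/n}$ and making a fractional chain rule available for the composition $e^{i\lambda|u|^{2/n}}$. Since $\gamma<2$, one has $\lceil\gamma\rceil\in\{1,2\}$, with $\lceil\gamma\rceil=1$ arising only when $n=1$ and $\gamma\in(1/2,1]$. My plan is to prove the two estimates by combining the Kato--Ponce fractional Leibniz rule with a Christ--Weinstein/Ginibre--Ozawa type fractional chain rule, keeping a careful bookkeeping of the $\lambda$-dependence.

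For \eqref{lemma_nonlinear_1} I would first record the pointwise identity
\begin{align*}
\nabla\bigl(e^{i\lambda|u|^{2/n}}u\bigr)=e^{i\lambda|u|^{2/n}}\Bigl(\nabla u+\tfrac{2i\lambda}{n}|u|^{2/n-2}\Re(\bar u\nabla u)\,u\Bigr),
\end{align*}
so that pointwise $\bigl|\nabla(e^{i\lambda|u|^{2/n}}u)\bigr|\lesssim(1+\lambda|u|^{2/n})|\nabla u|$. In the case $\lceil\gamma\rceil=1$ this bound, together with a fractional Leibniz argument and $H^\gamma\hookrightarrow L^\infty$, already yields the claim. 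In the case $\lceil\gamma\rceil=2$ (the main case, always occurring when $n\ge 2$), I would split $\|f\|_{H^\gamma}\sim\|f\|_{L^2}+\||D|^{\gamma-1}\nabla f\|_{L^2}$ with $\gamma-1\in(0,1)$, apply the fractional Leibniz rule to the product displayed above, and handle the composition factor $|D|^{\gamma-1}e^{i\lambda|u|^{2/n}}$ through the fractional chain rule, using crucially $\gamma-1<2/n$. The worst term has one derivative $\nabla$ plus a further $|D|^{\gamma-1}$ falling on the exponential, which produces a factor $\lambda^2|u|^{4/n}$ multiplying derivatives of $u$ of order at most $\gamma$; invoking $\|u\|_{L^\infty}^{4/n}\lesssim\|u\|_{H^\gamma}^{4/n}$ then yields the $(1+\lambda^{\lceil\gamma\rceil}\|u\|_{H^\gamma}^{2\lceil\gamma\rceil/n})$ prefactor.

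For \eqref{lemma_nonlinear_2} I would write $e^{i\lambda|u|^{2/n}}|u|^{2/n}u=|u|^{2/n}\cdot\bigl(e^{i\lambda|u|^{2/n}}u\bigr)$ and apply fractional Leibniz once more. One branch pairs $\||u|^{2/n}\|_{L^\infty}=\|u\|_{L^\infty}^{2/n}$ with the bound \eqref{lemma_nonlinear_1} on the second factor; the other pairs $\||u|^{2/n}\|_{H^\gamma}\lesssim\|u\|_{L^\infty}^{2/n-1}\|u\|_{H^\gamma}$ (itself a fractional chain rule estimate for the Hölder composition $z\mapsto|z|^{2/n}$) with an $L^\infty$ bound on $e^{i\lambda|u|^{2/n}}u$. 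In both branches the extra factor $\|u\|_{L^\infty}^{2/n}$ emerges exactly as needed.

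The principal technical obstacle is the non-smoothness of $z\mapsto|z|^{2/n}$ and the singular factor $|u|^{2/n-2}$ appearing in the first-derivative identity: one must combine the naive pointwise bound with the manifest smoothness of the full expression $e^{i\lambda|u|^{2/n}}u$ (which is regular even where $u=0$), and the fractional order $\gamma-1$ must stay strictly below $2/n$ for the Christ--Weinstein composition estimate to close. This constraint is precisely the source of the restriction $\gamma<1+2/n$ and is the step where the argument is most delicate; everything else amounts to systematic applications of fractional Leibniz and Sobolev embedding.
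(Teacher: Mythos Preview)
Your approach to \eqref{lemma_nonlinear_1} is essentially that of the paper: both arguments differentiate once to obtain the identity you display, then control the remaining $\gamma-1$ fractional derivatives of the resulting products. The paper packages the product/composition step as a separate lemma, bounding $\|f(u)v\|_{H^s}$ for $f(u)\in\{|u|^{2/n},\,|u|^{2/n-2}u^2,\,e^{i|u|^{2/n}}\}$ via the Besov finite-difference characterization rather than Kato--Ponce, but the structure and the role of the constraint $\gamma-1<2/n$ are the same.

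Your argument for \eqref{lemma_nonlinear_2}, however, has a genuine gap when $n\ge 2$. The second branch of your Leibniz splitting requires $|u|^{2/n}\in H^\gamma$ with the bound $\||u|^{2/n}\|_{H^\gamma}\lesssim\|u\|_{L^\infty}^{2/n-1}\|u\|_{H^\gamma}$. For $n\ge2$ the map $z\mapsto|z|^{2/n}$ is only $C^{0,2/n}$ with $2/n\le 1<\gamma$, so no such fractional chain rule is available: if $u\in H^\gamma$ has a nondegenerate zero, then $|u|^{2/n}$ is in general \emph{not} in $H^\gamma$ at all (for $n=3$, a localized $|x_1|^{2/3}$ lies in $H^s$ only for $s<7/6<3/2<\gamma$). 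The negative exponent $2/n-1$ already signals the trouble.

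The paper sidesteps this by a change of unknown: set $v=e^{i\lambda|u|^{2/n}}u$, so that $|v|=|u|$ and hence $e^{i\lambda|u|^{2/n}}|u|^{2/n}u=|v|^{2/n}v$. Then the Ginibre--Ozawa--Velo estimate $\||v|^{2/n}v\|_{\dot H^\gamma}\lesssim\|v\|_{L^\infty}^{2/n}\|v\|_{\dot H^\gamma}$ applies directly, with $\|v\|_{L^\infty}=\|u\|_{L^\infty}$ and $\|v\|_{H^\gamma}$ controlled by \eqref{lemma_nonlinear_1}. Keeping the power $|\cdot|^{2/n}$ attached to the full factor $v$ rather than isolating $|u|^{2/n}$ is precisely what makes the composition estimate close.
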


\begin{proof}
The lemma is a special case of \cite[Lemma 4]{HaNa2006}. However, we provide a complete proof in Appendix \ref{appendix_lemma_nonlinear} for the sake of completeness. 
\end{proof}



\section{The phase function $\Psi$} 
\label{section_phase}
In this section, we construct the phase function $\Psi$ of the asymptotic profile $u_{\mathrm{p}}$ and prove its asymptotic property, which will be used in the study of the term $\CAL{E}_2(t)$. 
\begin{proposition}\label{proposition_3_1}
For sufficiently large $T_1\ge1$, there exists a solution $\Psi\in C^1([1,\infty)\times\R^n;\R)$ to \eqref{K8} such that $\Psi$ is $C^2$ with respect to $x \in \R^n$ and, for all $t\ge1$, 
\begin{align} \label{proposition_3_1_1}
\left\| \nabla \Psi_{} (t,x)-\frac xt
\right\|_{L^\infty(\R^n)} &\lesssim \max\{t^{- \rho_{\mathrm{L}}},t^{-1}\log t\},\\
\label{proposition_3_1_2}
\left\| \Delta\Psi_{} (t,x)-\frac nt
\right\|_{L^\infty(\R^n)} &\lesssim  \max\{t^{-1- \rho_{\mathrm{L}}},t^{-2}\log t\}. 
\end{align}
\end{proposition}

A similar statement as this proposition was previously obtained by Derezi\'{n}ski-G\'{e}rard \cite{DeGe} (see also Yafaev \cite{Yafaev}). They proved $\partial _x^{\alpha} ( \Psi (t,x) - \frac{|x|^2}{2t})=o(1)$ as $t\to +\infty$ if $|\alpha|=1,2$ under somewhat weaker assumption than (A1) in Assumption \ref{assumption_A}, which was sufficient for the linear long-range scattering theory. However, the concrete decay estimates \eqref{proposition_3_1_1} and \eqref{proposition_3_1_2} play an important role in the present nonlinear problem since it affects  the decay rate of $\CAL{E}(t)$ and thus will be needed to show that the nonlinear map $u\mapsto u_{\mathrm{p}}+\CAL{K}[u]+\CAL{E}$ is a contraction in the space $X(T,R,q,r)$.

The proof of this proposition follows closely the argument by Derezi\'{n}ski-G\'{e}rard \cite[Sections 1.5, 1.8 and A.3]{DG}. We may assume without loss of generality 
$$
0<\rho_{\mathrm L}\le 1
$$
in this section. For the case $1<\rho_{\mathrm L}\le 1+n/4$, it is enough to replace $\rho_{\mathrm L}$ by $1$ in the following argument. Consider the Hamilton equation 
\begin{align}\label{K5}
\frac{d}{dt} X(t ,\xi) = \Xi (t,\xi), \quad 
\frac{d}{dt} \Xi(t, \xi) = - (\nabla_x V_{T_1}) (t, X (t, \xi)),\quad t\ge0,
\end{align}
with the given initial position at $t=0$ and final momentum at $t\to\infty$: 
\begin{align}
\label{Hamilton_data}
X(0,\xi)=0,\quad \lim_{t\to\infty}\Xi(t,\xi)=\xi.
\end{align}
We often suppress the variables $\xi$ for short if there is no confusion. 
\begin{lemma}
\label{lemma_3_2}
For any $\xi\in \R^n$ and sufficiently large $T_1\ge1$, there exists a unique solution $(X,\Xi)\in C^1([0,\infty)\times\R^{n};\R^{2n})$ to \eqref{K5}--\eqref{Hamilton_data} such that, for $\alpha\in \Z^n_+$ satisfying $|\alpha|\le1$, 
\begin{align}
\label{lemma_3_2_1}
|\partial_\xi^\alpha (X(t, \xi)  - t\xi) |&\lesssim 
\begin{cases}
\J{t} (t+T_1)^{-\rho_{\mathrm L}}&\text{if}\ \rho_{\mathrm L}<1,\\ \log(tT_1^{-1}+2)&\text{if}\ \rho_{\mathrm L}=1,
\end{cases}\\
\label{lemma_3_2_2}
|\partial_\xi^\alpha(\Xi(t, \xi) -\xi)|&\lesssim (t+T_1)^{-\rho_{\mathrm L}}.
\end{align}
\end{lemma}

\begin{proof}
The system \eqref{K5}--\eqref{Hamilton_data} is rewritten as
\begin{align}
\label{lemma_3_2_proof_1}
X(t,\xi)&=t\xi+\int_0^t\int_r^\infty (\nabla_xV_{T_1})(s,X(s,\xi))dsdr
=t\xi+\int_0^\infty \min\{t,s\}(\nabla_xV_{T_1})(s,X(s,\xi))ds,\\
\label{lemma_3_2_proof_2}
\Xi(t,\xi)&=\xi+\int_t^\infty (\nabla_xV_{T_1})(s,X(s,\xi))ds. 
\end{align}
Setting for short
$${\displaystyle \theta_{\mathrm{L}} (t) = \begin{cases}
{ \J{t} }(t+T_1)^{-\rho_{\mathrm L}}&\text{if}\ \rho_{\mathrm L}<1,\\ \log(tT_1^{-1}+2)&\text{if}\ \rho_{\mathrm L}=1,
\end{cases}}$$ 
we define the complete metric space $(\mathcal Z_M, d)$ by
\begin{align*}
\mathcal Z_{M}^{} &= \left\{ Z\in C^{1}([0,\infty) ; \R ^n) \, \middle| \, \left\| Z \right\|_{\mathcal Z} :=\sup_{\xi\in \R^n}\sup_{t \in [0, \infty) } \frac{|   Z(t, \xi) | + | \nabla_{\xi} Z(t, \xi) | }{\theta_{\mathrm{L}} (t)}  \leq M \right\}, 
 \\
 d (\phi, \psi) &=  \left\| \phi - \psi  \right\|_{\mathcal Z}
 \end{align*} 
with some constant $M>0$ specified later, and set
 \begin{align*} 
   \Phi [Z](t,\xi) = \int_0^\infty \min\{t,s\}\nabla_xV_{T_1}(s,Z(s,\xi)+s\xi)ds,
\end{align*}
where, by \eqref{V_ell}, $\nabla_xV_{T_1}(t,x)$ satisfies 
\begin{align}
\label{V}
|\partial_x^\alpha \nabla_xV_{T_1}(t,x)|\lesssim (t+T_1)^{-1-\rho_{\mathrm L}-|\alpha|},\quad |\alpha|\le 1,
\end{align}
uniformly in $t>0$, $x\in \R^n$ and $T_1\ge1$. We shall show $\Phi$ is a contraction on $\mathcal Z_M$. 

Suppose first $Z\in \mathcal Z_{M}$. It follows from \eqref{V} that 
\begin{align*}
 \left|\int_0^\infty \min \{ t,s \} \nabla_x V_{T_1}(s,x)ds\right|&\lesssim \int_0^\infty \min\{t,s\}(s+T_1)^{-1-\rho_{\mathrm L}}ds\\
 &\lesssim\int_0^t\left\{(s+T_1)^{-\rho_{\mathrm L}}-T_1(s+T_1)^{-1-\rho_{\mathrm L}}\right\}ds+t\int_t^\infty (s+T_1)^{-1-\rho_{\mathrm L}}ds\\
&\lesssim \theta_{\mathrm{L}} (t)
\end{align*}
uniformly in $t\ge0$, $x\in \R^n$ and $T_1\ge1$. Hence 
\begin{align}
\label{lemma_3_2_proof_3}
\ds C_1:=\sup_{\xi\in \R^n}\sup _{t \in [0,\infty)}\theta_{\mathrm{L}} (t)^{-1} |\Phi[Z] |<\infty.
\end{align}
Note that $C_1$ is independent of $T_1$. Similarly, we obtain
\begin{align}
\nonumber
|\nabla_\xi\Phi[Z](t,\xi)|
&\lesssim \int_0^\infty \min\{t,s\}(s+T_1)^{-2-\rho_{\mathrm L}}(|\nabla_\xi Z(s,\xi)|+s)ds\\
\nonumber
&\lesssim M\int_0^\infty \min\{t,s\}(s+T_1)^{-2-\rho_{\mathrm L}}\theta_{\mathrm L}(s)ds+\int_0^\infty \min\{t,s\}(s+T_1)^{-2-\rho_{\mathrm L}}sds\\
\nonumber
&\lesssim M\int_0^\infty \min\{t,s\}(s+T_1)^{-1-\frac{3\rho_{\mathrm L}}{2}}ds+\theta_{\mathrm L}(t)\\
\label{lemma_3_2_proof_4}
&\le C_2(T_1^{-\frac{\rho_{\mathrm L}}{2}}M+1)\theta_{\mathrm L}(t)
\end{align}
with some $C_2>0$ independent of $t,\xi,M,T_1$. Let $C_2T_1^{-\rho_{\mathrm L}/2}\le 1/4$ and $M=\max\{2C_1,4C_2\}$. Then the RHS of \eqref{lemma_3_2_proof_4} is dominated by $M\theta_{\mathrm L}(t)/2$, and thus $\Phi[Z]\in \mathcal Z_M$ by \eqref{lemma_3_2_proof_3} and \eqref{lemma_3_2_proof_4}. Next, if $Z_1,Z_2\in \mathcal Z_M$, then we obtain by the fundamental theorem of calculus and \eqref{V}, \begin{align}
\nonumber
\left| (\nabla V_{T_1} ) (t, Z_1+t\xi) -(\nabla V_{T_1} ) (t, Z_2+t\xi)  \right| 
&\lesssim (t + T_1)^{-2-\rho_{\mathrm{L}}} | Z_1 - Z_2| \\
\label{lemma_3_2_proof_5}
&\lesssim  M(t+T_1)^{-2 - \rho_{\mathrm{L}}}\theta_{\mathrm L}(t)  \left\| Z_1- Z_2 \right\|_{\mathcal Z},
\end{align} 
and similarly 
\begin{align}
\nonumber
&\left| \nabla_\xi \{(\nabla_xV_{T_1} ) (t, Z_1+t\xi) -(\nabla_xV_{T_1} ) (t, Z_2+t\xi)\} \right|\\
\nonumber
&\lesssim (t + T_1)^{-2-\rho_{\mathrm{L}}} | \nabla_\xi (Z_1 - Z_2)| +  (t + T_1)^{-3-\rho_{\mathrm{L}}} ( | \nabla_\xi Z_1| +  | \nabla_\xi Z_2| )     | Z_1- Z_2 | \\
\label{lemma_3_2_proof_6}
& \lesssim  M(t+T_1)^{-2 - \rho_{\mathrm{L}}}\theta_{\mathrm L}(t)  \left\| Z_1- Z_2 \right\|_{\mathcal Z}. 
 \end{align} 
Since $M$ is independent of $T_1$ and
$$
\int_0^\infty \min\{t,s\}(s+T_1)^{-2-\rho_{\mathrm L}}\theta_{\mathrm L}(s)ds\lesssim T_1^{-\rho_{\mathrm L}/2}\theta_{\mathrm L}(t),
$$
\eqref{lemma_3_2_proof_5} and \eqref{lemma_3_2_proof_6} yield
$$
\|\Phi[Z_1]-\Phi[Z_2]\|_{\mathcal Z}<\frac12\left\| Z_1- Z_2 \right\|_{\mathcal Z},
$$
provided that $T_1$ is large enough. $\Phi$ thus is a contraction on $\mathcal Z_M$ and there exists a unique solution $Z\in \mathcal Z_M$ to the equation $Z=\Phi[Z]$. 

Now $X:=Z+t\xi\in C^1([0,\infty)\times \R^n)$ is a unique solution to \eqref{lemma_3_2_proof_1} satisfying \eqref{lemma_3_2_1}. Finally, $\Xi$ defined by \eqref{lemma_3_2_proof_2} belongs to $C^1([0,\infty)\times\R^{n})$ and satisfies
$$
|\partial_\xi^\alpha (\Xi(t,\xi)-\xi)|\lesssim \int_t^\infty (s+T_1)^{-1-\rho_{\mathrm L}-|\alpha|}|\nabla_\xi X(s,\xi)|^{|\alpha|}ds\lesssim (t+T_1)^{-\rho_{\mathrm L}}
$$
for $|\alpha|\le1$, where we have used the bound  $|\nabla_\xi X(t,\xi)|\lesssim \<t\>$ which follows from \eqref{lemma_3_2_1}. 
 \end{proof}

\begin{proof}[Proof of Proposition \ref{proposition_3_1}]
The proof consists of three steps. At first, Lemma \ref{lemma_3_2} implies
\begin{align}
\label{proposition_3_1_proof_1}
|\nabla_\xi \Xi(t,\xi)-I|\lesssim (t+T_1)^{-\rho_{\mathrm{L}}}
\end{align}
on $[0,\infty)\times\R^n$. By virtue of Hadamard's inverse mapping theorem, there exists $T_1>0$ such that, for all $t\ge0$,  the map  $\R^n\in \xi\mapsto \Xi(t,\xi)$ is diffeomorphic and  has the  inverse $\xi\mapsto \eta(t,\xi)$ which satisfies the same estimate as \eqref{lemma_3_2_2}, namely 
\begin{align}
\label{proposition_3_1_proof_2}
|\partial_\xi^\alpha (\eta(t,\xi)-\xi)|\lesssim(t+T_1)^{-\rho_{\mathrm L}},\quad t\ge 0,\ \xi\in \R^n. 
\end{align}
Indeed, if $|\alpha|=0$ then
$$
|\eta(t,\xi)-\xi|=|\eta(t,\xi)-\Xi(t,\eta(t,\xi))|\lesssim (t+T_1)^{-\rho_{\mathrm L}}.
$$
Moreover, taking $T_1$ large if necessary so that $|\nabla_\xi \Xi(t)|\ge 1/2$, the inverse matrix $[\nabla_\xi \Xi(t)]^{-1}$ exists and satisfies $|\nabla_\xi \Xi(t)^{-1}-I|\lesssim (t+T_1)^{-\rho_{\mathrm L}}$ by \eqref{proposition_3_1_proof_1}. Thus, we obtain for $|\alpha|=1$ 
$$
|\partial_\xi^\alpha (\eta(t,\xi)-\xi)|=|[ (\nabla_\xi \Xi )(t,\eta)]^{-1}\partial_\xi^\alpha \xi-\partial_\xi^\alpha \xi|\lesssim (t+T_1)^{-\rho_{\mathrm L}}. 
$$

Next, we define $S(t,\xi)=\varphi(t,\eta(t,\xi))$, where 
\begin{align*}
\varphi(t, \xi) = \int_0^t\left(\frac12|\Xi(\tau,\xi)|^2+V_{T_1}(\tau,X(\tau,\xi))+X(\tau,\xi)\cdot (\partial_t \Xi)(\tau,\xi)\right)d\tau.
\end{align*} 
We shall show that $S$ satisfies
\begin{empheq}[left = {~~ \empheqlbrace \,}]{alignat = 2}
\label{proposition_3_1_proof_3}
  \partial _t S(t,\xi) &= \frac12 |\xi| ^2 +V_{T_1}(t, \nabla _{\xi} S(t,\xi)) ,  \\ 
\label{proposition_3_1_proof_4}
 \nabla_\xi S(t,\xi) &=X(t,\eta(t,\xi)) . 
\end{empheq}
It follows from \eqref{K5} that
\begin{align*}
\partial_t\partial_{\xi_j} \varphi(t)
&=\Xi(t) \cdot \partial_{\xi_j} \Xi(t)+(\nabla_x V_{T_1})(t,X(t)) \cdot \partial_{\xi_j} X(t)+\partial_{\xi_j}X(t)\cdot\partial_t \Xi(t) +X(t)\cdot \partial_t \partial_{\xi_j}\Xi(t)\\
&=\partial_t X(t)\cdot \partial_{\xi_j} \Xi(t)+X(t)\cdot \partial_t \partial_{\xi_j}\Xi(t)\\
&=\partial_t [X(t)\cdot \partial_{\xi_j} \Xi(t)],
\end{align*}
which implies $\partial_{\xi_j} \varphi(t)=X(t)\cdot \partial_{\xi_j} \Xi(t)$ since $\nabla_\xi \varphi(0)=0$. Since $\Xi(t,\eta(t,\xi))=\xi$ and hence
$$
\delta_{jk}=\partial_{\xi_j}[\Xi_k(t,\eta)]=\sum_{\ell=1}^n(\partial_{\xi_\ell}\Xi_k)(t,\eta)\partial_{\xi_j}\eta_\ell
$$
we have for $j=1,...,n$,
\begin{align*}
\partial_{\xi_j}S(t)
&=\partial_{\xi_j}[\varphi(t,\eta(t))] 
\\
&=\sum_{k,\ell=1}^nX_k(t,\eta(t))(\partial_{\xi_\ell} \Xi_k)(t,\eta(t))\partial_{\xi_j}\eta_\ell=X_j(t,\eta(t)),
\end{align*}
which implies \eqref{proposition_3_1_proof_4}. Using \eqref{proposition_3_1_proof_4}, we obtain that
$$\partial_t [S(t,\Xi(t))]=(\partial_tS)(t,\Xi(t))+X(t,\eta(t,\Xi(t))\cdot \partial_t \Xi(t)=X(t)\cdot \partial_t \Xi(t)$$
and that, since $S(t,\Xi(t))=\varphi(t,\xi)$, 
\begin{align*}
\partial_t [S(t,\Xi(t))]
&=\frac12 |\Xi(t)| ^2 +V_{T_1}(\tau,X(t,\xi))+X(t,\xi)\cdot (\partial_t \Xi)(t,\xi)\\
&=\frac12 |\Xi(t)| ^2 +V_{T_1}(\tau,\nabla_\xi S(t,\Xi(t)))+X(t,\xi)\cdot (\partial_t \Xi)(t,\xi).
\end{align*}
These two formula imply \eqref{proposition_3_1_proof_3} with $\xi$ replaced by $\Xi(t,\xi)$. Plugging $\xi=\eta(t)$ into the obtained equation, we arrive at \eqref{proposition_3_1_proof_3}. 

Finally, we shall construct $\Psi(t,x)$ by using $S(t,\xi)$. By Lemma \ref{lemma_3_2} and \eqref{proposition_3_1_proof_2}, 
\begin{align}
\nonumber
|\partial_\xi^\alpha (\nabla_\xi S(t,\xi) - t\xi)|
&\le |\partial_\xi^\alpha(X(t,\eta(t)) - t\eta(t))|+t|\partial_\xi^\alpha(\eta(t) - \xi)|\\
\label{proposition_3_1_proof_5}
&\lesssim \begin{cases}
t(t+T_1)^{-\rho_{\mathrm L}}&\text{if}\ \rho_{\mathrm L}<1,\\ \log(tT_1^{-1}+1)&\text{if}\ \rho_{\mathrm L}=1,
\end{cases}
\end{align}
for $|\alpha|=0,1$. Hence, the map $\R^n\ni \xi\mapsto t^{-1}\nabla S(t,\xi)$ is diffeomorphic  for sufficient large $T_1$ and all $t\ge0$ so that its inverse $\widetilde \Theta(t,\xi)$ satisfies $(\nabla_\xi S)(t,\widetilde \Theta(t,\xi))=t\xi$. Setting $$\Theta (t,x)=\widetilde \Theta(t,x/t),\quad x\in \R^n,\ t>0,$$ we find
\begin{align}
\label{proposition_3_1_proof_6}
(\nabla_\xi S)(t, \Theta(t,x))=x. 
\end{align}
Plugging $\xi=\Theta(t,x)$ into \eqref{proposition_3_1_proof_5} shows
\begin{align}
\label{proposition_3_1_proof_7}
\left|\Theta(t,x)-t^{-1}x\right|\lesssim 
\begin{cases}
(t+T_1)^{-\rho_{\mathrm L}}
&\text{if}\ \rho_{\mathrm L}<1,\\ t^{-1}\log(tT_1^{-1}+1)&\text{if}\ \rho_{\mathrm L}=1,
\end{cases}
\end{align}
Moreover, differentiating the both sides of \eqref{proposition_3_1_proof_6} in $x$ implies
$$
(\nabla_\xi^2S)(t,\Theta(t,x))\nabla_x \Theta(t,x)=I.
$$
Then it follows from \eqref{proposition_3_1_proof_5} that $|(\nabla_\xi^2S)(t,\Theta(t,x))|\gtrsim t$ and thus
\begin{align}
\nonumber
\left|\nabla_x \Theta(t,x)-t^{-1}I\right|
&\le \left|[(\nabla_\xi^2S)(t,\Theta(t,x))]^{-1}\right|\left|I-t^{-1}(\nabla_\xi^2S)(t,\Theta(t,x))\right|\\
\label{proposition_3_1_proof_8}
&\lesssim \begin{cases}
t^{-1}(t+T_1)^{-\rho_{\mathrm L}}
&\text{if}\ \rho_{\mathrm L}<1,\\ t^{-2}\log(tT_1^{-1}+1)&\text{if}\ \rho_{\mathrm L}=1,
\end{cases}
\end{align}
uniformly in $t\ge1$ and $x\in \R^n$ provided $T_1$ is large enough. 
Now we define
$$
\Psi(t,x):=x\cdot \Theta(t,x)-S(t,\Theta(t,x)).
$$
Then \eqref{proposition_3_1_proof_6} yields
\begin{align}
\label{proposition_3_1_proof_9}
\nabla_x\Psi=\Theta+x\cdot \nabla_x\Theta-(\nabla_\xi S)(t,\Theta)\cdot\nabla_x\Theta=\Theta,
\end{align}
which, together with \eqref{proposition_3_1_proof_3} and \eqref{proposition_3_1_proof_6}, shows
\begin{align*}
\partial _t \Psi &= x \cdot \partial_t\Theta   - (\partial _t S)(t,\Theta) - (\nabla _{\xi} S)(t,\Theta)\cdot \partial_t\Theta\\
&= - (\partial _t S)(t,\Theta)
\\ &= - \frac12 | \Theta|^2 - V_{T_1} (t, (\nabla _{\xi} S) (t, \Theta)) \\
&= - \frac{1}{2} | \nabla _x \Psi |^2 - V_{T_1} (t, x).
\end{align*}
Moreover, \eqref{proposition_3_1_proof_7}--\eqref{proposition_3_1_proof_9} implies \eqref{proposition_3_1_1} and \eqref{proposition_3_1_2}. 
\end{proof}


\section{Energy norm of $\CAL{E}(t)$.}
In this section, we estimate the energy norm of $\CAL{E}(t)$ defined in Subsection \ref{subsection_integral_equation}. Recall that the norm $\|\cdot\|_{X_{T,q,r}}$ was defined in Subsection \ref{subsection_integral_equation}. In this section, we omit the subscript $q,r$ and write simply $\|\cdot\|_{X_T}=\|\cdot\|_{X_{T,q,r}}$ for short. Throughout this section, we assume that $\gamma$ and $b$ satisfy the condition in Theorem \ref{theorem_1}. The main result in this section is as follows: 

\begin{proposition}
\label{proposition_4_1}
For sufficiently large $T\ge 1$, any admissible pair $(q,r)$ and any $\ep>0$, 
\begin{align}
\| \CAL{E}\|_{X_T} 
\label{1}
\lesssim T^{b-\min\{\gamma/2,\rho_{\mathrm{L}},\rho_{\mathrm{S}}-1,1\}+\ep} \left(1+\|\widehat{u_+}\|_{H^{\gamma}}^{\frac{2(\lceil \gamma \rceil +1)}{n}}\right)\|\widehat{u_+} \|_{H^{\gamma}}. 
\end{align}
\end{proposition}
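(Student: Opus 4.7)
The plan is to estimate $\CAL{E}_1$ and $\CAL{E}_2$ separately in the two components of the $X_T$-norm, then combine. Throughout, the key inputs are the decay of $\Psi$ from Proposition \ref{proposition_3_1}, the Strichartz estimates \eqref{strichartz_1}--\eqref{strichartz_3} for $e^{-itH}$, the Dollard decomposition \eqref{MDFM} together with \eqref{Dollard_1}, the nonlinear bounds \eqref{lemma_nonlinear_1}--\eqref{lemma_nonlinear_2} applied to $W$, and Lemmas \ref{lemma_4_2}--\ref{lemma_4_4} (which will be proved in this section and which record, respectively, a propagation estimate for $e^{-itH_0}$ on Fourier-supports bounded away from the origin, the vanishing of $\CAL{U}_1(t)W(t)$ and $\CAL{U}_2(t)W(t)$ in $L^2$ as $t\to\infty$, and the $L^1_sL^2_x$-integrability of $\CAL{C}(s)W(s)$).

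\textbf{Step 1: Analysis of $\CAL{E}_1(t)$.} This is the low-velocity remainder and no information on $\Psi$ is used. For the direct piece $(\CAL{U}_1(t)+\CAL{U}_2(t))W(t)$, I exploit unitarity of $\CAL{M}_\Psi(t),\CAL{D}(t),\F$ together with the second bound of \eqref{Dollard_1} to estimate
\[
\|\CAL{U}_1(t)W(t)\|_{L^2}\lesssim t^{-\gamma/2}\|W(t)\|_{H^\gamma},
\]
and then \eqref{lemma_nonlinear_1} to control $\|W(t)\|_{H^\gamma}$ by a polynomial in $\|\widehat{u_+}\|_{H^\gamma}$ times $(\log t)^{\lceil\gamma\rceil}$. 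For $\CAL{U}_2(t)W(t)$ I use the support assumption $\supp\widehat{u_+}\subset\{|\xi|\geq c_0\}$ together with the spatial cutoff $\chi(x/t)$ supported in $\{|x|\leq c_0t/2\}$: Lemma \ref{lemma_4_2} yields an arbitrarily fast polynomial decay in $t$. The Duhamel piece of $\CAL{E}_1$ is handled by applying \eqref{strichartz_3} with the dual endpoint pair $(\tilde q',\tilde r')=(1,2)$, reducing it to bounding $s^{-1}\|(\CAL{U}_1(s)+\CAL{U}_2(s))F(W(s))\|_{L^1_s([T,\infty);L^2_x)}$; the same two estimates, now with $F(W)$ in place of $W$ and \eqref{lemma_nonlinear_2} in place of \eqref{lemma_nonlinear_1}, yield integrability and a pointwise decay of order $t^{-\gamma/2}(\log t)^{\lceil\gamma\rceil}$. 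For the $L^qL^r$ component I apply \eqref{strichartz_3} in the same way, bounding the free-evolution piece $-\CAL{U}_j(t)W(t) = -\CAL{U}_j(t)W(t)$ by writing it as an endpoint of a Duhamel integral and using Strichartz. Altogether,
\[
\|\CAL{E}_1\|_{X_T}\lesssim T^{\,b-\gamma/2+\ep}\bigl(1+\|\widehat{u_+}\|_{H^\gamma}^{2(\lceil\gamma\rceil+1)/n}\bigr)\|\widehat{u_+}\|_{H^\gamma},
\]
where the $\ep$ absorbs the logarithmic factors produced by \eqref{lemma_nonlinear_1}--\eqref{lemma_nonlinear_2}.

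\textbf{Step 2: Analysis of $\CAL{E}_2(t)$.} This is the high-velocity remainder and is where $\Psi$ and the potential decay enter. I invoke Lemma \ref{lemma_4_4}, which gives (modulo an $\ep$-loss)
\[
\|\CAL{C}(s)W(s)\|_{L^2}\lesssim s^{-1-\min\{\rho_{\mathrm{L}},\,\rho_{\mathrm{S}}-1,\,1\}+\ep}\bigl(1+\|\widehat{u_+}\|_{H^\gamma}^{2\lceil\gamma\rceil/n}\bigr)\|\widehat{u_+}\|_{H^\gamma}.
\]
The derivation of this estimate rests on the explicit form
\[
e^{-isH}\bigl[i\partial_s,\,e^{isH}\CAL{M}_\Psi(s)\CAL{D}(s)\F\CAL{M}(s)\F^{-1}\bigr]
=\CAL{M}_\Psi(s)\Bigl\{-\partial_s\Psi-\tfrac12|\nabla\Psi|^2-V_{T_1}+i(\nabla\Psi-x/s)\cdot\nabla+i(\Delta\Psi-n/s)\Bigr\}\CAL{D}(s)\F\CAL{M}(s)\F^{-1},
\]
valid on the high-velocity region $|x|\gtrsim s$. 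The Hamilton--Jacobi equation \eqref{K8} kills the first three terms (up to $V-V_{T_1}$, whose support lies in the low-velocity region and is absorbed), while Proposition \ref{proposition_3_1} gives $|\nabla\Psi-x/s|+s|\Delta\Psi-n/s|\lesssim s^{-\rho_{\mathrm{L}}}$. The short-range contribution $V^{\mathrm{S}}+V^{\mathrm{sing}}$ produces an $s^{-\rho_{\mathrm{S}}}$ term, which after the additional factor of $s$ (from trading a spatial derivative for $\CAL{D}(s)\F$) yields $s^{-\rho_{\mathrm{S}}+1}$; combining these with the $H^\gamma$-bounds of $W$ from \eqref{lemma_nonlinear_1} produces the claimed decay rate $-1-\min\{\rho_{\mathrm{L}},\rho_{\mathrm{S}}-1,1\}$. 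Applying \eqref{strichartz_3} with $(\tilde q',\tilde r')=(1,2)$ to
\[
\CAL{E}_2(t)=-i\int_t^\infty e^{-i(t-s)H}\CAL{C}(s)W(s)\,ds
\]
and integrating this pointwise $L^2$-bound over $[t,\infty)$ produces the corresponding bound on both $t^b\|\CAL{E}_2(t)\|_{L^2}$ and $t^b\|\CAL{E}_2\|_{L^q([t,\infty);L^r)}$.

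\textbf{Step 3: Conclusion.} Adding Steps 1 and 2 and using the definition of $b$ in \eqref{b} gives \eqref{1}.

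\textbf{Main obstacle.} The delicate point is Step 2, specifically establishing the decay rate for $\|\CAL{C}(s)W(s)\|_{L^2}$ in Lemma \ref{lemma_4_4}. The direct commutator computation would require $W(s)\in H^2$ (because a term $\Delta\CAL{M}_\Psi(s)\CAL{D}(s)W$ arises), which is not available for $n=3$ under Assumption \ref{assumption_B}. The resolution, already indicated in the introduction, is to insert the factor $\F\CAL{M}(s)\F^{-1}$ inside the commutator via Lemma \ref{lemma_4_3}; after this reformulation, the operator acting on $W$ involves only first-order spatial derivatives of $W$, so $W\in H^1$ suffices to close the estimate. Once the decay rate $s^{-1-\min\{\rho_{\mathrm{L}},\rho_{\mathrm{S}}-1,1\}+\ep}$ is secured, Strichartz and the nonlinear estimates of Lemma \ref{lemma_nonlinear} assemble the result routinely.
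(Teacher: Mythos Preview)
Your plan is essentially the paper's: split $\CAL{E}=\CAL{E}_1+\CAL{E}_2$, feed Lemma~\ref{lemma_4_2} and the nonlinear bounds into Lemma~\ref{lemma_4_3} for $\CAL{E}_1$, and use the commutator computation together with Proposition~\ref{proposition_3_1} in Lemma~\ref{lemma_4_4} for $\CAL{E}_2$. Two small corrections are needed. First, $\CAL{U}_2(t)W(t)$ does \emph{not} enjoy arbitrarily fast polynomial decay under the regularity available ($W\in H^\gamma$ only); the paper instead rewrites $\CAL{U}_2(t)f=\chi_t\CAL{M}_\Psi\CAL{D}(t)f+\chi_t\CAL{M}_\Psi\CAL{D}(t)\F\{\CAL{M}(t)-1\}\F^{-1}f$, observes the first term vanishes identically for $t>2T_1$ (since $\chi_t\CAL{D}(t)=\CAL{D}(t)\chi$ and $\chi \widehat{u_+}\equiv0$), and bounds the second exactly as $\CAL{U}_1$, yielding the same $t^{-\gamma/2+\ep}$ rate. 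Second, for the $L^q([t,\infty);L^r)$-component of the direct terms $-\CAL{U}_j(t)W(t)$, your proposal to ``write it as an endpoint of a Duhamel integral and use Strichartz'' does not work: these terms are not of the form $e^{-itH}(\cdot)$ or $\int e^{-i(t-s)H}(\cdot)\,ds$. The paper handles this by estimating $\|\CAL{U}_j(t)f\|_{L^r}$ pointwise in $t$ via the scaling of $\CAL{D}(t)$ and Hausdorff--Young (equation~\eqref{lemma_4_2_proof_1}), then taking the $L^q$-norm in $t$ directly. With these fixes your argument goes through.
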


The proof  is decomposed into several parts. We begin with the estimates for $\CAL{U}_1(t)$ and $\CAL{U_2}(t)$.

\begin{lemma}
\label{lemma_4_2}
Let $2b<\delta\le 2$, $T>2T_1$, $\ep>0$ and  $(q,r)$ be admissible. Then
\begin{align}
\label{lemma_4_2_1}
\|\CAL{U}_1(t)f\|_{X_T}\lesssim T^{b-\delta/2+\ep}\|f\|_{H^\delta}.
\end{align}
for all $f\in H^\delta$. Moreover, if in addition $\supp f\subset \{x\in \R^n\ |\ |x|\ge c_0\}$, then
\begin{align}
\label{lemma_4_2_2}
\|\CAL{U}_2(t)f\|_{X_T}\lesssim T^{b-\delta/2+\ep}\|f\|_{H^{\delta}}.
\end{align}
\end{lemma}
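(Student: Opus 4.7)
The proof of both inequalities reduces to three ingredients: (i) the Dollard scaling identity $\|\CAL{D}(s)g\|_{L^r} = s^{-2/q}\|g\|_{L^r}$ for admissible $(q,r)$; (ii) the elementary pointwise bound $|1-e^{i\theta}|\le C|\theta|^{\delta/2}$ for $0\le\delta\le 2$, which gives $|1-\CAL{M}(s)(x)|\lesssim s^{-\delta/2}|x|^\delta$; and (iii) the identity $\|\<x\>^\delta\SCR{F}^{-1}f\|_{L^2}\sim \|f\|_{H^\delta}$. The phase $\CAL{M}_\Psi(s)$ plays no role beyond $|\CAL{M}_\Psi(s)|=1$.

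For $\CAL{U}_1$, the unitarity of $\CAL{M}_\Psi(s)$, $\CAL{D}(s)$, $\SCR{F}$ on $L^2$ gives $\|\CAL{U}_1(s)f\|_{L^2} = \|(1-\CAL{M}(s))\SCR{F}^{-1}f\|_{L^2}\lesssim s^{-\delta/2}\|f\|_{H^\delta}$ by (ii) and Plancherel, so $\sup_{t\ge T} t^b\|\CAL{U}_1(t)f\|_{L^2}\lesssim T^{b-\delta/2}\|f\|_{H^\delta}$ (using $b<\delta/2$). For the Strichartz part of $\|\cdot\|_{X_T}$, first factor the scaling via (i) to obtain $\|\CAL{U}_1(s)f\|_{L^r}=s^{-2/q}\|\SCR{F}(1-\CAL{M}(s))\SCR{F}^{-1}f\|_{L^r}$, then apply the Sobolev embedding $L^r\hookleftarrow H^{2/q}$ (which introduces at most an $s^\ep$ loss at the borderline pair $(q,r)=(4,\infty)$ in $n=1$), Plancherel, and (ii) with the sharpened exponent $\beta=\delta-2/q\in[0,2]$ (permissible since $\delta>n/2\ge 2/q$). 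The net pointwise bound $\|\CAL{U}_1(s)f\|_{L^r}\lesssim s^{-\delta/2-1/q+\ep}\|f\|_{H^\delta}$, integrated in $s\in[t,\infty)$, yields $\|\CAL{U}_1(\cdot)f\|_{L^q([t,\infty);L^r)}\lesssim t^{-\delta/2+\ep}\|f\|_{H^\delta}$, which after multiplying by $t^b$ and taking the supremum produces the claimed $T^{b-\delta/2+\ep}\|f\|_{H^\delta}$.

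For $\CAL{U}_2$, the support hypothesis on $f$ becomes essential. Since $\chi(x/s)=0$ on $|x/s|\ge c_0/2$ while $f$ is supported in $\{|\xi|\ge c_0\}$, the phase gradient $x-s\xi$ of the Fourier integral $e^{-isH_0}\SCR{F}^{-1}f(x)=(2\pi)^{-n}\int e^{i(x\cdot\xi-s|\xi|^2/2)}f(\xi)d\xi$ has modulus at least $c_0 s/2$ on the relevant region. Non-stationary phase integration by parts with $L=(i|x-s\xi|^2)^{-1}(x-s\xi)\cdot\nabla_\xi$ yields $s^{-M}$ decay while transferring $M$ derivatives onto $f$; real interpolation between the trivial $L^2$-boundedness and $M=\lceil\delta\rceil$ recovers $\|\chi_s e^{-isH_0}\SCR{F}^{-1}f\|_{L^2}\lesssim s^{-\delta/2}\|f\|_{H^\delta}$, which handles the weighted $L^2$-part since $|\CAL{M}_\Psi(s)\CAL{M}(-s)|=1$. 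The Strichartz part proceeds in parallel: the MDFM decomposition together with the Dollard scaling (i) writes $\|\CAL{U}_2(s)f\|_{L^r}$ as $s^{-2/q}$ times the $L^r$-norm of the localized quantity $\chi\cdot\SCR{F}[\CAL{M}(s)\SCR{F}^{-1}f]$, to which the same Sobolev plus non-stationary phase argument applies.

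The main technical difficulty is the Strichartz half of each estimate for admissible pairs other than $(\infty,2)$. One has to carefully balance the Dollard scaling factor $s^{-2/q}$ against a fractional-power decay $s^{-(\delta-2/q)/2}$, obtained from either the pointwise control on $1-\CAL{M}(s)$ or from non-stationary phase, in order for the pointwise exponent $-\delta/2-1/q$ produced after Sobolev embedding to integrate in time to a quantity matching the weighted $L^2$-bound. The small $\ep$ loss in the statement absorbs the borderline failure of the Sobolev embedding $H^{n/2}\hookrightarrow L^\infty$, which occurs only at the single admissible endpoint $(q,r)=(4,\infty)$ in dimension $n=1$.
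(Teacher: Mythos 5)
Your treatment of $\CAL{U}_1$ is essentially the paper's own argument: factor out the Dollard scaling $\|\CAL{D}(t)g\|_{L^r}=|t|^{-2/q}\|g\|_{L^r}$, use $|1-\CAL{M}(t)|\lesssim (|x|^2/t)^{\beta/2}$ with the reduced exponent $\beta=\delta-2/q$, and convert the resulting weighted quantity into $\|f\|_{H^\delta}$; the paper does the last step by Hausdorff--Young plus a weighted H\"older inequality (whence its uniform $\ep$-loss), which is just another proof of the Sobolev-type embedding you invoke, and your remark that the genuine borderline is only $(q,r)=(4,\infty)$ in $n=1$ is consistent with this.

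For $\CAL{U}_2$, however, you are doing heavy analytic work where the paper does none, and the step you rely on is exactly where your sketch is not justified. The paper splits $\F\CAL{M}(t)\F^{-1}=1+\F(\CAL{M}(t)-1)\F^{-1}$; the contribution of the identity is $\chi(x/t)\CAL{M}_\Psi(t)\CAL{D}(t)f=\CAL{M}_\Psi(t)\CAL{D}(t)(\chi f)\equiv 0$, because $\chi$ is supported in $\{|x|\le c_0/2\}$ while $\supp f\subset\{|x|\ge c_0\}$, so the support hypothesis is used only through this exact vanishing and the remainder is estimated verbatim as $\CAL{U}_1$. You instead estimate $\chi_t e^{-itH_0}\F^{-1}f$ by non-stationary phase. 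That route can in principle be completed (it is the standard minimal-velocity estimate $\|\chi_t e^{-itH_0}\F^{-1}f\|_{L^2}\lesssim t^{-M}\|f\|_{H^M}$ for frequency-localized data), but your chain ``pointwise IBP gives $t^{-M}$ while transferring $M$ derivatives onto $f$, then interpolate with $M=\lceil\delta\rceil$'' glosses the passage from a pointwise bound on the integrand to an $L^2_x$ bound with constant $t^{-M}$. If one does this naively (bounding the oscillatory integral pointwise and paying the volume of $\supp\chi_t$), the integer-order estimate degrades to $t^{n/2-M}\|f\|_{H^M}$, and interpolation with $M=\lceil\delta\rceil\le 2$ in $n=3$ only yields $t^{-\delta/4}$; since $b>n/4\ge \delta/4$ this is not enough to produce the claimed $T^{b-\delta/2+\ep}$, so the numerology of your interpolation step fails unless you first prove the loss-free operator bound (e.g.\ via an $L^2$-boundedness argument for the integrated-by-parts oscillatory integral operator, keeping the gain $|x-t\xi|^{-1}\lesssim (t\<\xi\>)^{-1}$). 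The same remark applies to your Strichartz half, where ``the same Sobolev plus non-stationary phase argument'' would require differentiating the localized oscillatory quantity. In short: your $\CAL{U}_2$ argument has a genuine gap as written, and in any case the intended estimate follows with no oscillatory-integral analysis at all once one notices the exact cancellation $\chi(x/t)\CAL{D}(t)f=\CAL{D}(t)(\chi f)=0$.
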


\begin{proof}Since $\frac 2q=n(\frac12-\frac1r)$, we obtain by \eqref{Dollard_1} and the Hausdorff-Young inequality that
\begin{align}
\nonumber
\|\CAL{U}_1(t)f\|_{L^r}
&\le \|\CAL{D}(t)\F\{1-\CAL{M}(t)\}\F^{-1}f\|_{L^r}\\
\nonumber
&\le |t|^{-n(1/2-1/r)}\|\F\{1-\CAL{M}(t)\}\F^{-1}f\|_{L^r}\\
\nonumber
&\lesssim |t|^{-2/q-\delta/2+1/q+\ep/2} \|\<x\>^{2/q+\ep}|x|^{\delta-2/q-\ep} \F^{-1}f\|_{L^2}\\
\label{lemma_4_2_proof_1}
&\lesssim |t|^{-1/q-\delta/2+\ep/2}\|f\|_{H^{\delta}}
\end{align}
and \eqref{lemma_4_2_1} follows by taking the $L^q([t,\infty))$-norm, multiplying $|t|^b$ and taking the supremum over $t\ge T$ of the both sides of \eqref{lemma_4_2_proof_1}. To show \eqref{lemma_4_2_2}, we write
$$
\CAL{U_2}(t)f
=\chi(x/t)\CAL{M}_{\Psi}(t)\CAL{D}(t) f+\chi(x/t)\CAL{M}_{\Psi}(t)\CAL{D}(t)\F\{\CAL{M}(t)-1\}\F^{-1}f,
$$
where the second term can be dealt by the same argument as for $\CAL{U}_1(t)$ since $|\chi(x/t)\CAL{M}_{\Psi}(t)|\le 1$. Moreover, the first term vanishes identically 
since 
$
\chi(x/t)\CAL{D}(t) f=\CAL{D}(t)\chi f
$ and $\chi f\equiv0$ by the support properties of $\chi$ and $f$. Thus \eqref{lemma_4_2_2} follows. 
\end{proof}


Recall that $\CAL{E}=\CAL{E}_1+\CAL{E}_2$. Proposition \ref{proposition_4_1} then is an immediate consequence of the following two lemmas. In what follows we set for short 
\begin{align}
\label{Gamma}
\Gamma_a(u_+)=\left(1+\|\widehat{u_+}\|_{H^{\gamma}}^{\frac{2a}{n}}\right)\|\widehat{u_+} \|_{H^{\gamma}}.
\end{align}

\begin{lemma}\label{lemma_4_3}
Let $T>2T_1$, $\ep>0$ and $(q,r)$ be admissible. Then 
\begin{align*} 
\|\CAL{E}_1(t)\|_{X_T}\lesssim T^{b-\gamma/2+\ep}\Gamma_{\lceil \gamma \rceil+1}(u_+).
\end{align*}
\end{lemma}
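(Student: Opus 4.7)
The plan is to split $\CAL{E}_1(t)$ into its boundary piece $-(\CAL{U}_1(t)+\CAL{U}_2(t))W(t)$ and the Duhamel integral piece, and to bound each in the $X_T$--norm by reducing to Lemma \ref{lemma_4_2} with $\delta=\gamma$, together with the nonlinear Sobolev estimates \eqref{idea_2}. Note that the condition $2b<\gamma$ (part of \eqref{b}) and $\gamma\le 1+2/n\le 2$ makes $\delta=\gamma$ admissible in Lemma \ref{lemma_4_2}. Throughout, we use the crucial fact $\supp W(s)=\supp F(W(s))=\supp\widehat{u_+}\subset\{|x|\ge c_0\}$, which comes from Assumption \ref{assumption_B} and the form of $W(t,x)=e^{-i\nu|\widehat{u_+}|^{2/n}\log t}\widehat{u_+}$; this is what lets us invoke \eqref{lemma_4_2_2} for the $\CAL{U}_2$ pieces.

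First I would handle the boundary term. Applying Lemma \ref{lemma_4_2} with $\delta=\gamma$ (note $W(t)$ meets the support hypothesis for $\CAL{U}_2$), and then invoking \eqref{idea_2} gives
\[
\|(\CAL{U}_1+\CAL{U}_2)(t)W(t)\|_{L^r}\lesssim t^{-1/q-\gamma/2+\ep/2}\|W(t)\|_{H^\gamma}\lesssim (\log t)^{\lceil\gamma\rceil}\,t^{-1/q-\gamma/2+\ep/2}\,\Gamma_{\lceil\gamma\rceil}(u_+),
\]
so that the logarithmic factor is absorbed into an arbitrarily small power $t^{\ep/2}$. Taking the $L^q([t,\infty))$--norm in time and then $\sup_{t\ge T}t^b(\cdot)$ gives the $X_T$--bound $T^{b-\gamma/2+\ep}\Gamma_{\lceil\gamma\rceil}(u_+)$ for this piece; the $L^2$ component at time $t$ is handled analogously by the $r=2$, $q=\infty$ case of the pointwise estimate in the proof of Lemma \ref{lemma_4_2}.

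For the Duhamel piece, I would use the retarded Strichartz estimate \eqref{strichartz_3} with the dual endpoint pair $(\tilde q',\tilde r')=(1,2)$ together with the trivial $L^2$ unitary bound $\|e^{-i(t-s)H}\|_{L^2\to L^2}=1$. This reduces matters to estimating
\[
\int_t^\infty \|(\CAL{U}_1(s)+\CAL{U}_2(s))F(W(s))\|_{L^2}\,\frac{ds}{s}.
\]
Since $\supp F(W(s))\subset\{|x|\ge c_0\}$, Lemma \ref{lemma_4_2} with $(q,r)=(\infty,2)$ and $\delta=\gamma$ gives
\[
\|(\CAL{U}_1(s)+\CAL{U}_2(s))F(W(s))\|_{L^2}\lesssim s^{-\gamma/2+\ep/2}\|F(W(s))\|_{H^\gamma}\lesssim s^{-\gamma/2+\ep}\,\Gamma_{\lceil\gamma\rceil+1}(u_+),
\]
where the second inequality uses the second estimate in \eqref{idea_2} and again absorbs the log factor. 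Since $\gamma\ge 1>0$, integrating $s^{-1-\gamma/2+\ep}$ over $[t,\infty)$ yields a factor $t^{-\gamma/2+\ep}$; multiplying by $t^b$ and taking the supremum recovers the desired $T^{b-\gamma/2+\ep}\Gamma_{\lceil\gamma\rceil+1}(u_+)$ bound, both for the $L^2$ component and for the $L^q_t L^r_x$ component via \eqref{strichartz_3}.

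I do not anticipate a serious obstacle: the argument is a bookkeeping exercise combining Lemma \ref{lemma_4_2}, the retarded Strichartz estimate, and the nonlinear Sobolev bounds \eqref{idea_2}. The only subtle point is to note that $\|W(t)\|_{H^\gamma}$ and $\|F(W(t))\|_{H^\gamma}$ have only logarithmic time growth (so they can be absorbed into the $\ep$ in Lemma \ref{lemma_4_2}), and that the support condition $\supp \widehat{u_+}\subset\{|\xi|\ge c_0\}$ from Assumption \ref{assumption_B} is precisely what allows the $\CAL{U}_2$ contribution to be treated on equal footing with the $\CAL{U}_1$ contribution via \eqref{lemma_4_2_2}.
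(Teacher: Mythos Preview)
Your proposal is correct and follows essentially the same approach as the paper: split $\CAL{E}_1$ into the boundary term and the Duhamel term, apply the pointwise-in-time estimate \eqref{lemma_4_2_proof_1} (with $\delta=\gamma$) from the proof of Lemma \ref{lemma_4_2} together with the nonlinear bounds on $\|W(t)\|_{H^\gamma}$ and $\|F(W(t))\|_{H^\gamma}$, and absorb the logarithmic growth into the $\ep$. If anything, you are slightly more careful than the paper in that you invoke the pointwise $L^r$ bound \eqref{lemma_4_2_proof_1} directly rather than citing the $X_T$--statement of Lemma \ref{lemma_4_2}, which strictly speaking is formulated for time-independent $f$.
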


\begin{proof}
By Lemma \ref{lemma_nonlinear}, we have 
\begin{align*}
\|W(t)\|_{H^{\gamma}}
&\lesssim  \left( 1 + (\log t)^{\lceil \gamma \rceil} 
\left\| \widehat{u_+} \right\|_{H^{\gamma}}^{\frac{2\lceil \gamma \rceil}{n}} \right) \left\| \widehat{u_+} \right\|_{H^{\gamma}}
\lesssim (\log t)^{\lr{\gamma}}\Gamma_{\lceil \gamma \rceil}(u_+),\\
\|F(W(t))\|_{H^{\gamma}}&\lesssim (\log t)^{\lr{\gamma}}\Gamma_{\lceil \gamma \rceil+1}(u_+). 
\end{align*}
These two estimates and Lemma \ref{lemma_4_2} with $\delta=\gamma$ show, for $j=1,2$ and $\ep'<\ep$
$$
\|\CAL{U}_j(t)W(t)\|_{X_T}\lesssim T^{b-\gamma/2+\ep'}(\log T)^{\lr{\gamma}}\Gamma_{\lceil \gamma \rceil+1}(u_+)
\lesssim T^{b-\gamma/2+\ep}\Gamma_{\lceil \gamma \rceil+1}(u_+).
$$
Similarly, we know by \eqref{strichartz_3} and \eqref{lemma_4_2_proof_1} with $(q,r)=(\infty,2)$ that
\begin{align*}
\left\| 
\int_t^{\infty} e^{-i (t-s) H} \CAL{U}_j(s) \frac{F(W(s))}{s} ds\right\|_{X_T} 
& \lesssim \sup_{t\ge T}t^b\int_{t}^{\infty} s^{-1}\| \CAL{U}_j(s)F(W(s))\|_{L^2} ds\\
& \lesssim \sup_{t\ge T}t^b\int_{t}^{\infty} s^{-1-\gamma/2+\ep/2}\|F(W(s))\|_{H^{\gamma}} ds\\
&\lesssim T^{b- \gamma/2+\ep} \Gamma_{\lceil \gamma \rceil+1}(u_+)
\end{align*}
for $j=1,2$ and the lemma thus follows. 
\end{proof}

\begin{lemma}\label{lemma_4_4}
For sufficiently large $T$ and any $\ep>0$, $e^{-itH}[i\partial_t,e^{itH}\CAL{U}_3(t)]$ defined on $C_0^\infty((T,\infty)\times \R^{n})$ extends to a bounded operator 
$
\CAL{C}\in \mathbb B(L^\infty([T,\infty);H^{1}),L^1([T,\infty);L^2))
$
which satisfies
\begin{align}
\label{lemma_4_4_2}
\|\CAL{C}(t)f\|_{L^1([T,\infty);L^2)}\lesssim T^{-\min\{\rho_{\mathrm{L}},\rho_{\mathrm{S}}-1,1\}+\ep}\|f\|_{L^\infty([T,\infty);H^{1})}. 
\end{align}
Moreover, for any admissible pair $(q,r)$,
\begin{align} 
\label{K20} 
\|\CAL{E}_2\|_{X_T}  
\lesssim  T^{b-\min\{\rho_{\mathrm{L}},\rho_{\mathrm{S}}-1,1\}+\ep}\Gamma_{1}(u_+).
\end{align}
\end{lemma}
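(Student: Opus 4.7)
The plan is to reduce \eqref{K20} to an integrated pointwise bound on $\|\CAL{C}(t)W(t)\|_{L^2}$ via the retrograde Strichartz estimate \eqref{strichartz_3}, and to reduce the pointwise bound to a case-by-case analysis after a Hamilton--Jacobi cancellation. First, for $t$-independent test functions I identify $\CAL{C}(t)f = (i\partial_t - H)[\CAL{U}_3(t)f]$, and with $\CAL{U}_3(t) = (1-\chi_t)\CAL{A}(t)\F^{-1}$, $\CAL{A}(t) = \CAL{M}_\Psi(t)\CAL{D}(t)\F\CAL{M}(t) = \CAL{M}_\Psi(t)\CAL{M}(-t)e^{-itH_0}$, applying the Leibniz rule and using that $V$ commutes with $\chi_t$ yields
\begin{equation*}
\CAL{C}(t)f = -i(\partial_t\chi_t)\CAL{A}(t)\F^{-1}f + (1-\chi_t)\bigl[(i\partial_t-H)\CAL{A}(t)\bigr]\F^{-1}f - [\chi_t,H_0]\CAL{A}(t)\F^{-1}f.
\end{equation*}

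The key algebraic identity comes from computing $(i\partial_t - H)\CAL{A}(t)$: using the conjugation formula $e^{-i\Psi}H_0 e^{i\Psi} = H_0 - i\nabla\Psi\cdot\nabla + \tfrac{1}{2}|\nabla\Psi|^2 - \tfrac{i}{2}\Delta\Psi$ together with the identity $(i\partial_t - H_0)\CAL{B}(t) = -i(x/t)\cdot\nabla\CAL{B}(t) - \tfrac{in}{2t}\CAL{B}(t)$, which I derive from $(i\partial_t - H_0)e^{-itH_0} = 0$ and $\CAL{B}(t) = \CAL{M}(-t)e^{-itH_0}$, the second-order terms cancel and one arrives at
\begin{equation*}
(i\partial_t-H)\CAL{A}(t) = \CAL{M}_\Psi\Bigl\{(V_{T_1}-V) + i\bigl(\nabla\Psi-\tfrac{x}{t}\bigr)\cdot\nabla + \tfrac{i}{2}\bigl(\Delta\Psi-\tfrac{n}{t}\bigr)\Bigr\}\CAL{B}(t),
\end{equation*}
after applying the Hamilton--Jacobi equation \eqref{K8}. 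Since $V_{T_1}-V = -V^{\mathrm{S}} - V^{\mathrm{sing}} - V^{\mathrm{L}}\chi(2x/(t+T_1))$, choosing $T$ large enough to push $\supp(1-\chi_t)$ off $\supp V^{\mathrm{sing}}$ eliminates the singular contribution.

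For the pointwise $L^2$-bound on $\CAL{C}(t)W(t)$ I estimate each remaining piece using: Proposition \ref{proposition_3_1} to bound $|\nabla\Psi - x/t|\lesssim\langle t\rangle^{-\rho_{\mathrm{L}}}$ and $|\Delta\Psi - n/t|\lesssim\langle t\rangle^{-1-\rho_{\mathrm{L}}}$; the identity $\nabla\CAL{B}(t) = t^{-1}\CAL{D}(t)\nabla\F\CAL{M}(t)$, which gives $\|\nabla\CAL{B}(t)\F^{-1}W\|_{L^2}\lesssim t^{-1}\|W\|_{H^1}$; the decay $|V^{\mathrm{S}}(x)|\lesssim t^{-\rho_{\mathrm{S}}}$ on $\supp(1-\chi_t)\subset\{|x|\ge c_0 t/4\}$; and the Dollard-error bound $\|\F(\CAL{M}(t)-1)\F^{-1}f\|_{L^2}\lesssim t^{-\delta/2}\|f\|_{H^\delta}$ coming from \eqref{Dollard_1}. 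The crucial observation for the cut-off--sensitive terms (the $\partial_t\chi_t$ and $[\chi_t,H_0]$ pieces, and the intermediate-region contribution $V^{\mathrm{L}}\chi(2x/(t+T_1))$) is that each of $\chi_t,\partial_t\chi_t,\nabla\chi_t,\chi(2x/(t+T_1))$ localizes to $|x/t|\in[c_0/4,c_0/2]$, which is disjoint from $\supp\widehat u_+\subset\{|\xi|\ge c_0\}$; hence the main Dollard piece $\CAL{D}(t)W$ vanishes there, and only the Dollard error $\CAL{D}(t)\F(\CAL{M}(t)-1)\F^{-1}W$ survives, supplying the extra gain. Finally, Lemma \ref{lemma_nonlinear} provides the Sobolev bound $\|W(t)\|_{H^s}\lesssim (\log t)^{\lceil\gamma\rceil}\Gamma_{\lceil\gamma\rceil}(u_+)$ for $s\le\gamma$.

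The main obstacle is the $V^{\mathrm{L}}\chi(2x/(t+T_1))$ contribution, which sits in a thin annulus of width $O(T_1)$ inside the cut-off region: a naive estimate produces only the rate $t^{-\rho_{\mathrm{L}}}$, non-integrable when $\rho_{\mathrm{L}}\le 1$. Closing the estimate requires simultaneously exploiting the annular thinness and the Dollard vanishing to upgrade the local $L^2$-norm of $\CAL{B}(t)\F^{-1}W$ on this region to the Dollard-error bound, thereby inserting the missing $t^{-\delta/2}$ factor. Collecting every contribution yields $\|\CAL{C}(t)W(t)\|_{L^2}\lesssim t^{-1-\min\{\rho_{\mathrm{L}},\rho_{\mathrm{S}}-1,1\}+\epsilon}\Gamma_{\lceil\gamma\rceil}(u_+)$ up to logarithmic factors, whose integration delivers \eqref{lemma_4_4_2}; a density argument then extends $\CAL{C}$ to the claimed bounded operator from $L^\infty([T,\infty);H^1)$ to $L^1([T,\infty);L^2)$. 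Finally, for \eqref{K20}, the $L^\infty([T,\infty);L^2)$ part of $\|\CAL{E}_2\|_{X_T}$ follows from unitarity of $e^{-i(t-s)H}$ applied to the pointwise bound, while the $L^q([T,\infty);L^r)$ part follows from the retrograde Strichartz estimate \eqref{strichartz_3}.
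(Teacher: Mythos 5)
Your skeleton is the right one and matches the paper up to the Hamilton--Jacobi cancellation: writing $\CAL{C}(t)f=(i\partial_t-H)[(1-\chi_t)\CAL{M}_\Psi(t)\CAL{M}(-t)e^{-itH_0}\F^{-1}f]$, cancelling $V_{T_1}$ via \eqref{K8} (your $\tfrac i2(\Delta\Psi-\tfrac nt)$ is the correct coefficient), bounding the transport terms by Proposition \ref{proposition_3_1} together with $\nabla\CAL{D}(t)\F\CAL{M}(t)\F^{-1}=t^{-1}\CAL{D}(t)\F\CAL{M}(t)\F^{-1}\nabla$, killing $V^{\mathrm{sing}}$ by support, bounding $V^{\mathrm{S}}$ by $\<t\>^{-\rho_{\mathrm{S}}}$, and reducing \eqref{K20} to \eqref{strichartz_3}. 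The genuine gap is in how you treat everything carrying a cut-off derivative. Your mechanism (disjointness of the annulus $|x/t|\sim c_0$ from $\supp\widehat{u_+}$, so that only the Dollard error of \eqref{Dollard_1} survives) cannot prove the operator bound \eqref{lemma_4_4_2}: the extension $\CAL{C}$ must act on arbitrary $f\in L^\infty([T,\infty);H^1)$, which has no Fourier-support restriction, and then your terms $-i(\partial_t\chi_t)\CAL{A}(t)\F^{-1}f$ and $[\chi_t,H_0]\CAL{A}(t)\F^{-1}f$ are only $O(t^{-1})\norm{f}_{L^2}$, not integrable on $[T,\infty)$. What you are missing is the paper's algebraic cancellation: after commuting $\nabla$ through $\CAL{M}_\Psi(t)$, the piece $-i\tfrac{x}{t^2}\cdot(\nabla\chi)_t$ coming from $\partial_t\chi_t$ combines with $\tfrac it(\nabla\chi)_t\cdot\nabla\Psi$ into $\tfrac it(\nabla\chi)_t\cdot(\nabla\Psi-\tfrac xt)=O(\<t\>^{-1-\rho_{\mathrm{L}}})$ by Proposition \ref{proposition_3_1}, the remainder being $O(\<t\>^{-2})\norm{f}_{H^1}$; no support information on $f$ is used. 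The same objection applies to the term you yourself call the main obstacle, $(1-\chi_t)V^{\mathrm{L}}\chi(2x/(t+T_1))$: you never actually estimate it, and the sketched fix does not close quantitatively even for $f=W$, since the Dollard gain is at most $t^{-\gamma/2}$ with $\gamma<2$ ($n=2,3$), giving $\|\CAL{C}(t)W(t)\|_{L^2}\lesssim t^{-\rho_{\mathrm{L}}-\gamma/2}$ and hence $\|\CAL{E}_2\|_{X_T}\lesssim T^{b+1-\rho_{\mathrm{L}}-\gamma/2}$, which diverges for, e.g., $n=2$, $\gamma=1.2$, $\rho_{\mathrm{L}}=0.62$, $b=0.55$. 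The intended resolution is structural, not analytic: the inner cut-off in \eqref{K17} is designed precisely so that $V_{T_1}=V^{\mathrm{L}}$ on $\supp(1-\chi_t)$ for $t\ge 2T_1$ (up to harmonizing the scales of the two cut-offs), so the Hamilton--Jacobi cancellation removes the entire long-range potential on that region and no residual $V^{\mathrm{L}}$ term is left to estimate.

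A smaller but real point: your closing ``density argument'' is not available as stated, because $C_0^\infty((T,\infty)\times\R^n)$ is not dense in $L^\infty([T,\infty);H^1)$. The paper circumvents this by first extending $e^{-itH}[i\partial_t,e^{itH}\CAL{U}_3(t)]\<t\>^{\ep/2}$ on $L^{3/\ep}([T,\infty);H^1)$, where density does hold, and then setting $\CAL{C}(t)=\CAL{B}(t)\<t\>^{-\ep/2}$; this is exactly the origin of the $\ep$-loss in \eqref{lemma_4_4_2}, and your proposal needs this (or an equivalent device) to define $\CAL{C}$ on $L^\infty([T,\infty);H^1)$ at all.
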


\begin{proof}
We first calculate $e^{-itH}[i\partial_t,e^{itH}\CAL{U}_3(t)]f$ for $f\in C_0^\infty((T,\infty)\times \R^{n})$ and $t\ge T$. Recall that
$$
\CAL{U}_3(t)=(1-\chi_t)U_{\Psi}(t)\F^{-1}=(1-\chi_t)M_\Psi(t)\CAL{M}(-t)e^{-itH_0}\F^{-1},
$$
where $\chi_t(x):=\chi(x/t)$. Since $V^{\mathrm{L}}(x)=V_{T_1}(t,x)$ on $\supp(1-\chi_t)$ for $t\ge 2T_1$, we obtain 
\begin{align}
e^{-itH}i\partial_t e^{itH}(1-\chi_t)U_{\Psi}
\label{lemma_4_4_proof_1}
=(1-\chi_t)(i\partial_t-H_0-V_{T_1})U_{\Psi}-[i\partial_t-H_0,\chi_t] U_{\Psi}-(1-\chi_t)V^{\mathrm{S}}U_{\Psi},
\end{align}
where we have also used the assumption that $\supp V^{\mathrm{sing}}$ is compact to obtain $(1- \chi_t)V^{\mathrm{sing}}\equiv 0$ for sufficiently large $T$. Since
\begin{align*}
(i\partial_t-H_0)\CAL{M}_\Psi(t)
&=\CAL{M}_\Psi(t)\left\{-\partial_t\Psi-\frac12|\nabla\Psi|^2+i(\nabla\Psi)\cdot\nabla +\frac{i}2\Delta\Psi+i\partial_t-H_0\right\},\\
i\partial_t\CAL{M}(-t)e^{-itH_0}&=\CAL{M}(-t)\left\{-\frac{|x|^2}{2t^2}+H_0\right\}e^{-itH_0}+\CAL{M}(-t)e^{-itH_0}i\partial_t\\
&=\left\{-i\frac xt\cdot\nabla-i\frac n{2t}+H_0\right\}\CAL{M}(-t)e^{-itH_0}+\CAL{M}(-t)e^{-itH_0}i\partial_t, 
\end{align*}
we find by using the equation \eqref{K8} that
\begin{align}
\nonumber&(i\partial_t-H_0-V_{T_1})U_{\Psi}\\
\nonumber
&=\CAL{M}_\Psi(t)\left\{-\partial_t\Psi-\frac12|\nabla\Psi|^2-V_{T_1}+i(\nabla\Psi)\cdot\nabla +\frac{i}2\Delta\Psi+i\partial_t-H_0\right\}\CAL{M}(-t)e^{-itH_0}\\
&=i\CAL{M}_\Psi(t)A_{\Psi}(t)\CAL{M}(-t)e^{-itH_0}+U_\Psi i\partial_t
\label{lemma_4_4_proof_2}
=i\CAL{M}_\Psi(t)A_{\Psi}(t)\CAL{D}(t)\F\CAL{M}(t)+U_\Psi i\partial_t,
\end{align}
where 
$$
A_{\Psi}(t)=\left(\nabla \Psi- \frac{x}{t} \right) \cdot \nabla + \frac12 \left( 
\Delta \Psi- \frac{n}{t}\right). 
$$
On $C_0^\infty((T,\infty)\times \R^{n})$, one can open the commutator $[i\partial_t,e^{itH}\CAL{U}_3(t)]$, so \eqref{lemma_4_4_proof_1} and \eqref{lemma_4_4_proof_2} imply
\begin{align}
\nonumber
e^{-itH}[i\partial_t,e^{itH}\CAL{U}_3(t)]f
&=i(1-\chi_t)\CAL{M}_\Psi(t)A_{\Psi}(t)\CAL{D}(t)\F\CAL{M}(t)\F^{-1}f\\
\label{lemma_4_4_proof_3}
&-[i\partial_t-H_0,\chi_t]U_{\Psi}(t)\F^{-1}f-(1-\chi_t)V^{\mathrm{S}}U_{\Psi}(t)\F^{-1}f.
\end{align}

Next, we shall estimate the RHS of \eqref{lemma_4_4_proof_3}. To deal with the first term, we use the formula $$\nabla \CAL{D}(t)\F\CAL{M}(t)\F^{-1}=t^{-1}\CAL{D}(t)\F\CAL{M}(t)\F^{-1}\nabla$$ and Proposition \ref{proposition_3_1} to obtain, for any $\ep>0$, 
\begin{align}
\nonumber
&\|(1-\chi_t)\CAL{M}_\Psi(t)A_{\Psi}(t)\CAL{D}(t)\F\CAL{M}(t)\F^{-1}f(t,\cdot)\|_{L^2}\\
\nonumber
&\lesssim \<t\>^{-\min\{\rho_{\mathrm{L}},1\}+\ep/2}\|\nabla \CAL{D}(t)\F\CAL{M}(t)\F^{-1}f(t,\cdot)\|_{L^2}+\<t\>^{-1-\min\{\rho_{\mathrm{L}},1\}+\ep/2}\|\CAL{D}(t)\F\CAL{M}(t)\F^{-1}f(t,\cdot)\|_{L^2}\\
\label{lemma_4_4_proof_5}
&\lesssim \<t\>^{-1-\min\{\rho_{\mathrm{L}},1\}+\ep/2}\|f(t,\cdot)\|_{H^1}.
\end{align}
To deal with the second term $[i\partial_t-H_0,\chi_t] U_{\Psi}(t)\F^{-1}f$, we compute
\begin{align*}
&[i\partial_t-H_0,\chi_t]\CAL{M}_\Psi(t)\CAL{D}(t)
=\left(-i\frac{x}{t^2}(\nabla \chi)_t+\frac{1}{t}(\nabla \chi)_t\cdot\nabla+\frac{1}{2t^2}(\Delta\chi)_t\right)\CAL{M}_\Psi(t)\CAL{D}(t)\\
&=\left\{\frac{i}{t}\left(\nabla\Psi-\frac xt\right)\cdot (\nabla \chi)_t+\frac{1}{2t^2}(\Delta\chi)_t\right\}\CAL{M}_\Psi(t)\CAL{D}(t)+\frac{1}{t^2}\CAL{M}_\Psi(t)\CAL{D}(t)(\nabla\chi)\cdot \nabla,
\end{align*}
where $(\nabla \chi)_t(x):=\nabla \chi(x/t)$ and $(\Delta\chi)_t(x):=(\Delta\chi)(x/t)$. Hence, we know by Proposition \ref{proposition_3_1} that
\begin{align}
\nonumber
&\|[i\partial_t-H_0,\chi_t]U_{\Psi}(t)\F^{-1}f(t,\cdot)\|_{L^2}\\
\nonumber
&\lesssim (\<t\>^{-1-\min\{\rho_{\mathrm{L}},1\}+\ep/2}+\<t\>^{-2})\|f(t,\cdot)\|_{L^2}+\<t\>^{-2}\|\nabla \F\CAL{M}(t)\F^{-1}f(t,\cdot)\|_{L^2}\\
\label{lemma_4_4_proof_6}
&\lesssim \<t\>^{-1-\min\{\rho_{\mathrm{L}},1\}+\ep/2}\|f(t,\cdot)\|_{H^1}.
\end{align}
For the last term, since $\<x\>^{-1}\lesssim \<t\>^{-1}$ on $\supp (1-\chi_t)$, \eqref{assumption_A_1} implies
\begin{align}
\label{lemma_4_4_proof_7}
\|(1-\chi_t)V^{\mathrm{S}}U_{\Psi}(t)\F^{-1}f(t,\cdot)\|_{L^2}\lesssim \<s\>^{-\rho_{\mathrm{S}}}\|f(t,\cdot)\|_{L^2}. 
\end{align}
It follows from \eqref{lemma_4_4_proof_5}--\eqref{lemma_4_4_proof_7} that
\begin{align}
\label{lemma_4_4_proof_8}
\|e^{-itH}[i\partial_t,e^{itH}\CAL{U}_3(t)]f(t,\cdot)\|_{L^2}\lesssim \<t\>^{-\min\{1+\rho_{\mathrm{L}},\rho_{\mathrm{S}},2\}+\ep/2}\|f(t,\cdot)\|_{H^1}. 
\end{align}
Now we decompose $e^{-itH}[i\partial_t,e^{itH}\CAL{U}_3(t)]=e^{-itH}[i\partial_t,e^{itH}\CAL{U}_3(t)]\<t\>^{\ep/2}\cdot \<t\>^{-\ep/2}$. 
By \eqref{lemma_4_4_proof_8} and H\"older's inequality, we obtain
\begin{align*}
\|e^{-itH}[i\partial_t,e^{itH}\CAL{U}_3(t)]\<t\>^{\ep/2}f\|_{L^1([T,\infty);L^2)}\lesssim T^{-\min\{\rho_{\mathrm{L}},\rho_{\mathrm{S}}-1,1\}+\ep}\|f\|_{L^{3/\ep}([T,\infty);H^1)}.
\end{align*}
A density argument then yields that $e^{-itH}[i\partial_t,e^{itH}\CAL{U}_3(t)]\<t\>^{\ep/2}$ extends uniquely to a bounded operator $\CAL{B}\in \mathbb B(L^{3/\ep}([T,\infty);H^1),L^1([T,\infty);L^2))$ satisfying, for all $f\in L^{3/\ep}([T,\infty);H^{1})$, $$\|\CAL{B}(t)f\|_{L^1([T,\infty);L^2)}\lesssim T^{-\min\{\rho_{\mathrm{L}},\rho_{\mathrm{S}}-1,1\}+\ep}\|f\|_{L^{3/\ep}([T,\infty);H^{1})}.$$Let us define $\CAL{C}(t):=\CAL{B}(t)\<t\>^{-\ep/2}$. Since $\<t\>^{-\ep/2}\in L^{3/\ep}(\R)$, $\CAL{C}(t)$ is  bounded  from $L^\infty([T,\infty);H^{1})$ to $L^1([T,\infty);L^2)$ and satisfies \eqref{lemma_4_4_2}. Finally, using Lemma \ref{lemma_nonlinear}, Strichartz estimates \eqref{strichartz_3} and \eqref{lemma_4_4_2} with $f=W\in L^\infty([T,\infty);H^{1})$ as in the proof of Lemma \ref{lemma_4_3}, we obtain \eqref{K20} for $\CAL{E}_2(t)$.
\end{proof}

\begin{remark}It follows from \eqref{lemma_4_4_proof_8} that, for any $f\in C_0^\infty((T,\infty)\times \R^{n})$, 
$$
\|e^{-itH}[i\partial_t,e^{itH}\CAL{U}_3(t)]f(t,\cdot)\|_{L^1([T,\infty); L^2)}\lesssim T^{-\min\{\rho_{\mathrm{L}},\rho_{\mathrm{S}}-1,1\}+\ep}\|f(t,\cdot)\|_{L^\infty([T,\infty);H^1)}. 
$$
However, this is not enough to conclude that $e^{-itH}[i\partial_t,e^{itH}\CAL{U}_3(t)]$ extends to a bounded operator from $L^\infty([T,\infty);H^1)$ to $L^1([T,\infty); L^2)$ since $C_0^\infty((T,\infty)\times \R^{n})$ is not dense in $L^\infty([T,\infty);H^1)$. Indeed, this is the reason why we introduced an operator $\CAL{B}(t)$ as an intermediate step. Note that $C_0^\infty((T,\infty)\times \R^{n})$ is dense in $L^{3/\ep}([T,\infty);H^1)$. This is verified by approximating $f\in L^{3/\ep}([T,\infty);H^1)$ by a step function $\sum_{j=1}^N a_j\mathds1_{E_j}$ with some $a_j\in H^1$ and bounded intervals $E_j$ and then approximating $a_j$  (resp. $\mathds1_{E_j}$) by $C_0^\infty$-functions  in $H^1$ (resp. $L^{3/\ep}$).  
\end{remark}


\section{Proof of Theorem \ref{theorem_1}}
\label{section_proof_theorem_1}

In this section, we present the proof of Theorem \ref{theorem_1}. To this end, we first prepare a priori estimates of the right hand side of  \eqref{K21} in the energy norm $X_T$. As we have already obtained necessary estimates for $\CAL{E}(t)$ in the previous section, it remains to deal with the term $\CAL{K}[u]$. Let 
$$
(q_n,r_n)
=\begin{cases}(4,\infty)&\text{if $n=1$,}\\(4,4)&\text{if $n=2$,}\\(2,6)&\text{if $n=3$.}\end{cases} 
$$

\begin{lemma}
\label{lemma_5_1}
Suppose $b>\frac n4$ and $u,u_1,u_2\in X(T,R,q_n,r_n)$. Then, for any admissible pair $(q,r)$, 
\begin{align}
\label{lemma_5_1_1}
\|\CAL{K}[u]\|_{X_{T,q,r}}&\lesssim R\left(R^{2/n}T^{-2(b-n/4)/n}+\|\widehat{u_+}\|_{L^\infty}^{2/n}\right),\\
\label{lemma_5_1_2}
\|\CAL{K}[u_1]-\CAL{K}[u_2]\|_{X_{T,q_n,r_n}}&\lesssim \left(R^{2/n}T^{-2(b-n/4)/n}+\|\widehat{u_+}\|_{L^\infty}^{2/n}\right) \left\| u_1-u_2 \right\|_{X_{T,q_n,r_n}}.
\end{align}
\end{lemma}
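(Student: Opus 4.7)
The plan is to apply the inhomogeneous Strichartz estimate \eqref{strichartz_3} on the interval $[t,\infty)$ together with the $(q,r)=(\infty,2)$ Strichartz bound. These give, for any admissible $(q,r)$ and $(\tilde q,\tilde r)$,
\begin{align*}
\|\CAL{K}[u](t)\|_{L^2}+\|\CAL{K}[u]\|_{L^q([t,\infty);L^r)}\lesssim \|F(u)-F(u_{\mathrm{p}})\|_{L^{\tilde q'}([t,\infty);L^{\tilde r'})}.
\end{align*}
Setting $v:=u-u_{\mathrm{p}}$ and using the elementary pointwise bound $|F(u)-F(u_{\mathrm{p}})|\lesssim (|v|^{2/n}+|u_{\mathrm{p}}|^{2/n})|v|$, I would split the right-hand side into two contributions and handle each with a different choice of $(\tilde q,\tilde r)$.

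For the contribution $|u_{\mathrm{p}}|^{2/n}|v|$, I take $(\tilde q,\tilde r)=(\infty,2)$. Using $|u_{\mathrm{p}}(s,x)|^{2/n}=s^{-1}|\widehat{u_+}(x/s)|^{2/n}\le s^{-1}\|\widehat{u_+}\|_{L^\infty}^{2/n}$ together with $\|v(s)\|_{L^2}\le Rs^{-b}$ gives
\begin{align*}
\||u_{\mathrm{p}}|^{2/n}v\|_{L^1([t,\infty);L^2)}\lesssim R\|\widehat{u_+}\|_{L^\infty}^{2/n}\int_t^\infty s^{-1-b}\,ds\lesssim R\|\widehat{u_+}\|_{L^\infty}^{2/n}t^{-b},
\end{align*}
which after multiplication by $t^b$ contributes exactly the $R\|\widehat{u_+}\|_{L^\infty}^{2/n}$ term in \eqref{lemma_5_1_1}. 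For the contribution $|v|^{1+2/n}$, I take $(\tilde q,\tilde r)=(q_n,r_n)$. In space, $\||v|^{1+2/n}\|_{L^{r_n'}_x}=\|v\|_{L^{(n+2)r_n'/n}_x}^{1+2/n}$, and a case-by-case check ($n=1,2,3$) shows that $(n+2)r_n'/n\in[2,r_n]$ (the extreme value $2$ being attained when $n=3$), so the right-hand side is estimated by interpolating between $\|v\|_{L^2}$ and $\|v\|_{L^{r_n}}$. A H\"older in time then distributes the resulting factors between the pointwise bound $\|v(s)\|_{L^2}\le Rs^{-b}$ and the Strichartz bound $\|v\|_{L^{q_n}([t,\infty);L^{r_n})}\le Rt^{-b}$; straightforward book-keeping yields
\begin{align*}
\||v|^{1+2/n}\|_{L^{q_n'}([t,\infty);L^{r_n'})}\lesssim R^{1+2/n}\,t^{\frac12-(1+\frac2n)b}.
\end{align*}
Multiplying by $t^b$ produces $R^{1+2/n}t^{1/2-2b/n}$, and since $b>n/4$ the supremum over $t\ge T$ is attained at $t=T$, giving $R^{1+2/n}T^{-2(b-n/4)/n}$, which is the first term in \eqref{lemma_5_1_1}.

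The difference estimate \eqref{lemma_5_1_2} reduces to the same computation: writing $v_j:=u_j-u_{\mathrm{p}}$, the pointwise bound $|F(u_1)-F(u_2)|\lesssim (|u_1|^{2/n}+|u_2|^{2/n})|u_1-u_2|$ combined with $|u_j|^{2/n}\lesssim |v_j|^{2/n}+|u_{\mathrm{p}}|^{2/n}$ reduces everything to the same type of expressions as above, with $v$ replaced by $v_1-v_2=u_1-u_2$. The main technical obstacle is the time H\"older step: one must distribute exponents so that the convergence condition for $\int_t^\infty s^{-\alpha b}\,ds$ and the condition $\sup_{t\ge T}t^{1/2-2b/n}=T^{1/2-2b/n}$ both reduce to the single threshold $b>n/4$, reflecting the critical, long-range nature of the nonlinearity.
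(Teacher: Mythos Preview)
Your proof is correct and follows essentially the same approach as the paper: split $F(u)-F(u_{\mathrm{p}})$ into a part controlled by $|u_{\mathrm{p}}|^{2/n}|v|$ (estimated in $L^1_tL^2_x$) and one by $|v|^{1+2/n}$ (estimated in $L^{q_n'}_tL^{r_n'}_x$), then apply the Strichartz estimate \eqref{strichartz_3}. The only cosmetic difference is that the paper reaches these pointwise bounds via the characteristic-function splitting $F^{(1)}+F^{(2)}$ according to whether $|u_{\mathrm{p}}|>|v|$ or not, and then refers to \cite{HaWaNa,MMU} for the explicit Strichartz bookkeeping you have sketched.
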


\begin{proof}
The proof is essentially same as that in \cite{HaWaNa,MMU}. Indeed, if we rewrite $F(u) -F(u_{\mathrm{p}})$ as the sum $F^{(1)}(u)+F^{(2)}(u)$ with $\mathds1_A(x)$ being the characteristic function of $A$ and
\begin{align*}
F^{(1)}(u) &= \mathds1_{\{|u_{\mathrm{p}} | > | u-u_{\mathrm{p}} |\}} \left( F(u) - F(u_{\mathrm{p}}) \right),\\
F^{(2)}(u) &= \mathds1_{\{|u_{\mathrm{p}} | \leq | u-u_{\mathrm{p}} |\}} \left( F(u) - F(u_{\mathrm{p}})\right),
\end{align*}
then $F^{(1)}$ and $F^{(2)}$ satisfy
$$
|F^{(1)}(u)|\lesssim |u_{\mathrm{p}}|^{\frac 2n}|u-u_{\mathrm{p}}|,\quad
|F^{(2)}(u)|\lesssim |u-u_{\mathrm{p}}|^{\frac 2n+1}.
$$
Hence the Strichartz estimate \eqref{strichartz_3} shows that 
$$
\left\|\CAL{K}[u]\right\|_{L^q([t,\infty);L^r)}\lesssim \left\||u_{\mathrm{p}}|^{\frac 2n}|u-u_{\mathrm{p}}|\right\|_{L^1([t,\infty);L^2)}+\left\||u-u_{\mathrm{p}}|^{\frac 2n+1}\right\|_{L^{q_n'}([t,\infty);L^{r_n'})}.
$$
Now one can follow completely the same argument as that in \cite[Section 3]{HaWaNa} for $n=1,2$ and of \cite[Lemma 3.2]{MMU} for $n=3$ to obtain \eqref{lemma_5_1_1}. Since
$$
F(u_1) -F(u_2)=F^{(1)}(u_1)-F^{(1)}(u_2)+F^{(2)}(u_1)-F^{(2)}(u_2),
$$
\eqref{lemma_5_1_2} can be also obtained by the same argument. 
\end{proof}

We are now ready to complete the proof of Theorem \ref{theorem_1}. 

\begin{proof}[Proof of Theorem \ref{theorem_1}]
Since $\frac n4<b<\min\{\frac \gamma2,\rho_{\mathrm{L}},\rho_{\mathrm{S}}-1,1\}$, Proposition \ref{proposition_4_1} and Lemma \ref{lemma_5_1} show that, for any $R>0$ there exists $T,\sigma>0$  such that  \eqref{K21} has a unique solution $u\in X(T,R,q_n,r_n)$ if $\|\widehat{u_+}\|_{L^\infty}\le \sigma$ by the contraction mapping theorem. Proposition \ref{proposition_4_1} and Lemma \ref{lemma_5_1} also show $u \in L^q([T,\infty);L^r)$ for any admissible pair $(q,r)$. Then it follows from Lemma \ref{lemma_integral_1} that $u$ also solves  \eqref{lemma_integral_1_1} with $t_0=T$. Thus, we can apply Proposition \ref{proposition_GWP} below to conclude that $u$ can be extended backward in time uniquely, so $u\in C(\R;L^2)$ is a unique global solution to \eqref{NLS1} satisfying \eqref{theorem_1_1}. 
\end{proof}

\section{Proof of Theorem \ref{theorem_3}}
\label{section_proof_theorem_3}
Here we prove Theorem \ref{theorem_3}. The proof follows basically the same line as that of Theorem \ref{theorem_1}. The only difference is that we use the following proposition instead of Proposition \ref{proposition_3_1}. 

\begin{proposition}
\label{proposition_Dollard_1}
If $t \geq 2$ and $|x|\ge c_0t$ with some $c_0>0$ independent of $t$, then
\begin{align}
\label{proposition_Dollard_1_1}
\left|\nabla \Psi_{\mathrm{D}}(t,x)-\frac xt\right|\lesssim \max\{t^{-\rho_{\mathrm L}},t^{-1} ( \log t )^2 \},\quad 
\left|\Delta \Psi_{\mathrm{D}}(t,x)-\frac nt \right|\lesssim \max\{t^{-1-\rho_{\mathrm L}},t^{-2} ( \log t ) ^2\}.
\end{align}
and
\begin{align}
\label{proposition_Dollard_1_2}
\left|\partial_t \Psi_{\mathrm{D}}(t,x)+\frac12|\nabla\Psi_{\mathrm{D}}(t,x)|^2+V^{\mathrm{L}}(x)\right|\lesssim \max\{t^{-2\rho_{\mathrm L}},t^{-2}(\log t)^4\}.
\end{align}
\end{proposition}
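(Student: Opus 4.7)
The plan is to first simplify the Dollard phase by noting that the ``Euler combination'' inside the definition of $\widetilde V$ collapses after a single integration by parts, and then to differentiate the resulting explicit formula and estimate each term on the high-velocity region $|x|\ge c_0 t$ using the decay \eqref{assumption_A_3} of $V^{\mathrm L}$.

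The key preliminary observation is that, since $y\cdot(\nabla V^{\mathrm L})(\sigma y)=\frac{d}{d\sigma}V^{\mathrm L}(\sigma y)$, integration by parts gives
\begin{equation*}
y\cdot\nabla_y Q(\tau,y)=\int_0^\tau \sigma\frac{d}{d\sigma}V^{\mathrm L}(\sigma y)\,d\sigma=\tau V^{\mathrm L}(\tau y)-Q(\tau,y),
\end{equation*}
so $Q(\tau,y)+y\cdot\nabla_y Q(\tau,y)=\tau V^{\mathrm L}(\tau y)$, and hence $\widetilde V(t,y)=\int_0^t V^{\mathrm L}(\tau y)\,d\tau$. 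The substitution $s=\tau/t$ then rewrites
\begin{equation*}
\Psi_{\mathrm D}(t,x)=\frac{|x|^2}{2t}-t\int_0^1 V^{\mathrm L}(sx)\,ds,
\end{equation*}
which serves as the starting point for all further computations.

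For the derivative bounds in \eqref{proposition_Dollard_1_1}, direct differentiation yields $\nabla\Psi_{\mathrm D}-x/t=-t\int_0^1 s\,(\nabla V^{\mathrm L})(sx)\,ds$ and $\Delta\Psi_{\mathrm D}-n/t=-t\int_0^1 s^2(\Delta V^{\mathrm L})(sx)\,ds$. On the region $|x|\ge c_0 t$ one has the pointwise bound $|\partial^\alpha V^{\mathrm L}(sx)|\lesssim \langle sx\rangle^{-\rho_{\mathrm L}-|\alpha|}\lesssim \langle st\rangle^{-\rho_{\mathrm L}-|\alpha|}$, so the substitution $\sigma=st$ reduces the matter to the one-dimensional integrals $\frac{1}{t}\int_0^t \sigma\langle\sigma\rangle^{-\rho_{\mathrm L}-1}\,d\sigma$ and $\frac{1}{t^2}\int_0^t \sigma^2\langle\sigma\rangle^{-\rho_{\mathrm L}-2}\,d\sigma$, controlled by $\langle t\rangle^{-\rho_{\mathrm L}}$ and $\langle t\rangle^{-1-\rho_{\mathrm L}}$ respectively via a routine case split on $\rho_{\mathrm L}$.

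For \eqref{proposition_Dollard_1_2}, the Dollard phase has been engineered so that the Hamilton--Jacobi defect is an exact quadratic remainder. Differentiating gives $\partial_t\Psi_{\mathrm D}=-|x|^2/(2t^2)-\int_0^1 V^{\mathrm L}(sx)\,ds$, and expanding
\begin{equation*}
\tfrac12|\nabla\Psi_{\mathrm D}|^2=\frac{|x|^2}{2t^2}-x\cdot\int_0^1 s(\nabla V^{\mathrm L})(sx)\,ds+\tfrac{t^2}{2}\Bigl|\int_0^1 s(\nabla V^{\mathrm L})(sx)\,ds\Bigr|^2,
\end{equation*}
I apply the Euler identity $sx\cdot(\nabla V^{\mathrm L})(sx)=\frac{d}{ds}[sV^{\mathrm L}(sx)]-V^{\mathrm L}(sx)$ once more to get $x\cdot\int_0^1 s(\nabla V^{\mathrm L})(sx)\,ds=V^{\mathrm L}(x)-\int_0^1 V^{\mathrm L}(sx)\,ds$. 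The $|x|^2/(2t^2)$ terms and the $V^{\mathrm L}(sx)$-integrals cancel exactly, leaving only
\begin{equation*}
\partial_t\Psi_{\mathrm D}+\tfrac12|\nabla\Psi_{\mathrm D}|^2+V^{\mathrm L}(x)=\tfrac{t^2}{2}\Bigl|\int_0^1 s(\nabla V^{\mathrm L})(sx)\,ds\Bigr|^2,
\end{equation*}
which is $\lesssim t^{-2\rho_{\mathrm L}}$ by squaring the estimate from the previous step. Beyond recognising the two parallel Euler-type integration-by-parts identities (one collapsing $\widetilde V$ to $Q$, the other producing the Hamilton--Jacobi cancellation), the proof is direct calculus, and I do not foresee any serious technical obstacle.
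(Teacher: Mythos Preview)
Your proof is correct and follows essentially the same route as the paper: both rely on the Euler-type integration-by-parts identity $Q(\tau,y)+y\cdot\nabla_y Q(\tau,y)=\tau V^{\mathrm L}(\tau y)$ to produce the Hamilton--Jacobi cancellation, and both reduce the gradient and Laplacian estimates to the same one-dimensional integrals. The one genuine difference is that you apply this identity \emph{first} to collapse $\widetilde V$ to $Q$, obtaining the clean closed form $\Psi_{\mathrm D}(t,x)=\frac{|x|^2}{2t}-t\int_0^1 V^{\mathrm L}(sx)\,ds$, whereas the paper keeps $\widetilde V$ and $Q$ as separate objects throughout, estimates $\partial^\alpha\widetilde V(t,y)$ via the definition, and only invokes the Euler identity at the end (in the form $V^{\mathrm L}(x)=\frac1t\{Q(t,y)+y\cdot\nabla Q(t,y)\}$) to identify the defect as $\frac{1}{t^2}|\nabla\widetilde V(t,y)|^2$. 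Your simplification makes the bookkeeping lighter and the structure of the Dollard phase more transparent, but the analytic content is the same.
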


\begin{proof}
As in Section \ref{section_phase}, we may assume $\rho_{\mathrm L}\le1$ without loss of generality. We let $y=x/t$ for short and suppose $|y|\ge c_0$. It follows from \eqref{assumption_A_3} that
\begin{align*}
|(\partial_x^\alpha Q)(t,y)|\lesssim \int_0^t\tau^{|\alpha|}\<\tau y\>^{-\rho_{\mathrm{L}}-|\alpha|}d\tau \le C_\alpha \max\{t^{1-\rho_{\mathrm L}},\log t\}. 
\end{align*}
Differentiating $\widetilde V(t,x)$ in $x$ and using this bound, we also have
\begin{align}
\nonumber
|(\partial_x^\alpha \widetilde V)(t,y)|
&\le C_\alpha\int_0^t \left(\<\tau\>^{-\rho_{\mathrm{L}}}+\frac{|y|}{\tau}\int_0^\tau s^{1+|\alpha|}\<s y\>^{-1-\rho_{\mathrm{L}}-|\alpha|}ds\right)d\tau\\
\label{proposition_Dollard_1_proof_1}
&\le C_\alpha \max\{t^{1-\rho_{\mathrm L}}, ( \log  t ) ^2\}.
\end{align}
Moreover, a direct computation yields
\begin{align}
\nonumber
V^{\mathrm{L}}(x)
&=\frac1t \int_0^t\left\{V^{\mathrm{L}}(\tau y)+\tau y\cdot (\nabla V^{\mathrm{L}})(\tau y)\right\}d\tau\
=\frac1t \left\{Q(t,y)+y\cdot (\nabla Q)(t,y)\right\},\\
\nonumber
\nabla\Psi_{\mathrm{D}}(t,x)&=\frac{x}{t}-\frac1t(\nabla \widetilde{V})(t,y),\\
\label{proposition_Dollard_1_proof_2}
\Delta\Psi_{\mathrm{D}}(t,x)&=\frac{n}{t}-\frac{1}{t^2}(\Delta \widetilde{V})(t,y).
\end{align}
In particular, $\Psi_{\mathrm{D}}$ satisfies
\begin{align}
\nonumber
-\partial_t \Psi_{\mathrm{D}}(t,x)
&=\frac{|x|^2}{2t^2}+\frac1t \left\{Q(t,y)+y\cdot (\nabla Q)(t,y)\right\}-\frac yt \cdot (\nabla \widetilde{V})(t,y)\\
\nonumber
&=\frac12|\nabla \Psi_{\mathrm{D}}(t,x)|^2+\frac yt \cdot (\nabla \widetilde{V})(t,y)-\frac{1}{2t^2}| (\nabla \widetilde{V})(t,y)|^2+V^{\mathrm{L}}(x)-\frac yt \cdot (\nabla \widetilde{V})(t,y)\\
\label{proposition_Dollard_1_proof_3}
&=\frac12|\nabla \Psi_{\mathrm{D}}(t,x)|^2+V^{\mathrm{L}}(x)-\frac{1}{2t^2}|( \nabla  \widetilde{V})(t,y)|^2.
\end{align}
\eqref{proposition_Dollard_1_1} and \eqref{proposition_Dollard_1_2} follows by applying   \eqref{proposition_Dollard_1_proof_1} to the remainder terms of \eqref{proposition_Dollard_1_proof_2} and \eqref{proposition_Dollard_1_proof_3}. 
\end{proof}

\begin{proof}[Proof of Theorem \ref{theorem_3}]
We define the Dollard type modified free evolution $U_{\mathrm{D}}(t)$ by
\begin{align*}
U_{\mathrm{D}}(t) := \CAL{M}_D(t)\CAL{M}(-t)e^{-itH_0},\quad \CAL{M}_D(t)f(x):=e^{i\Psi_{\mathrm{D}}(t,x)}f(x).
\end{align*}
We also define $\CAL{U}_{\mathrm{D},j}(t)$ for $j=1,2,3$ and $\CAL{E}_{\mathrm{D}}(t)=\CAL{E}_{\mathrm{D},1}(t)+\CAL{E}_{\mathrm{D},2}(t)$ by the same way as that for $\CAL{U}_{j}(t)$ and $\CAL{E}(t)=\CAL{E}_{1}(t)+\CAL{E}_{2}(t)$ in Subsection \ref{subsection_integral_equation} with $U_{\Psi}$ replaced by $U_{\mathrm{D}}$. Then we first have
\begin{align*}
\|\CAL{E}_{\mathrm{D},1}(t)\|_{X_T}\lesssim T^{b-\gamma/2+\ep}\Gamma_{\lceil \gamma \rceil+1}(u_+)
\end{align*}
by the completely same proof as that of Lemma \ref{lemma_4_3}. Indeed, we did not use any properties of $\Psi(t,x)$ in the proof of Lemma \ref{lemma_4_3} except for the uniform bound $|e^{i\Psi(t,x)}|=1$. To deal with $\CAL{E}_{\mathrm{D},2}(t)$, we observe by the same calculation as in the proof of Lemma \ref{lemma_4_4} that
\begin{align}
\nonumber
e^{-itH}[i\partial_t,e^{itH}\CAL{U}_{\mathrm{D},3}(t)]
&=i(1-\chi_t)\CAL{M}_{\mathrm{D}}(t)A_{\mathrm{D}}(t)\CAL{D}(t)\F\CAL{M}(t)\F^{-1}\\
\label{theorem_3_proof_1}
&-[i\partial_t-H_0,\chi_t]U_{\mathrm{D}}(t)\F^{-1}-(1-\chi_t)V^{\mathrm{S}}U_{\mathrm{D}}(t)\F^{-1},
\end{align}
where 
$$
A_{\mathrm{D}}(t)=i \left( \partial_t \Psi_{\mathrm{D}}+\frac12|\nabla\Psi_{\mathrm{D}}|^2+V^{\mathrm{L}} \right) +\left(\nabla \Psi_{\mathrm{D}}- \frac{x}{t} \right) \cdot \nabla + \frac12 \left( 
\Delta \Psi_{\mathrm{D}}- \frac{n}{t}\right). 
$$
Since the last two terms of the RHS of \eqref{theorem_3_proof_1} can be dealt with exactly the same way as in Lemma \ref{lemma_4_4}, Proposition \ref{proposition_Dollard_1} shows
$$
\|e^{-itH}[i\partial_t,e^{itH}\CAL{U}_{\mathrm{D},3}(t)]f(t,\cdot)\|_{L^2}\lesssim \<t\>^{-\min\{2\rho_{\mathrm{L}},1+\rho_{\mathrm{L}},\rho_{\mathrm{S}},2\}+\ep}\|f(t, \cdot )\|_{H^1}
$$
for any $t\ge T$ and sufficiently large $T$, and hence
$$
\|\CAL{E}_{\mathrm{D},2}(t)\|_{X_T}\lesssim T^{b-\min\{2\rho_{\mathrm{L}}-1,\rho_{\mathrm{L}},\rho_{\mathrm{S}}-1,1\}+\ep}\Gamma_{1}(u_+).
$$
With these estimates for $\CAL{E}_{\mathrm{D},1}(t)$ and $\CAL{E}_{\mathrm{D},2}(t)$ and Lemma \ref{lemma_5_1} at hand, we can follow the same argument as that in Section \ref{section_proof_theorem_1} in the remaining part of the proof of Theorem \ref{theorem_3}, so we omit it. 
\end{proof}


\appendix

\section{Global existence for the Cauchy problem}
\label{appendix_proposition_GWP}

Here we provide the global existence of the $L^2$-solutions to the Cauchy problem for \eqref{NLS1}. 
\begin{proposition}
\label{proposition_GWP}
Let $u_0\in L^2(\R^n)$ and $t_0\in \R$. Then \eqref{NLS1} with the initial condition $u(t_0)=u_0$ admits a unique global solution $u\in C(\R;L^2(\R^n))\cap L^{4+\frac4n}_{\loc}(\R;L^{2+\frac2n}(\R^n))$ satisfying
\begin{align}
\label{proposition_GWP_1}
u(t)=e^{-i(t-t_0)H}u_0-i\int_{t_0}^te^{-i(t-s)H}F(u(s))ds
\end{align}
and $\norm{u(t)}_{L^2}=\norm{u_0}$ for all $t\in \R$. Moreover, $u\in L^q_{\loc}(\R;L^r(\R^n))$ for any admissible pair $(q,r)$. 
\end{proposition}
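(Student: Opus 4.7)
The proof follows the classical Cazenave--Weissler scheme, exploiting that the nonlinear exponent $2/n$ is strictly below the $L^2$-critical threshold $4/n$. The natural Strichartz pair is $(q,r)=(4+4/n,\, 2+2/n)$, which is admissible in the sense of \eqref{admissible} and satisfies the crucial algebraic identity $(1+2/n)r'=r$; H\"older in space then gives $\|F(u)\|_{L^{r'}_x} = |\nu|\|u\|^{1+2/n}_{L^r_x}$. The plan has three steps: local well-posedness on a short interval whose length depends only on $\|u_0\|_{L^2}$, conservation of the $L^2$-mass via approximation by smooth data, and iteration to globalize.

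For local existence I would apply the Strichartz estimates \eqref{strichartz_1}--\eqref{strichartz_2} to the map
\[
\Phi(u)(t) = e^{-i(t-t_0)H}u_0 - i\int_{t_0}^t e^{-i(t-s)H} F(u(s))\, ds
\]
on $Y_I := L^\infty(I;L^2) \cap L^q(I;L^r)$ with $I=[t_0, t_0+\delta]$. Combining the spatial identity above with H\"older in time yields the subcritical gain
\[
\|F(u)\|_{L^{q'}_t L^{r'}_x(I)} \lesssim \delta^{1/2}\, \|u\|^{1+2/n}_{L^q_t L^r_x(I)},
\]
the factor $\delta^{1/2}$ reflecting the strict $L^2$-subcriticality of the nonlinearity (one checks directly that $1-(2+2/n)/q = 1/2$ when $q=4+4/n$). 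The analogous Lipschitz estimate using $|F(u)-F(v)|\lesssim (|u|^{2/n}+|v|^{2/n})|u-v|$ then makes $\Phi$ a contraction on the ball $\{u\in Y_I : \|u\|_{Y_I}\le R\}$, provided $R\sim \|u_0\|_{L^2}$ and $\delta\sim \|u_0\|_{L^2}^{-4/n}$ are small enough; this yields the unique local solution $u\in Y_I$ to \eqref{proposition_GWP_1}.

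Next I would establish $\|u(t)\|_{L^2}=\|u_0\|_{L^2}$ by a density argument: take smooth approximants $u_{0,k}\in H^2$ with $u_{0,k}\to u_0$ in $L^2$. Persistence of $H^2$-regularity for the approximate solutions $u_k$ (via Strichartz at the $H^2$-level together with a fractional chain rule for $F(u_k)$) gives $u_k\in C(I;H^2)$, for which a direct calculation yields $\tfrac{d}{dt}\|u_k(t)\|^2_{L^2} = 2\Im\<Hu_k+F(u_k), u_k\> = 0$, since $H$ is self-adjoint and $F(u_k)\overline{u_k} = \nu|u_k|^{2+2/n}\in\R$. Lipschitz stability of the contraction in $L^2$ data gives $u_k\to u$ in $Y_I$, and the mass conservation passes to the limit.

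Global extension is then immediate: since the local existence time $\delta$ depends only on $\|u(t)\|_{L^2}$, which is now conserved, iterating forward and backward in time with uniformly sized steps produces the unique global solution $u\in C(\R;L^2)$. The additional claim $u\in L^{\tilde q}_{\mathrm{loc}}(\R;L^{\tilde r})$ for any admissible $(\tilde q, \tilde r)$ then follows by applying \eqref{strichartz_1}--\eqref{strichartz_2} once more to the integral equation \eqref{proposition_GWP_1} on each compact subinterval, using the $L^{q'}_t L^{r'}_x$-bound on $F(u)$ already obtained. The main technical hurdle is the $L^2$-conservation step: because $F(u)=\nu|u|^{2/n}u$ is only $C^1$ (not $C^2$) in $u$, persistence of $H^2$-regularity for rough data requires a careful fractional chain/product rule, but this is standard and does not affect the main line of reasoning.
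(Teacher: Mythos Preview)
Your local-existence argument is correct and essentially identical to the paper's (which follows Tsutsumi): the same admissible pair $(4+4/n,\,2+2/n)$, the same $\delta^{1/2}$ subcritical gain from H\"older in time, the same contraction. The globalization by mass conservation and the upgrade to all admissible pairs are also handled the same way.

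Where you diverge is in the proof of $\|u(t)\|_{L^2}=\|u_0\|_{L^2}$, and here there is a genuine gap in the present setting. You invoke ``Strichartz at the $H^2$-level'' to obtain $H^2$ persistence for the approximants, but Assumption~\ref{assumption_A}(A2) only gives $L^2$-Strichartz for $e^{-itH}$, and nothing in the hypotheses lets you commute derivatives past the propagator: the singular part $V^{\mathrm{sing}}$ is merely $H_0$-form compact, so in general $D(H)\neq H^2$ and $H^s$-Strichartz for $s>0$ is not available. Even the pairing $\langle Hu_k,u_k\rangle$ in your differentiation of $\|u_k\|_{L^2}^2$ is only guaranteed for $u_k\in D(H)$, which $u_k\in H^2$ does not imply here. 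The same obstruction blocks the more modest $H^1$-persistence route. (The separate issue you flag---that $F$ is only $C^1$ for $n\ge 2$, so a fractional chain rule is needed---is real but secondary; the potential-driven obstruction is the decisive one.)

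The paper circumvents this entirely by using Ozawa's device, which proves mass conservation directly from the integral equation at the $L^2$ level: expand $\|u(t)\|_{L^2}^2$ via \eqref{proposition_GWP_1} and check that the cross term $-2\Im\int_0^t\langle e^{-isH}u_0,F(u(s))\rangle\,ds$ and the diagonal term $\bigl\|\int_0^t e^{isH}F(u(s))\,ds\bigr\|_{L^2}^2$ cancel, using $\Im\langle F(u),u\rangle=0$ and the integral equation once more. All pairings are justified by the duality between $L^\infty_tL^2_x\cap L^{q}_tL^{r}_x$ and $L^1_tL^2_x+L^{q'}_tL^{r'}_x$, so no regularity beyond the local theory is needed. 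Given that only $L^2$-Strichartz is assumed, this is the natural approach.
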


\begin{proof}
Suppose $t_0=0$ without loss of generality. The proof relies on the standard argument by Tsutsumi  \cite{Tsutsumi_FE}. Let us set $\norm{v}_{\mathscr Z_T}=\norm{v}_{L^\infty_TL^2_x}+\norm{v}_{L^{2r_0}_TL^{r_0}_x}$ and consider the map
$$
\Phi(v)=e^{-itH}u_0-i\int_0^t e^{-i(t-s)H}F(v(s))ds,
$$
where $L^q_TL^r_x:=L^q([-T,T];L^r(\R^d))$ and $r_0=2+\frac2n$. Strichartz estimates \eqref{strichartz_1} and \eqref{strichartz_2} then imply
\begin{align*}
\norm{\Phi(v)}_{\mathscr Z_T}+\norm{\Phi(v)}_{L^q_TL^{r}_x}\lesssim \norm{u_0}_{L^2}+|\nu|\norm{|v|^{1+2/n}}_{L^{(2r_0)'}_TL^{r_0'}_x}
\lesssim \norm{u_0}_{L^2}+ T^{\delta}\norm{v}_{L^{2r_0}_TL^{r_0}_x}^{1+2/n}.
\end{align*}
for any admissible pair $(q,r)$. It follows from a similar calculation that
$$
\norm{\Phi(v_1)-\Phi(v_2)}_{\mathscr Z_T}\lesssim T^\delta (\norm{v_1}_{\mathscr Z_T}^{2/n}+\norm{v_2}_{\mathscr Z_T}^{2/n})\norm{v_1-v_2}_{\mathscr Z_T}.
$$
Therefore, there exists a unique local solution $u\in C_TL^2_x\cap L^{2r_0}_TL^{r_0}_x$ to \eqref{proposition_GWP_1} with some $T=T(\norm{u_0}_{L^2})$ by the contraction mapping theorem, which belongs to $L^{q}_TL^{r}_x$ for any admissible pair $(q,r)$.

To extend $u$ globally in time, it is enough to observe that, for all $|t|\le T$, 
\begin{align}
\label{appendix_proposition_GWP_1}
\norm{u(t)}_{L^2}=\norm{u_0}_{L^2}.
\end{align}
To this end, we employ the idea by Ozawa  \cite{Ozawa_CVPDE}. Since $e^{-itH}u_0\in L^\infty_tL^2_x\cap L^{2r_0}_TL^{r_0}_x$ and $F(u)\in L^1_TL^2_x\cap L^{(2r_0)'}_TL^{r_0'}_x$ by the above argument, the quantity $\int_0^t \<e^{-isH}u_0,F(u(s))\>ds$ makes sense as the duality coupling on $(L^\infty_tL^2_x\cap L^{2r_0}_TL^{r_0}_x)\times (L^1_tL^2_x+L^{(2r_0)'}_TL^{r_0'}_x)$. Thus, \begin{align}\norm{u(t)}_{L^2}^2
\label{appendix_proposition_GWP_2}=\norm{u_0}_{L^2}^2-2\Im \int_0^t\<e^{-isH}u_0,F(u(s))\>ds+\bignorm{\int_0^t e^{isH}F(u(s))ds}_{L^2}^2,
\end{align}
the second and the last terms  in the RHS of \eqref{appendix_proposition_GWP_2} cancel each other out as follows:
\begin{align*}
\bignorm{\int_0^t e^{isH}F(u(s))ds}_{L^2}^2
&=\Re\int_0^t\int_0^t\<F(u(s)),e^{-i(s-s')H}F(u(s'))\>ds'ds\\
&=2\Re\int_0^t\int_0^s\<F(u(s)),e^{-i(s-s')H}F(u(s'))\>ds'ds\\
&=-2\Im \int_0^t\left\langle F(u(s)),u(s)+i\int_0^se^{-i(s-s')H}F(u(s'))ds'\right\rangle ds\\
&=2\Im \int_0^t\left\langle e^{-isH}u_0,F(u(s))\right\rangle ds,
\end{align*}
where we use the fact $\Im \<F(u),u\>=0$ and \eqref{appendix_proposition_GWP_2}. Hence, \eqref{appendix_proposition_GWP_1} follows. 
\end{proof}

\section{Proof of Lemma \ref{lemma_nonlinear}}
\label{appendix_lemma_nonlinear}
This appendix is devoted to the proof of Lemma \ref{lemma_nonlinear}. We begin with the following lemma. 
\begin{lemma}
\label{lemma_lemma_nonlinear}
Let $f(u)$  be one of $|u|^{\frac 2n}$, $|u|^{\frac 2n-2}u^2$ and $e^{i|u|^{\frac2n}}$. 
\begin{itemize}
\item If $n=1$ and $1/2<s<1$ then for $u,v\in H^s$
\begin{align}
\label{lemma_lemma_nonlinear_1}
\|f(u)v\|_{H^s}\lesssim (1+\|u\|_{H^s}^2)\|v\|_{H^s}.
\end{align}
\item If $n \ge1$, $0<s<\max(\frac n2,1)$ and $\ep>0$, then for $u\in H^{\frac n2+\ep}$ and $v\in H^s$
\begin{align}
\label{lemma_lemma_nonlinear_2}
\|f(u)v\|_{H^s}\lesssim (1+\|u\|_{H^{\frac n2+\ep}}^{\frac 2n})\|v\|_{H^s}.
\end{align}
\end{itemize}
\end{lemma}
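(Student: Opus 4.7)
The plan is to combine the Kato--Ponce fractional Leibniz inequality with a Moser-type composition estimate for $f$. All three choices share two key features: $|f(u)| \lesssim 1 + |u|^{2/n}$, and $f$ is globally H\"older of exponent $2/n$ as a function of $(\mathrm{Re}\, u, \mathrm{Im}\, u)$, becoming Lipschitz when $n=2$ and $C^\infty$ when $n=1$. This structural observation drives both cases of the lemma.

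First I treat the one-dimensional case, where $2/n = 2$ makes $f$ smooth in all three instances (literally $u\bar u$, $u^2$, and $e^{i u \bar u}$). I apply the product form of Kato--Ponce
\[
\|f(u)v\|_{H^s} \lesssim \|f(u)\|_{L^\infty}\|v\|_{H^s} + \|f(u)\|_{H^s}\|v\|_{L^\infty},
\]
and invoke $H^s(\mathbb R) \hookrightarrow L^\infty(\mathbb R)$ (valid since $s > 1/2$) to replace $\|v\|_{L^\infty}$ by $\|v\|_{H^s}$ and $\|u\|_{L^\infty}$ by $\|u\|_{H^s}$. The pointwise bound $|f(u)| \lesssim 1 + |u|^2$ yields $\|f(u)\|_{L^\infty} \lesssim 1 + \|u\|_{H^s}^2$, while a standard Moser estimate for smooth nonlinearities with $F(0) = 0$ --- direct via Kato--Ponce for $u^2$ and $u\bar u$, and via a chain-rule/Littlewood--Paley argument for $e^{iu\bar u} - 1$ --- gives $\|f(u)\|_{H^s} \lesssim \|u\|_{H^s}^2$. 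Combining these proves \eqref{lemma_lemma_nonlinear_1}.

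For the general case $0 < s < \max(n/2, 1)$ I will follow the same scheme with two adjustments. First, since $s < n/2$ is allowed, $H^s$ no longer embeds into $L^\infty$, so the Sobolev exponents in Kato--Ponce must be chosen so that $v$ appears only with its $H^s$-norm; the extra assumption $u \in H^{n/2+\epsilon} \hookrightarrow L^\infty$ is what supplies any pointwise control of $u$, giving $\|f(u)\|_{L^\infty} \lesssim 1 + \|u\|_{H^{n/2+\epsilon}}^{2/n}$. Second, when $n \geq 3$, $f$ is only H\"older of exponent $\alpha := 2/n < 1$, so a smooth-composition Moser estimate is unavailable. I will therefore decompose $f(u)v$ via Bony's paraproduct as $T_{f(u)}v + T_v f(u) + R(f(u), v)$: the first piece is handled by $\|f(u)\|_{L^\infty}\|v\|_{H^s} \lesssim (1 + \|u\|_{H^{n/2+\epsilon}}^{2/n})\|v\|_{H^s}$, while the other two require a fractional-regularity estimate on $f(u)$.

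The main obstacle is precisely this fractional-regularity bound: controlling $f(u)$ in an appropriate Besov/Triebel norm by a multiple of $\|u\|_{H^{n/2+\epsilon}}^{2/n}$ when $f$ itself is only H\"older of order $2/n$. The plan is to invoke a Runst--Sickel type composition estimate for H\"older nonlinearities --- which transfers the fractional smoothness of $u$, available through $H^{n/2+\epsilon} \hookrightarrow C^\delta$ for a small $\delta > 0$, into fractional smoothness of $f \circ u$ with the prefactor scaling like $\|u\|_{L^\infty}^{\alpha}$ --- and then pair it with the paraproduct estimates above. The bookkeeping of Sobolev and paraproduct indices in this step is the delicate point, but it yields the required bound $\lesssim (1 + \|u\|_{H^{n/2+\epsilon}}^{2/n})\|v\|_{H^s}$ and completes the proof of \eqref{lemma_lemma_nonlinear_2}.
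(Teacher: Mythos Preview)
Your approach differs from the paper's, and while the $n=1$ part is essentially fine, the general case has a genuine gap.

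\textbf{Comparison of approaches.} The paper does not use Kato--Ponce or paraproducts at all. It works directly with the first-difference characterization
\[
\|g\|_{\dot H^s}\sim \|g\|_{\dot B^s_{2,2}}\sim\Big(\int_0^\infty t^{-1-2s}\sup_{|y|\le t}\|\tau_y g-g\|_{L^2}^2\,dt\Big)^{1/2},\qquad 0<s<1,
\]
writes $\tau_y[f(u)v]-f(u)v=\{f(\tau_yu)-f(u)\}\tau_yv+f(u)(\tau_yv-v)$, applies the pointwise H\"older bound $|f(z_1)-f(z_2)|\lesssim |z_1-z_2|^{2/n}$ (resp.\ $\max(|z_1|,|z_2|)|z_1-z_2|$ when $n=1$), and then uses H\"older's inequality with exponents $L^{2/s}\times L^{2n/(n-2s)}$ together with the Sobolev embeddings $H^s\hookrightarrow L^{2n/(n-2s)}$ and $H^{n/2+\ep}\hookrightarrow \dot B^{ns/2}_{2/s,q}$. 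This is entirely elementary and sidesteps any abstract Moser/composition machinery.

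\textbf{The gap in your general case.} For $T_v f(u)$ and $R(f(u),v)$ you plan to control $f(u)$ in a fractional space using only $H^{n/2+\ep}\hookrightarrow C^\delta$, which yields $f(u)\in C^{\alpha\delta}=B^{\alpha\delta}_{\infty,\infty}$ with $\alpha\delta$ arbitrarily small. This is not enough: for $T_v f(u)$ one has, schematically,
\[
\|\Delta_k(T_v f(u))\|_{L^2}\lesssim \|S_{k-1}v\|_{L^2}\,\|\Delta_k f(u)\|_{L^\infty}\lesssim \|v\|_{L^2}\,2^{-k\alpha\delta}\|f(u)\|_{\dot B^{\alpha\delta}_{\infty,\infty}},
\]
and summing $2^{2k(s-\alpha\delta)}$ diverges since $s>\alpha\delta$. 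The same obstruction hits the remainder term. What is actually needed is an $L^p$-based Besov control: from $H^{n/2+\ep}\hookrightarrow \dot B^{ns/2}_{2/s,q}$ (any $q$, thanks to the $\ep$ of room) the H\"older composition gives $f(u)-f(0)\in \dot B^{s}_{n/s,q'}$ with norm $\lesssim\|u\|_{H^{n/2+\ep}}^{2/n}$, and \emph{this} is what closes the paraproduct bounds
\[
\|T_v f(u)\|_{\dot H^s}+\|R(f(u),v)\|_{\dot H^s}\lesssim \|v\|_{L^{2n/(n-2s)}}\,\|f(u)-f(0)\|_{\dot B^s_{n/s,r}}\lesssim \|v\|_{H^s}\,\|u\|_{H^{n/2+\ep}}^{2/n}.
\]
Once you make this correction, your paraproduct argument converges to the same Besov input the paper uses directly---so the more elementary route in the paper is arguably preferable here.

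\textbf{Minor points for $n=1$.} Your Kato--Ponce step needs $f(u)\in H^s$, but $e^{i|u|^2}\notin L^2$; you must subtract the constant (which you do mention). Also, the generic Moser estimate gives $\|F(u)\|_{H^s}\le C(\|u\|_{L^\infty})\|u\|_{H^s}$, not $\|u\|_{H^s}^2$; you need the sharper input that $|f(z_1)-f(z_2)|\lesssim(|z_1|+|z_2|)|z_1-z_2|$ for all three choices of $f$, which yields $\|f(u)-f(0)\|_{\dot H^s}\lesssim\|u\|_{L^\infty}\|u\|_{\dot H^s}$ directly via the difference characterization. With these fixes, part one is correct.
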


\begin{proof}
Let $\dot B_{p,q}^s=\dot B_{p,q}^s(\R^n)$ be the homogeneous Besov space. It is known that
\begin{align}
\label{Besov}
\|u\|_{\dot B_{p,q}^s}\sim \left(\int_0^\infty t^{-1-qs}\sup_{|y|\le t}\|\tau_yu-u\|_{L^p}^qdt\right)^{1/q}
\end{align}
as long as $0<s<1$, where $\tau_y u(\cdot)=u(\cdot-y)$ (see {\it e.g.} \cite[6.3.1 Theorem]{BeLo}). We also have for $u\in H^s$, 
\begin{align}
\label{Besov_2}
\|u\|_{\dot B^s_{2,2}}\sim \|u\|_{\dot H^s}.
\end{align}
At first, we observe that Sobolev embedding implies, for any $\ep>0$, 
\begin{align}
\label{proof_1}
\|f(u)\|_{L^\infty}\le 1+\|u\|_{L^\infty}^{\frac 2n}\lesssim 1+\|u\|_{H^{\frac n2+\ep}}^{\frac 2n},
\end{align}
and that the following well-known inequalities holds for $z_1,z_2\in \C$  (see e.g. Cazenave \cite{Cazenave}): 
\begin{align*}
|f(z_1)-f(z_2)|\lesssim \begin{cases}
\max(|z_1|,|z_2|)^{\frac 2n-1}|z_1-z_2|&\text{if $n=1$,}\\
|z_1-z_2|^{\frac2n}&\text{if $n\ge2$}.
\end{cases}
\end{align*}
Since $\tau_y[f(u)v]-f(u)v=\{f(\tau_yu)-f(u)\}v+f(\tau_yu)(\tau_yv-v)$, this inequality implies
\begin{align}
\label{proof_2}
|\tau_y[f(u)v]-f(u)v|\lesssim \begin{cases}\|u\|_{L^\infty}|\tau_yu-u||v|+\|f(u)\|_{L^\infty}|\tau_y v-v|&\text{if}\ n=1,\\
|\tau_yu-u|^{\frac 2n}|v|+\|f(u)\|_{L^\infty}|\tau_y v-v|&\text{if}\ n\ge2.\end{cases}
\end{align}

Now we first suppose $n=1$ and $1/2<s<1$. Substituting \eqref{proof_2} with $n=1$ into \eqref{Besov} and using H\"older's inequality, \eqref{Besov_2}, \eqref{proof_1}  and the embedding $H^s\hookrightarrow L^\infty$ show \eqref{lemma_lemma_nonlinear_1} as follows: 
\begin{align*}
\|f(u)v\|_{H^s}
&\lesssim \|f(u)\|_{L^\infty}\|v\|_{L^2}+\|u\|_{L^\infty}\|u\|_{\dot B_{2,2}^s}\|v\|_{L^\infty}+\|f(u)\|_{L^\infty}\|v\|_{\dot B_{2,2}^s}\\
&\lesssim (1+\|u\|_{H^s}^2)\|v\|_{H^s}. 
\end{align*}

Next, if $n=1$ and $0<s<1/2$, then since $1/s>2$ we have the continuous embeddings $H^s\hookrightarrow L^{\frac{2}{1-2s}}$ and $H^{1/2}\hookrightarrow \dot W^{s,1/s}\hookrightarrow \dot B^s_{1/s,2}$. Moreover, H\"older's inequality implies
$$
\|(\tau_yu-u)v\|_{L^2}\le \|\tau_yu-u\|_{L^{\frac 1s}}\|v\|_{L^{\frac{2}{1-2s}}}\lesssim \|\tau_yu-u\|_{L^{\frac 1s}}\|v\|_{H^s}
$$
Plugging \eqref{proof_2} into \eqref{Besov} and using this estimate, we similarly obtain as above that, for any $\ep>0$, 
\begin{align*}
\|f(u)v\|_{H^s}&\lesssim \|f(u)\|_{L^\infty}\|v\|_{L^2}+\|u\|_{L^\infty}\|u\|_{\dot B_{1/s,2}^s}\|v\|_{H^s}+\|f(u)\|_{L^\infty}\|v\|_{\dot H^s}\\&\lesssim (1+\|u\|_{H^{\frac 12+\ep}}^2)\|v\|_{H^s}. 
\end{align*}

Finally, if $n\ge2$ and $0<s<\max(\frac n2,1)$, then we similarly obtain the continuous embeddings $H^s\hookrightarrow L^{\frac{2n}{n-2s}}$ and $H^{n/2}\hookrightarrow \dot W^{s,2/s}\hookrightarrow \dot B^s_{2/s,2}$ and, by H\"older's inequality, 
$$
\||\tau_yu-u|^{\frac 2n}v\|_{L^2}\le \|\tau_yu-u\|_{L^{\frac 2s}}^{\frac 2n}\|v\|_{L^{\frac{2n}{n-2s}}}\lesssim \|\tau_yu-u\|_{L^{\frac 2s}}^{\frac 2n}\|v\|_{H^s}.
$$
Thus, we similarly obtain, for any $\ep>0$, 
\begin{align*}
\|f(u)v\|_{H^s}\lesssim \|f(u)\|_{L^\infty}\|v\|_{L^2}+\|u\|_{\dot B_{2/s,2}^s}^{\frac 2n}\|v\|_{H^s}+\|f(u)\|_{L^\infty}\|v\|_{\dot H^s}\lesssim (1+\|u\|_{H^{\frac n2+\ep}}^{\frac 2n})\|v\|_{H^s}.
\end{align*}
This completes the proof
\end{proof}

\begin{proof}[Proof of Lemma \ref{lemma_nonlinear}]
We first show \eqref{lemma_nonlinear_1}. To this end, we may assume $\lambda=1$ without loss of generality since the case $\lambda=1$ implies, for $\lambda\ge1$, 
\begin{align*}
\|e^{i\lambda|u|^{\frac2n}}u\|_{H^\gamma}
=\lambda^{-\frac2n}\|e^{i|\lambda^{\frac{n}{2}}u|^{\frac2n}}\lambda^{\frac{n}{2}}u\|_{H^\gamma}
\lesssim \big(1+\lambda^{\lceil \gamma \rceil}\|u\|_{H^\gamma}^{\frac{2\lceil \gamma \rceil}{n}}\big)\|u\|_{H^\gamma}.
\end{align*}
The proof is divided into the following four cases: (i) $n=1$ and $\frac12<\gamma<1$; (ii) $n=1$ and $\gamma=1$; (iii) $n=1$ and $\frac32<\gamma<2$; (iv) $n\ge1$ and $\max(1,\frac n2)<\gamma<\min(2,1+\frac n2,1+\frac 2n)$.

(i) If $n=1$ and $\frac12<\gamma<1$, then \eqref{lemma_nonlinear_1} directly follows from\eqref{lemma_lemma_nonlinear_1} with $v=u$. 

(ii) If $n=1$ and $\gamma=1$, then \eqref{lemma_nonlinear_1} is a direct consequence of the formula
\begin{align}
\label{proof_3}
\nabla(e^{i|u|^{\frac2n}}u)
=e^{i|u|^{\frac2n}}\left(\frac in |u|^{\frac2n}\nabla u+\frac in|u|^{\frac2n-2}u^2 \overline{\nabla u}+\nabla u\right).
\end{align}

(iii) Let $n=1$, $\frac32<\gamma<2$. Using \eqref{lemma_lemma_nonlinear_1} twice with $f(u)=e^{i|u|^{2}}$ and $v=|u|^{2}\nabla u$ in the first place and with $f(u)=|u|^{2}$ and $v=\nabla u$ in the second place shows
\begin{align*}
\|e^{i|u|^{2}}|u|^{2}\nabla u\|_{H^{\gamma-1}}
\lesssim (1+\|u\|_{H^{\gamma-1}}^2)\||u|^{2}\nabla u\|_{H^{\gamma-1}}
\lesssim (1+\|u\|_{H^{\gamma }}^4)\|u\|_{H^\gamma}.
\end{align*}
Similarly, \eqref{lemma_lemma_nonlinear_1} also implies
\begin{align*}
\|e^{i|u|^{\frac2n}}|u|^{\frac2n-2}u^2\overline{\nabla u}\|_{H^{\gamma-1}}
&\lesssim (1+\|u\|_{H^{\gamma }}^4)\|u\|_{H^\gamma},\\
\|e^{i|u|^{\frac2n}}\nabla u\|_{H^{\gamma-1}}
&\lesssim (1+\|u\|_{H^{\gamma}}^2)\|u\|_{H^\gamma}.
\end{align*}
Plugging these three estimates into \eqref{proof_3} yields \eqref{lemma_nonlinear_1}. 

(iv) Let $n\ge1$, $\max(1,\frac n2)<\gamma<\min(2,1+\frac 2n,1+\frac n2)$. Then \eqref{lemma_nonlinear_1} can  be verified by the same argument as in (iii) with \eqref{lemma_lemma_nonlinear_2} instead of \eqref{lemma_lemma_nonlinear_1}. Indeed, taking $\ep>0$ such that $\frac n2+\ep<\gamma$, we have
\begin{align*}
\|e^{i|u|^{\frac2n}}|u|^{\frac2n}\nabla u\|_{H^{\gamma-1}}
\lesssim  (1+\|u\|_{H^{\frac n2+\ep}}^{\frac 2n})\||u|^{\frac 2n}\nabla u\|_{H^{\gamma-1}} 
\lesssim (1+\|u\|_{H^{\gamma }}^{\frac 4n})\|u\|_{H^\gamma}.
\end{align*}
The remaining part is analogous and we omit it. This completes the proof of \eqref{lemma_nonlinear_1}. 

Finally, in order to prove \eqref{lemma_nonlinear_2}, we recall the following estimate: 
$$
\norm{|v|^{n/2}v}_{\dot H^\gamma}\lesssim \norm{v}_{L^\infty}^{n/2}\norm{v}_{\dot H^\gamma}
$$
(see, e.g., Ginibre-Ozawa-Velo \cite[Lemma 3.4]{GOV1994}). Let $v=e^{i\lambda |u|^{\frac2n}}u$ so that $e^{i\lambda |u|^{\frac2n}} F(u)=\nu|v|^{2/n}v$. Then this estimate and \eqref{lemma_nonlinear_1} with $\rho=0$ imply
\begin{align*}
\|e^{i\lambda |u|^{\frac2n}} F(u)\|_{H^\gamma}
&\lesssim \norm{|v|^{2/n}v}_{L^2}+\norm{|v|^{2/n}v}_{\dot H^\gamma}\\
&\lesssim \norm{v}_{L^\infty}^{2/n}\norm{v}_{H^\gamma}
\lesssim \big(1+\lambda^{\lceil \gamma \rceil}\norm{u}_{H^\gamma}^{\frac{2\lceil \gamma \rceil}{n}}\big)
\norm{u}_{L^{\infty}}^{\frac{2}{n}} \| u\|_{H^\gamma}.
\end{align*}
This completes the proof of the proposition. 
\end{proof}


\section*{Acknowledgments}
M. K. is partially supported by JSPS KAKENHI Grant Number 20K14328. H. M. is partially supported by JSPS KAKENHI Grant Number JP21K03325. 


\begin{thebibliography}{99}
\bibitem{Barab} J.~E. Barab, {\it Nonexistence of asymptotically free solutions for a nonlinear
  {S}chr\"odinger equation}, J. Math. Phys. \textbf{25} (1984), 3270--3273. 

\bibitem{Beceanu} M. Beceanu, {\it New estimates for a time-dependent Schr\"odinger equation}, Duke Math. J. \textbf{159} (2011), 351--559.

\bibitem{BeLo} J. Bergh, J.L\"ofstr\"om, {\it Interpolation spaces. An introduction}, Springer- Verlag, Berlin, 1976, Grundlehren der Mathematischen Wissenschaften, No. 223.

\bibitem{Cazenave} T. Cazenave, {\it Semilinear Schr\"odinger equations}, Courant Lecture Notes in Mathematics 10. Providence, RI: New York University, Courant Institute of Mathematical Sciences, American Mathematical Society, 2003.

\bibitem{ChePu} G. Chen, F. Pusateri, {\it The 1-dimensional nonlinear Schr\"{o}dinger equation with a weighted $L^1$ potential}, Analysis \& PDE \textbf{15} (2022) 937--982. 

\bibitem{DG} J. Derezi\'{n}ski, C. G\'{e}rard, {\it  Scattering Theory of Clinical and Quantum N-Particle Systems}, Texts and Monographs in Physics (Springer, Berlin, 1997). 

\bibitem{DeGe} J. Derezi\'{n}ski, C. G\'{e}rard, {\it Long-range scattering in the position representation}, J. Math. Phys., \textbf{38} (1997), 3925. 

\bibitem{Dollard} J. Dollard, {\it Asymptotic convergence and Coulomb interaction}, J. Math. Phys. \textbf{12} (1964), 729--738. 

\bibitem{GePuRo} P. Germain, F. Pusateri, F. Rousset, {\it The nonlinear Schr\"{o}dinger equation with a potential}, Ann. de'Inst. Henri Poincar\'{e} C, Analyse non lin\'{e}aire, \textbf{35} (2018) 1477--1530. 

\bibitem{Ginibre_Ozawa1993} J. Ginibre, T. Ozawa. {\it Long range scattering for nonlinear Schr\"odinger and Hartree equations in space dimension $n\ge2$}, Comm. Math. Phys. \textbf{151} (1993), 619--645.

\bibitem{GOV1994} J. Ginibre, T. Ozawa, G. Velo, {\it On the existence of the wave operators for a class of nonlinear Schr\"odinger equations}, Annales de l'I. H. P., Physique th\'eorique  \textbf{60} (1994), 211--239.

\bibitem{Goldberg} M. Goldberg, {\it Strichartz estimates for the Schr\"odinger equation with time-periodic $L^{n/2}$ potentials},  J. Funct. Anal. \textbf{256} (2009), 718--746

\bibitem{HaNa1998} N. Hayashi, P. I. Naumkin, {\it Asymptotics in large time of solutions to nonlinear Schr\"odinger and Hartree equations}, Amer. J. Math. \textbf{120} (1998), 369--389.

\bibitem{HaNa2006} N. Hayashi, P. I. Naumkin, {\it Domain and range of the modified wave operator for Schr\"odinger equations with a critical nonlinearity}, Commun. Math. Phys. \textbf{267} (2006), 477--492.

\bibitem{HaWaNa} N. Hayashi, H. Wang, P. I. Naumkin, {\it Modified wave operators for nonlinear Schr\"{o}dinger equations in lower order Sobolev spaces}, J. Hyperbolic Diff. Eqn. \textbf{8} (2011), 759--775.

\bibitem{Hormander} L. H\"{o}rmander, {\it The existence of wave operators in scattering theory}, Math. Z. \textbf{146} (1976), 69--91.

\bibitem{IfTa} M. Ifrim and D. Tataru, {\it Global bounds for the cubic nonlinear Schr\"odinger equation (NLS) in one space dimension}, Nonlinearity \textbf{28} (2015), 2661--2675.

\bibitem{IsoKi} H. Isozaki, H. Kitada, {\it Modified wave operators with time-dependent modifiers}, J. Fac. Sci. Univ. Tokyo, Sec1A \textbf{32} (1985), 77--104.

\bibitem{KaPu} J. Kato, F. Pusateri,  {\it A new proof of long-range scattering for critical nonlinear Schr\"odinger equations}, Differential Integral Equations \textbf{24} (2011) 923--40. 

\bibitem{LiSo} H. Lindblad, A. Soffer, {\it Scattering and small data completeness for the critical nonlinear Schr\"odinger equation},  Nonlinearity \textbf{19} (2006), 345--353. 

\bibitem{MMT} J. Marzuola, J. Metcalfe, and D. Tataru, {\it Strichartz estimates and local smoothing estimates for asymptotically flat Schr\"odinger equations}, J. Funct. Anal. \textbf{255} (2008), 1497--1553.

\bibitem{MaMi} S. Masaki, H. Miyazaki, {\it Long range scattering for nonlinear {S}chr\"{o}dinger equations with critical homogeneous nonlinearity}, SIAM J. Math. Anal. \textbf{50} (2018), 3251--3270. 

\bibitem{MMU} S. Masaki, H. Miyazaki, K. Uriya, {\it Long-range scattering for nonlinear Schr\"odinger equations with critical homogeneous nonlinearity in three space dimensions},  Trans. Amer. Math. Soc. \textbf{371} (2019), no. 11, 7925--7947.  

\bibitem{MaMuSe} S. Masaki, J. Murphy, J. Segata, {\it Modified Scattering for the One-Dimensional Cubic NLS with a Repulsive Delta Potential}, Int. Math. Res. Not. \textbf{2019} (2019) 7577--7603. 

\bibitem{Mizutani_JFA} H. Mizutani, {\it Strichartz estimates for Schr\"odinger equations with slowly decaying potentials}, J. Funct. Anal. \textbf{279} (2020) 108789.

\bibitem{INa} I. Naumkin, {\it Nonlinear Schr\"odinger equations with exceptional potentials}, J. Differ. Eqn. \textbf{265} (2018) 4575--4631. 

\bibitem{IPNa} P. I. Naumkin, {\it Sharp asymptotic behavior of solutions for cubic nonlinear Schr\"odinger equations with a potential}, J. Math. Phys. \textbf{57} (2016) 051501. 

\bibitem{NaSu} P. I. Naumkin, I. S{\'a}nchez-Su{\'a}rez, {\it On the critical nongauge invariant nonlinear {S}chr\"odinger equation}, Discrete Contin. Dyn. Syst. \textbf{30} (2011), 807--834.

\bibitem{Ozawa1991} T. Ozawa, {\it Long range scattering for nonlinear Schr\"odinger equations in one space dimension}, Commun. Math. Phys. \textbf{139} (1991), 479--493.

\bibitem{Ozawa_CVPDE} T. Ozawa, {\it Remarks on proofs of conservation laws for nonlinear Schr\"odinger equations}, Calc. Var. Partial Differential Equations \textbf{25} (2006), 403--408.

\bibitem{ReSi} M. Reed, B. Simon, {\it Methods of Modern Mathematical Physics}, II, IV Academic Press,  New York-London, 1975, 1978.

\bibitem{RoSc} I. Rodnianski, W. Schlag, {\it Time decay for solutions of Schr\"odinger equations with rough and time-dependent potentials}, Invent. Math. \textbf{155}  (2004), 451--513.

\bibitem{Se} J. Segata, {\it Final State Problem for the Cubic Nonlinear Schr\"odinger Equation with Repulsive Delta Potential}, Commun. P.D.E. \textbf{40} (2015) 309--328. 

\bibitem{Strauss} W. Strauss, {\it Nonlinear scattering theory}, Scattering theory in mathematical physics, (1974), pp. 53--78. 

\bibitem{Taira} K. Taira, {\it A remark on Strichartz estimates for Schr\"odinger equations with slowly decaying potentials}, Proc. Amer. Math. Soc. \textbf{150} (2022), 3953--3958.

\bibitem{Tsutsumi_FE} Y. Tsutsumi, {\it $L^2$-solutions for nonlinear Schr\"odinger equations and nonlinear groups}, Funkcial. Ekvac. \textbf{30} (1987), 115--125.

\bibitem{TsuYa} Y. Tsutsumi, K Yajima, {\it The asymptotic behavior of nonlinear {S}chr\"{o}dinger
  equations}, Bull. Amer. Math. Soc. (N.S.) \textbf{11} (1984), 186--188.

\bibitem{Yafaev} D. R. Yafaev, {\it Wave operators for the Schr\"{o}dinger equation}, Teoreticheskaya i Math., Fizika, \textbf{45} (1980), 224--234.  

\end{thebibliography}
\end{document}